\newtheorem{theorem}{Theorem}[section]
\newtheorem{Lemma}[theorem]{Lemma}
\newtheorem{example}[theorem]{Example}
\numberwithin{mytheorem}{subsection}
\numberwithin{mythe}{subsection}
\journal{.}
\begin{document}

\begin{frontmatter}
\title{An efficient numerical method based on exponential B-splines for time-fractional Black-Scholes equation governing European options}
\author[label1]{Anshima Singh}
\address[label1]{Department of Mathematical Sciences, Indian Institute of Technology (BHU) Varanasi, Uttar Pradesh,
India}
\ead{anshima.singh.rs.mat18@iitbhu.ac.in}
\author[label1]{Sunil Kumar}


\begin{abstract}
In this paper a time-fractional Black-Scholes model (TFBSM) is considered to study the price change of the underlying fractal transmission system. We develop and analyze a numerical method to solve the TFBSM governing European options. The numerical method combines the exponential B-spline collocation to discretize in space and a finite difference method to discretize in time. The method is shown to be unconditionally stable using von-Neumann analysis. Also, the method is proved to be convergent of order two in space and $2-\mu$ is time, where $\mu$ is order of the fractional derivative.  We implement the method on various numerical examples in order to illustrate the accuracy of the method, and validation of the theoretical findings. In addition, as an application, the method is used to price several different European options such as the European call option, European put option, and European double barrier knock-out call option.
\end{abstract}

\begin{keyword}
    Time-fractional, Black-Scholes model, European option, Exponential B-splines, Collocation method. 
\end{keyword}

\end{frontmatter}
\section{Introduction}\label{sec1:intro}
\noindent In the market of finance, investors need to minimize and control risks. By market risks, we mean the chances of the deficit because of those aspects that impact the inclusive performance of the markets. Investors can manage these types of risks by investing in significant instruments that remove the risk of price volatility. Such instruments are known as financial derivatives. The value of a financial derivative depends on the functioning of the underlying asset. Among many financial derivatives, an option is one of the most well-known and important derivatives, so pricing an option is a significant problem both in practice and in theory. In 1973, Black and Scholes \cite{black2019pricing} and Merton \cite{merton1973theory} had proposed the Black\--Scholes model, which gives an accurate delineation of the behavior of the underlying asset. Black\--Scholes model is a second-order parabolic partial differential equation with respect to stock price and time, that governs the European option value on a stock, whose price pursues the geometric Brownian motion with fixed interest rate and constant volatality. Since the Black\--Scholes model is  simple and effective to model option value,  it led to a revolution in the financial market over the past several decades. The classical Black\--Scholes model is given  by \cite{rodrigo2006alternative, bohner2009analytical}
	\begin{eqnarray}\label{eq:1}
\frac{\partial  \mathscr{V}(\xi,\tau)}{\partial \tau}+\frac{\sigma^{2}\xi^{2}}{2}\frac{\partial^{2} \mathscr{V}(\xi,\tau)}{\partial \xi^2}+(r_{f}-D_{Y})\xi\frac{\partial  \mathscr{V}(\xi,\tau)}{\partial \xi}-r_{f} \mathscr{V}(\xi,\tau)=0, \hspace{0.10cm} (\xi,\tau)\in \mathds{R}^{+} \times (0,\widetilde{T}),
	\end{eqnarray}
	with the terminal condition
	\begin{eqnarray}\label{eq:2}
V(\xi,\widetilde{T})=\max(\xi_{\widetilde{T}}-\widetilde{K},0),\hspace{0.3cm} \xi_{\widetilde{T}} \geq 0,
	\end{eqnarray}
	where $ \mathscr{V}(\xi,\tau)$ denotes the value of a European option price. Here, $\sigma(\xi,\tau)$, $r_{f}(\tau)$, $D_{Y}(\tau)$, $\widetilde{T}$, $\widetilde{K}$,  and $\tau$ represent the volatility of the returns from the holding
	stock price $\xi$, the risk free rate, the dividend yield, the expiry time, the exercise price, and the current time respectively. Several approaches have been proposed in the literature to obtain a solution to the classical Black\--Scholes model \cite{amster2002solutions,amster2003stationary,
	company2008numerical,company2009numerical,
	bohner2009analytical,cen2011robust}. Due to the unrealistic assumptions used in the Black-Scholes model, it has some drawbacks, so it cannot explain a few existing phenomena such as stock price volatility, a short time boom in the financial market.  \cite{carr2003finite}, etc.
	
Fractional integrals and fractional derivatives are non-local, so they are a useful tool to desribe memory. Fractional differential equations have become a powerful tool for studying fractal dynamics and fractal geometry. The fractional calculus has had a massive impact on financial theory, as the fractional Black Scholes model can deal with most of the shortcomings of the classical Black Scholes model. Wyss was the first researcher to introduce the time-fractional Black Scholes model for pricing a European call option \cite{wyss2000fractional}. To deal with the problem of short time jumps in the financial market, Cartea and del-Castillo-Negrete \cite{cartea2007fractional} developed the space fractional Black-Scholes model to price exotic options. Jumarie \cite{jumarie2008stock,jumarie2010derivation} applied It$\hat{o}$'s lemma and the fractional-order Taylor's series method to obtain the time-space fractional Black-Scholes model. He studied the dynamics of the stock exchange and also considered  Merton's optimal portfolio to provide new results. Liang et al. \cite{liang2010option} derived a bi-fractional Black\--Merton\--Scholes model. With growth of applications of fractional models in financial field, researchers have shown interest in solving them 
analytically \cite{chen2015analytically,prathumwan2020solution,
ghandehari2014european,edeki2017analytical,fall2019black} and numerically \cite{zhang2016numerical,de2017numerically,
song2013solution,koleva2017numerical,
golbabai2017new,hariharan2013efficient,
mesgarani2021impact,an2021space,akram2021efficient}.

 Spline functions are prevalent in mathematics, computer science, engineering, etc. \cite{GuptaKadalbajoo2016,kadalvg2010,KADALBAJOO2009439,KADALBAJOO2008271}.  The use of piecewise cubic polynomial spline interpolation often gives undesirable inflexion points. The exponential spline interpolation method is a generalization of cubic splines and avoids these inflexion points. Pruess \cite{pruess1979alternatives,pruess1976properties} showed that exponential splines can generate co-monotone and co-convex interpolants and provided the remedy to the inflexion points issue. Pruess \cite{pruess1979alternatives}, Boorm\cite{de1978practical}, and McCartin \cite{mccartin1991theory} studied the exponential splines in detail. McCartin also showed that the exponential splines accept a basis of B-splines. They are used in approximating the solutions of various classes of problems in differential equations \cite{rao2008exponential,
zhu2017exponential,asvcomputational,ravi2021unconditionally}.

 
 To the best of our knowledge, there is no result in the literature on the collocation method based on exponential B-spline functions for the time\--fractional Black\--Scholes model. So, we propose an effective collocation technique to solve the time\--fractional Black\--Scholes problem numerically. To achieve this we use exponential B-spline functions to discretize the space derivative and apply a finite difference method to discretize the Caputo fractional derivative. It is proved that the method is unconditionally stable by means of von Neumann analysis. Further, it is shown that the proposed method is convergent of order $O(h_{\varkappa}^2,h_{t}^{2-\mu})$, where $h_{t}$ and $h_{\varkappa}$ represent mesh spacing in the time
		and space directions, respectively. We perform several numerical experiments to validate the theoretical findings. We use the proposed method to price three European options governed by a TFBSM, namely the European call option, the European put option, and the European double barrier knock-out call option. In addition, we examine how the order of time-fractional derivative affects the option price.

The remaining paper is as structured follows. Section \ref{sec:tmp} describes the model problem.  Section \ref{sec:2} is devoted to the proposed numerical method. The stability and convergence of the method are discussed in Sections \ref{sec:3} and \ref{sec:4} respectively. To validate our theoretical findings, several numerical examples are considered in Section \ref{sec:5}. Finally, the paper is concluded in Section \ref{sec:6}.



\section{The model problem}\label{sec:tmp}
Consider the following time\--fractional Black\--Scholes model (TFBSM) \cite{chen2015analytically} that expresses the option price problem
	\begin{equation}\label{eq:3}
	\frac{\partial^{\mu} \mathscr{V}(\xi,\tau)}{\partial \tau^{\mu}} +\frac{\sigma^{2}\xi^{2}}{2}\frac{\partial^{2} \mathscr{V}(\xi,\tau)}{\partial \xi^2}+r_{f}\xi\frac{\partial  \mathscr{V}(\xi,\tau)}{\partial \xi}-r_{f}  \mathscr{V}(\xi,\tau)=0, \hspace{0.3cm} (\xi,\tau)\in \mathds{R}^{+} \times (0,\widetilde{T}),
		\end{equation}
	with
		\begin{equation}\label{eq:4}
	\left\{%
	\begin{array}{ll}
		\mathscr{V}(\xi,\widetilde{T})=\phi(\xi),\\	
	\mathscr{V}(0,\tau)=\widetilde{\mathscr{H}}(\tau),\\
		\mathscr{V}(\infty,\tau)=\widetilde{\mathscr{G}}(\tau),
	\end{array}%
	\right.~~~~~~~~~~~~~~~~~~~~~~~~~~~~~~~~~~~~~~~~~~~~~~~~~~~~~~~~~~~~~~~~~~~~~~~~~~~~~~~~~~
	\end{equation}
	where $0<\mu\leq 1$ and all other notations are same as defined for problem $(\ref{eq:1})$. Further, the modified Riemann\--Liouville fractional time derivative is defined by \cite{podlubny1998fractional} 
		\begin{equation}\label{eq:5}
	\displaystyle \frac{\partial^{\mu} \mathscr{V}(\xi,\tau)}{\partial \tau^{\mu}}	= \displaystyle\begin{cases} 
	\frac{1}{\Gamma(1-\mu)}\frac{d}{d\tau}\int_{\tau}^{\widetilde{T}}\frac{\mathscr{V}(\xi,\nu)-\mathscr{V}(\xi,\widetilde{T})}{(\nu-\tau)^{\mu}}d\nu, & 0<\mu<1, \\  
\frac{\partial \mathscr{V}}{\partial \tau} & \mu=1.    
	\end{cases}
	\end{equation}
When $\mu=1,$ the model ($\ref{eq:3}$)\--($\ref{eq:4}$) converts to the classical Black\--Scholes model ($\ref{eq:1}$)\--($\ref{eq:2}$).

We consider the transformation $\tau=\widetilde{T}-t$, for $0<\mu<1,$ and proceed as follows 
\begin{eqnarray*}
\frac{\partial^{\mu} \mathscr{V}(\xi,\tau)}{\partial \tau^{\mu}}&=&	\frac{1}{\Gamma(1-\mu)}\frac{d}{d\tau}\int_{\tau}^{\widetilde{T}}\frac{\mathscr{V}(\xi,\nu)-\mathscr{V}(\xi,\widetilde{T})}{(\nu-\tau)^{\mu}}d\nu~~~~~~~~~~~~{}\\
&=&\frac{-1}{\Gamma(1-\mu)} \frac{d}{dt}\int_{\widetilde{T}-t}^{\widetilde{T}}\frac{\mathscr{V}(\xi,\nu)-\mathscr{V}(\xi,\widetilde{T})}{(\nu-\widetilde{T}+t)^{\mu}}d\nu\\
~~~~~~&=&\frac{-1}{\Gamma(1-\mu)} \frac{d}{dt}\int_{0}^{t}\frac{\mathscr{V}(\xi,\widetilde{T}-\tilde{\xi})-\mathscr{V}(\xi,\widetilde{T})}{(t-\tilde{\xi})^{\mu}}d\tilde{\xi}~~(\mbox{using}~ \nu=\widetilde{T}-\tilde{\xi})\\
&=&\frac{-1}{\Gamma(1-\mu)} \frac{d}{dt}\int_{0}^{t}\frac{u(\varkappa,\tilde{\xi})-u(\varkappa,0)}{(t-\tilde{\xi})^{\mu}}d\tilde{\xi}.\\
\end{eqnarray*}
Now defining
\begin{eqnarray*}
	{}_{0}^{}D^{\mu}_{t}u(\varkappa,t)&=&\frac{1}{\Gamma(1-\mu)} \frac{d}{dt}\int_{0}^{t}\frac{u(\varkappa,\tilde{\xi})-u(\varkappa,0)}{(t-\tilde{\xi})^{\mu}}d\tilde{\xi},~~ (0<\mu<1),
\end{eqnarray*}
we have
\[\frac{\partial^{\mu} \mathscr{V}(\xi,\tau)}{\partial \tau^{\mu}}=-{}_{0}^{}D^{\mu}_{t}u(\varkappa,t).\]
 Therefore, letting $\xi=e^x$ and denoting $u(\varkappa,t)=\mathscr{V}(e^\varkappa,\widetilde{T}-t),$ we rewrite $(\ref{eq:3})$\--($\ref{eq:4}$) as follows
	\begin{equation}\label{eq:6}
	{}_{0}^{}D^{\mu}_{t}u(\varkappa,t)= \frac{\sigma^{2}}{2}\frac{\partial^{2}u(\varkappa,t)}{\partial \varkappa^2}+(r_{f}-\frac{\sigma^{2}}{2})\frac{\partial u(\varkappa,t)}{\partial x}-r_{f} u(\varkappa,t), \hspace{0.3cm} (\varkappa,t)\in \mathds{R} \times (0,\widetilde{T}),
\end{equation}
with
\begin{equation}\label{eq:7}
\left\{%
\begin{array}{ll}
u(\varkappa,0)=z(\varkappa),\\	
u(-\infty,t)=\mathscr{H}(t),\\
u(\infty,t)=\mathscr{G}(t).
\end{array}%
\right.~~~~~~~~~~~~~~~~~~~~~~~~~~~~~~~~~~~~~~~~~~~~~~~~~~~~~~~~~~~~~~~~~~~~~~~~~~~~~~~~~~
\end{equation}
The modified Riemann\--Liouville fractional time derivative operator ${}_{0}^{}D^{\mu}_{t}$ can be transformed into the Caputo fractional time derivative operator ${}_{0}^{C}D^{\mu}_{t}$ for $0<\mu \leq 1$ following \cite{zhang2016numerical}:
\begin{eqnarray*}
	{}_{0}^{}D^{\mu}_{t}u(\varkappa,t)&=&\frac{1}{\Gamma(1-\mu)} \frac{d}{dt}\int_{0}^{t}\frac{u(\varkappa,\tilde{\xi})-u(\varkappa,0)}{(t-\tilde{\xi})^{\mu}}d\tilde{\xi},\\
	&=&\frac{1}{\Gamma(1-\mu)} \frac{d}{dt}\int_{0}^{t}\frac{u(\varkappa,\tilde{\xi})}{(t-\tilde{\xi})^{\mu}}d\tilde{\xi}-\frac{1}{\Gamma(1-\mu)} \frac{d}{dt}\int_{0}^{t}\frac{u(\varkappa,0)}{(t-\tilde{\xi})^{\mu}}d\tilde{\xi},\\
	&=&\frac{1}{\Gamma(1-\mu)} \frac{d}{dt}\int_{0}^{t}\frac{u(\varkappa,\tilde{\xi})}{(t-\tilde{\xi})^{\mu}}d\tilde{\xi}-\frac{t^{-\mu}}{\Gamma{(1-\mu)}}u(\varkappa,0),\\
		&=&\frac{1}{\Gamma(1-\mu)} \int_{0}^{t}(t-\tilde{\xi})^{-\mu}\frac{\partial u(\varkappa,\tilde{\xi})}{\partial \tilde{\xi}}d\tilde{\xi},\\
			&=&{}_{0}^{C}D^{\mu}_{t}u(\varkappa,t).
\end{eqnarray*}
Lastly, to solve the problem numerically, we need to truncate the unbounded domain into a finite interval $(I_{p},F_{p})$. Also, without loss of generality we add a source term to the RHS of the equation $(\ref{eq:6})$. Thus, we have the following problem
	\begin{equation}\label{eq:8}
{}_{0}^{C}D^{\mu}_{t}u(\varkappa,t)= \alpha\frac{\partial^{2}u(\varkappa,t)}{\partial \varkappa^2}+\beta\frac{\partial u(\varkappa,t)}{\partial x}-\gamma u(\varkappa,t)+\psi(\varkappa,t), \hspace{0.3cm} (\varkappa,t)\in (I_{p},F_{p}) \times (0,\widetilde{T}),
\end{equation}
with
\begin{equation}\label{eq:9}
\left\{%
\begin{array}{ll}
u(\varkappa,0)=z(\varkappa),\\	
u(I_{p},t)=\mathscr{H}(t),\\
u(F_{p},t)=\mathscr{G}(t),
\end{array}%
\right.~~~~~~~~~~~~~~~~~~~~~~~~~~~~~~~~~~~~~~~~~~~~~~~~~~~~~~~~~~~~~~~~~~~~~~~~~~~~~~~~~~
\end{equation}
where $\alpha=\frac{\sigma^{2}}{2}>0$, $\beta=r_{f}-\frac{\sigma^{2}}{2}$, and $\gamma=r_{f}>0$. From the above fractional Black\--Scholes model, we can obtain the widely known reaction\--diffusion model by taking $\alpha>0$, $\beta=0$, and $\gamma \neq 0$, and the time fractional advection\--diffusion model by taking $\alpha>0$, $\beta<0$, and $\gamma=0$. One find several works for both the models, but from the existing literature, it seems that the work on the time\--fractional Black\--Scholes model is comparatively less and confined. Therefore, in this paper, we have considered the time\--fractional Black\--Scholes model to solve it numerically using the collocation method based on the exponential B-spline functions.

\section{Numerical Scheme }\label{sec:2}
\subsection{\textbf{ \emph{Outline of the Exponential B\--spline functions}}}\label{sub:1}
  \noindent Let $\Pi_{\varkappa}: I_{p}=\varkappa_{0}<\varkappa_{1}<\dots<\varkappa_{N_{\varkappa}-1}<\varkappa_{N_{\varkappa}}=F_{p}$ be the uniform partition of $[I_{p},F_{p}]$ with $\varkappa_{m}=I_{p}+mh_{\varkappa}$, where $m=0,1,2,\dots,N_{\varkappa},$ and $h_{\varkappa}=\frac{(F_{p}-I_{p})}{N_{\varkappa}}$ is the mesh spacing in space direction. Suppose
  \begin{eqnarray*}\tilde{s}=\sinh(\rho h_{\varkappa}), \\ \tilde{c}=\cosh(\rho h_{\varkappa}),\end{eqnarray*}
  where $\rho$ is a non\--negative tension parameter.
   A generalization of the cubic spline as an exponential spline has been proposed by McCartin \cite{mccartin1991theory}.
The presence of non\--negative tension parameter plays an important role in the exponential spline and it accommodates the stiffness of multiple spline segments. The exponential B-spline functions are defined on the above mentioned partition $\Pi_{\varkappa}$ along with six more points $\varkappa_{i}$, where $i=-3,-2,-1,N_{\varkappa}+1,N_{\varkappa}+2,N_{\varkappa}+3$, which are outside of the interval $[I_{p},F_{p}]$. The exponential B-spline functions $\mathscr{Q}_{m}(\varkappa)$ are defined as follows
\begin{equation}\label{eq:10}
\displaystyle \mathscr{Q}_{m}(\varkappa) = \begin{cases} 
\tilde{r}(\varkappa_{m-2}-\varkappa)-\frac{\tilde{r}}{\rho}\sinh(\rho(\varkappa_{m-2}-\varkappa)), & \text{$\varkappa\in [\varkappa_{m-2},\varkappa_{m-1}]$,} \\  
\tilde{a}+\tilde{b}(\varkappa_{m}-\varkappa)+\bar{c}e^{\rho(\varkappa_{m}-\varkappa)}+qe^{-\rho(\varkappa_{m}-\varkappa)}, & \text{$\varkappa\in [\varkappa_{m-1},\varkappa_{m}]$,} \\  
\tilde{a}+\tilde{b}(\varkappa-\varkappa_{m})+\bar{c}e^{\rho(\varkappa-\varkappa_{m})}+qe^{-\rho(\varkappa-\varkappa_{m})}, & \text{$\varkappa\in [\varkappa_{m},\varkappa_{m+1}]$,}\\  
\tilde{r}(\varkappa-\varkappa_{m+2})-\frac{\tilde{r}}{\rho}\sinh(\rho(\varkappa-\varkappa_{m+2})), & \text{$\varkappa\in [\varkappa_{m+1},\varkappa_{m+2}]$,} \\   
0, & \emph{otherwise},  
\end{cases}
\end{equation}
where $m = -1, 0, \dots, N_{\varkappa}, N_{\varkappa} + 1$, and
\begin{eqnarray*}
\tilde{r}=\frac{\rho}{2(\rho h_{\varkappa}\tilde{c}-\tilde{s})},~~~~~ \tilde{a}=\frac{\rho h_{\varkappa}\tilde{c}}{\rho h_{\varkappa}\tilde{c}-\tilde{s}},~~~~~~\tilde{b}=\frac{\rho}{2}\left[\frac{\tilde{c}(\tilde{c}-1)+\tilde{s}^{2}}{(\rho h_{\varkappa}\tilde{c}-\tilde{s})(1-\tilde{c})}\right],\end{eqnarray*}
\begin{equation*}
\bar{c}=\frac{1}{4}\left[\frac{e^{-\rho h_{\varkappa}}(1-\tilde{c})+\tilde{s}(e^{-\rho h_{\varkappa}}-1)}{(\rho h_{\varkappa}\tilde{c}-\tilde{s})(1-\tilde{c})}\right], ~~~~~~~~~~~~~~~~~~~~~~~~~~~~~~~~~~~~~~~~~~~~~
\end{equation*}
\begin{equation*}
q=\frac{1}{4}\left[\frac{e^{\rho h_{\varkappa}}(\tilde{c}-1)+\tilde{s}(e^{\rho h_{\varkappa}}-1)}{(\rho h_{\varkappa}\tilde{c}-\tilde{s})(1-\tilde{c})}\right]. ~~~~~~~~~~~~~~~~~~~~~~~~~~~~~~~~~~~~~~~~~~~~~~~~~
\end{equation*}
The exponential B\--spline functions $\mathscr{Q}_{-1}(\varkappa)$, $\mathscr{Q}_{0}(\varkappa)$, $\dots$, $\mathscr{Q}_{N_{\varkappa}}(\varkappa)$, and $\mathscr{Q}_{N_{\varkappa}+1}(\varkappa)$ are twice continuously differentiable over $\mathds{R}$ and have local support. The set $\{\mathscr{Q}_{m}(\varkappa)\}_{m=-1}^{N_{\varkappa}+1}$ is linearly independent and forms a basis for the exponential B\--spline space $\mathscr{W}_{N_{\varkappa}+3}=\mbox{span}(\{\mathscr{Q}_{m}(\varkappa)\}_{m=-1}^{N_{\varkappa}+1})$ over the interval $[I_{p},F_{p}]$. The values of $\mathscr{Q}_{m}(\varkappa)$, $\mathscr{Q}_{m}'(\varkappa)$, and $\mathscr{Q}_{m}''(\varkappa)$ at each mesh point are given in the Table \ref{first}.
\begin{table}
	\caption{ The values of $\mathscr{Q}_{m}(\varkappa)$, $\mathscr{Q}_{m}'(\varkappa)$, and $\mathscr{Q}_{m}''(\varkappa)$ at each mesh point.}
~~~~~~~~~~~~~~~~\begin{tabular}{ |p{2cm} p{2.5cm} p{2.5cm} p{2.5cm}|}	
	\multicolumn{4}{c}{} \\
	\hline
	&$\mathscr{Q}_{m}(\varkappa_{i})$&$\mathscr{Q}_{m}'(\varkappa_{i})$&$\mathscr{Q}_{m}''(\varkappa_{i})$\\
	\hline
	$i=m$	&1&0&$\frac{-\rho^{2}\tilde{s}}{\rho h_{\varkappa}\tilde{c}-\tilde{s}}$\\
	$i=m\pm 1$	&$\frac{\tilde{s}-\rho h_{\varkappa}}{2(\rho h_{\varkappa}\tilde{c}-\tilde{s})}$&$\frac{\mp \rho(\tilde{c}-1)}{2(\rho h_{\varkappa}\tilde{c}-\tilde{s})}$&$\frac{\rho^{2}\tilde{s}}{2(\rho h_{\varkappa}\tilde{c}-\tilde{s})}$\\
	else	&0&0&0\\	
	\hline
\end{tabular}\label{first}
\end{table}
\subsection{\textbf{\emph{ A fully discrete numerical scheme}}}\label{sub:2}
 \noindent In this sub-section, first we will discretize the Caputo time-fractional derivative and then derive the fully-discretized numerical scheme. Let  $\Pi_{t}: 0=t_{0}<t_{1}<\cdots<t_{N_{t}-1}<t_{N_{t}}=\widetilde{T}$ be the uniform partition of $[0,\widetilde{T}]$, where  $t_{n}=n h_{t}$, $n=0,1,2,\cdots,N_{t},$ with $h_{t}=\frac{\widetilde{T}}{N_{t}}$ is the mesh spacing in the time direction. We approximate the Caputo time fractional derivative ${}_{~0}^{~C}D^{\mu}_{t}u(\varkappa,t)$ at $t_{n+1}$, $n=-1,0,\cdots,N_{t}-1,$ using $L1$ method \cite{li2019numerical} as follows
\begin{eqnarray*}
^{C}_{0}D^{\mu}_{t}u(\varkappa,t_{n+1})&=& 
\frac{1}{\Gamma{(1-\mu)}}\int_{0}^{t_{n+1}}(t_{n+1}-s)^{-\mu}\frac{\partial u}{\partial s}(\varkappa,s)ds\\
&=&\frac{1}{\Gamma{(1-\mu)}}\sum_{k=0}^{n}\int_{t_{k}}^{t_{k+1}}(t_{n+1}-s)^{-\mu}\left[\frac{u(\varkappa,t_{k+1})-u(\varkappa,t_{k})}{h_{t}}\right] ds + R^{n+1}
\end{eqnarray*}
\begin{equation}\label{eq:11}
~~=\frac{h_{t} ^{-\mu}}{\Gamma{(2-\mu)}}\sum_{k=0}^{n}w_{k} \left[u(\varkappa,t_{n-k+1})-u(\varkappa,t_{n-k})\right]+ R_{1}^{n+1},
\end{equation}
where $ w_{k}= (k+1)^{1-\mu}-(k)^{1-\mu}$ and the truncation error $R_{1}^{n+1}$ is bounded by
\begin{equation}\label{eq:tr}
|R^{n+1}|\leq k_{1} h_{t} ^{2-\mu},\end{equation}
where $k_{1}$ is a constant.\\

\begin{Lemma}\label{lemo}
	The coefficients  $ w_{k}$ satisfy \cite{mohyud2018fully}\\
		\noindent $(a)$.\hspace{0.2cm} $w_{0}=1$,\\
	$(b)$.\hspace{0.2cm} $w_{k}>0,$\hspace{0.3cm}  $0\leq k \leq n$,\\
	$(c)$.\hspace{0.2cm} $<w_{k}>$ is monotonic decreasing sequence,\\
	$(d)$.\hspace{0.2cm} $	\sum_{k=0}^{n}(w_{k}-w_{k+1})+w_{n+1}=1$.
	\end{Lemma}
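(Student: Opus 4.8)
The plan is to verify the four assertions in turn, relying throughout on an exact integral (or mean-value) representation of the weights. For part $(a)$ I would simply set $k=0$ in $w_k=(k+1)^{1-\mu}-k^{1-\mu}$, using the convention $0^{1-\mu}=0$, which is legitimate because $0<1-\mu\le 1$; this gives $w_0=1-0=1$. It is also worth disposing of the degenerate case $\mu=1$ at the outset, where $w_0=1$ and $w_k=0$ for $k\ge 1$, so that the monotonicity step below, which needs $1-\mu>0$ strictly, is only applied when $0<\mu<1$.

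For $(b)$ and $(c)$ the single key observation is the identity
\[
w_k=(k+1)^{1-\mu}-k^{1-\mu}=\int_{k}^{k+1}(1-\mu)\,x^{-\mu}\,dx,
\]
where for $k=0$ the integral is improper at the origin but converges since $-\mu>-1$. The integrand $(1-\mu)x^{-\mu}$ is strictly positive on $(0,\infty)$, so each $w_k>0$, which is $(b)$. Moreover $x\mapsto(1-\mu)x^{-\mu}$ is strictly decreasing on $(0,\infty)$, so the integral of it over $[k+1,k+2]$ is strictly less than over $[k,k+1]$, i.e. $w_{k+1}<w_k$; hence $\langle w_k\rangle$ is (strictly) monotone decreasing, which is $(c)$. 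Equivalently, one may set $f(x)=(x+1)^{1-\mu}-x^{1-\mu}$ and note $f'(x)=(1-\mu)\left((x+1)^{-\mu}-x^{-\mu}\right)<0$; I would probably present the integral form since it makes positivity and monotonicity simultaneously transparent and keeps the argument self-contained.

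Part $(d)$ is pure telescoping: $\sum_{k=0}^{n}(w_k-w_{k+1})=w_0-w_{n+1}$, and adding $w_{n+1}$ to both sides together with $(a)$ gives $\sum_{k=0}^{n}(w_k-w_{k+1})+w_{n+1}=w_0=1$. There is no substantial obstacle in any of this — every step is elementary real analysis — so the only care required is bookkeeping: stating the integral representation cleanly, justifying the boundary convention $0^{1-\mu}=0$ and the convergence of the improper integral at $k=0$, and isolating the $\mu=1$ case so the strict-monotonicity argument is never invoked where $1-\mu=0$.
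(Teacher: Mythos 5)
Your proof is correct. Note that the paper itself supplies no proof of this lemma --- it simply cites the reference \cite{mohyud2018fully} --- so there is no in-paper argument to compare against; your integral representation $w_k=\int_k^{k+1}(1-\mu)x^{-\mu}\,dx$ is the standard elementary verification (equivalent to the usual mean-value/derivative argument $f'(x)=(1-\mu)\bigl((x+1)^{-\mu}-x^{-\mu}\bigr)<0$), and your handling of the endpoint case $k=0$ and of the degenerate value $\mu=1$ (where strict positivity of $w_k$ for $k\ge 1$ in fact fails, consistent with the paper restricting the genuinely fractional analysis to $0<\mu<1$) is careful and complete.
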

At time level $n+1,$ the equation (\ref{eq:8}) takes the form
	\begin{equation}\label{eq7}
{}_{0}^{C}D^{\mu}_{t}u(\varkappa,t_{n+1})= \alpha\frac{\partial^{2}u^{n+1}(\varkappa)}{\partial \varkappa^2}+\beta\frac{\partial u^{n+1}(\varkappa)}{\partial x}-\gamma u^{n+1}(\varkappa)+\psi^{n+1}(\varkappa).
\end{equation}
Now we discretize equation (\ref{eq7}). Substitution of equation (\ref{eq:11}) in equation (\ref{eq7}) gives
\begin{eqnarray}\label{eq}
	\sum_{k=0}^{n}w_{k}\left [u(\varkappa,t_{n-k+1})-u(\varkappa,t_{n-k})\right]+R^{n+1} =\alpha \widetilde{\gamma}\frac{\partial^{2}u^{n+1}(\varkappa)}{\partial \varkappa^2}+\beta \widetilde{\gamma}\frac{\partial u^{n+1}(\varkappa)}{\partial \varkappa}- \gamma \widetilde{\gamma} u^{n+1}(\varkappa)+\widetilde{\gamma}\psi^{n+1}(\varkappa),
\end{eqnarray}
~~~~~~~~~~~~~~~~~~~~~~~~~~~~~~~~~~~~~~~~~~~~~~~~~~~~~~~~~~~~~~~~~~~~~~~~~~~~~~~~~~~~~~~~~~~~~~~~~$(-1\leq n \leq N_{t}-1),$\\
where
\begin{eqnarray*}
 	\widetilde{\gamma}=\Gamma{(2-\mu)} h_{t} ^{\mu}.
\end{eqnarray*}
The initial and boundary conditions in (\ref{eq:9}) result in
\begin{equation}\label{eq:13}
\left\{%
\begin{array}{ll}
u^0(\varkappa)=z(\varkappa),\\
u^{n+1}(I_{p})=\mathscr{H}(t_{n+1}), & -1\leq n \leq N_{t}-1,\\
u^{n+1}(F_{p})=\mathscr{G}(t_{n+1}), & -1\leq n \leq N_{t}-1.
\end{array}%
\right.
\end{equation}
\noindent The approximate solution $\mathscr{U}^{n+1}(\varkappa)$ to the analytical solution $u^{n+1}(\varkappa)$ of problem (\ref{eq:8})-(\ref{eq:9}) is considered to be in the following form
\begin{equation}\label{eq:14}
 \mathscr{U}^{n+1}(\varkappa)=\sum_{m=-1}^{N_{\varkappa}+1}\mathscr{R}^{n+1}_{m}\mathscr{Q}_{m}(\varkappa),
\end{equation}
	where $\mathscr{R}^{n+1}_{m}$ are unknown coefficients that need to be determined. With the help of Table \ref{first} we can obtain $\mathscr{U}^{n+1}(\varkappa_{m})$, $\mathscr{U}^{n+1}_{\varkappa}(\varkappa_{m})$, and $\mathscr{U}^{n+1}_{\varkappa\varkappa}(\varkappa_{m}),$ for $-1\leq m \leq N_{\varkappa}+1,$ in terms of the coefficients $\mathscr{R}_{m}^{n+1}$ as follows
\begin{eqnarray}
\mathscr{U}^{n+1}(\varkappa_{m})=\eta \mathscr{R}_{m-1}^{n+1}+\mathscr{R}_{m}^{n+1}+\eta \mathscr{R}_{m+1}^{n+1},\label{eq:15}\\
\mathscr{U}^{n+1}_{\varkappa}(\varkappa_{m})=\tilde{e}(\tilde{c}-1) [\mathscr{R}_{m+1}^{n+1}- \mathscr{R}_{m-1}^{n+1}],\label{eq:16}\\
\mathscr{U}^{n+1}_{\varkappa\varkappa}(\varkappa_{m})=\bar{\eta}[ \mathscr{R}_{m-1}^{n+1}-2\mathscr{R}_{m}^{n+1}+\mathscr{R}_{m+1}^{n+1}],\label{eq:17}
\end{eqnarray}	
where
\begin{eqnarray*}
	\eta=\frac{\tilde{s}-\rho h_{\varkappa}}{2(\rho  h_{\varkappa}\tilde{c}-\tilde{s})},~~~~~~ \tilde{e}=\frac{\rho}{2(\rho h_{\varkappa}\tilde{c}-\tilde{s})},~~~~~~ \bar{\eta}=\frac{\rho^{2}\tilde{s}}{2(\rho h_{\varkappa}\tilde{c}-\tilde{s})}.
\end{eqnarray*}	
Now the discretization of (\ref{eq}) at $\varkappa=\varkappa_{m}$ gives
\begin{eqnarray*}\label{eq1}
\sum_{k=0}^{n}w_{k}\left [u(\varkappa_{m},t_{n-k+1})-u(\varkappa_{m},t_{n-k})\right]+R^{n+1} =\alpha \widetilde{\gamma}\frac{\partial^{2}u^{n+1}(\varkappa_{m})}{\partial \varkappa_{m}^2}+\beta \widetilde{\gamma}\frac{\partial u^{n+1}(\varkappa_{m})}{\partial \varkappa_{m}}- \gamma \widetilde{\gamma} u^{n+1}(\varkappa_{m})\\+\widetilde{\gamma}\psi^{n+1}(\varkappa_{m}),
\end{eqnarray*}
\begin{equation}\label{eq2}
~~~~~~~~~~~~~~~~~~~~~~~~~~~~~~~~~~~~~~~~~~~~~~~~~~~~~~~~~~~~~~~~~~~~~~~(0\leq m \leq N_{\varkappa}, -1\leq n \leq N_{t}-1),
\end{equation}
The exponential B-spline function $\mathscr{U}^{n+1}(\varkappa)$ satisfies the collocation conditions as follows
\begin{eqnarray*}
	\sum_{k=0}^{n}w_{k}\left [\mathscr{U}^{n-k+1}(\varkappa_{m})-\mathscr{U}^{n-k}(\varkappa_{m})\right] =\alpha \widetilde{\gamma}\frac{\partial^{2}\mathscr{U}^{n+1}(\varkappa_{m})}{\partial \varkappa^2}+\beta \widetilde{\gamma}\frac{\partial \mathscr{U}^{n+1}(\varkappa_{m})}{\partial x}- \gamma \widetilde{\gamma} \mathscr{U}^{n+1}(\varkappa_{m})\\+\widetilde{\gamma}\psi^{n+1}(\varkappa_{m}),
\end{eqnarray*}
\begin{equation}\label{eq3}
~~~~~~~~~~~~~~~~~~~~~~~~~~~~~~~~~~~~~~~~~~~~~~~~~~~~~~~~~~~~~~~~~~~~~~~~(0\leq m \leq N_{\varkappa}, -1\leq n \leq N_{t}-1)
\end{equation}
\begin{equation}\label{eq4}
\left\{%
\begin{array}{ll}
\mathscr{U}^0(\varkappa_{m})=z_{m},\\
\mathscr{U}^{n+1}(\varkappa_{0})=\mathscr{H}_{n+1}, & -1\leq n \leq N_{t}-1,\\
\mathscr{U}^{n+1}(\varkappa_{N_{\varkappa}})=\mathscr{G}_{n+1}, & -1\leq n \leq N_{t}-1,
\end{array}
\right.
\end{equation}
 where $z(\varkappa_{m})=z_{m}$, $\mathscr{H}(t_{n+1})=\mathscr{H}_{n+1}$, and $\mathscr{G}(t_{n+1})=\mathscr{G}_{n+1}$.
  
 Now the substitution of the equations (\ref{eq:15}), (\ref{eq:16}), and (\ref{eq:17}) in equation (\ref{eq3}), and the substitution of equation (\ref{eq:15}) in boundary conditions given in equation (\ref{eq4}) yield

\begin{eqnarray*}
	\chi_{1}\mathscr{R}_{m-1}^{n+1}+\chi_{2}\mathscr{R}_{m}^{n+1}+\chi_{3}\mathscr{R}_{m+1}^{n+1}=\eta \mathscr{R}_{m-1}^{n}+\mathscr{R}_{m}^{n}+\eta \mathscr{R}_{m+1}^{n}- \sum_{k=1}^{n}w_{k}[(\eta\mathscr{R}_{m-1}^{n-k+1}+\mathscr{R}_{m}^{n-k+1}+\eta\mathscr{R}_{m+1}^{n-k+1})-\\(\eta\mathscr{R}_{m-1}^{n-k}+\mathscr{R}_{m}^{n-k}+\eta\mathscr{R}_{m+1}^{n-k})]+\widetilde{\gamma}\psi^{n+1}(\varkappa_{m}),~~~~~
\end{eqnarray*}
\begin{equation}\label{eq:18}
\end{equation}
and
	\begin{eqnarray}
\eta\mathscr{R}_{-1}^{n+1}=\mathscr{H}_{n+1}-\mathscr{R}_{0}^{n+1}-\eta\mathscr{R}_{1}^{n+1},\label{eq:19}\\
\eta\mathscr{R}_{N_{\varkappa}+1}^{n+1}=\mathscr{G}_{n+1}-\mathscr{R}_{N_{\varkappa}}^{n+1}-\eta\mathscr{R}_{N_{\varkappa}-1}^{n+1},\label{eq:20}
\end{eqnarray}	
where
\begin{eqnarray*}
	\chi_{1}&=&\eta-\alpha \widetilde{\gamma}\bar{\eta}+\beta \widetilde{\gamma}\tilde{e}(\tilde{c}-1)+\eta \gamma \widetilde{\gamma}, 
	\\ \chi_{2}&=&1+2\alpha \widetilde{\gamma}\bar{\eta}+\gamma \widetilde{\gamma},\\
	\chi_{3}&=&\eta-\alpha \widetilde{\gamma}\bar{\eta}-\beta \widetilde{\gamma}\tilde{e}(\tilde{c}-1)+\eta \gamma \widetilde{\gamma}.
\end{eqnarray*}
From the system (\ref{eq:18}) the unknown coefficients $\mathscr{R}_{-1}^{n+1}$ and $\mathscr{R}_{N_{\varkappa}+1}^{n+1}$ can be eliminated using the equations (\ref{eq:19}) and (\ref{eq:20}) respectively. Finally, we get for each $n$ a tri-diagonal system of $(N_{\varkappa} + 1)$ equations in $(N_{\varkappa} + 1)$ unknowns which can be solved by the Thomas algorithm. We have
	\begin{equation}\label{eq:21}
S \mathscr{R}^{n+1}=Q\left(\mathscr{R}^{n}-\sum_{k=1}^{n}w_{k}(\mathscr{R}^{n-k+1}-\mathscr{R}^{n-k})\right)+B,
\end{equation}
where
\begin{equation*}
S=\begin{pmatrix}
\mathscr{E}_{1}&\mathscr{E}_{2}&0&0&. .&0&0&0&0\\
\chi_{1}&\chi_{2}&\chi_{3}&0&. .&0 &0&0&0\\
0&\chi_{1}&\chi_{2}&\chi_{3}&. .&0&0&0&0\\
& & & & ..& & &\\
0&0 &0 &0 & ..&\chi_{1}&\chi_{2}&\chi_{3} &0\\
0&0&0&0&..&0&\chi_{1} &\chi_{2}&\chi_{3}\\
0&0&0&0&..&0&0&-\mathscr{E}_{2}&\mathscr{E}_{3}\\
\end{pmatrix},
\end{equation*}
\begin{eqnarray*}
	\mathscr{R}^{n}=\begin{pmatrix}
		&\mathscr{R}_{0}^{n}&\\
		&\mathscr{R}_{1}^{n}&\\
		&		.&\\
		&		.&\\
		&		.&\\		
		&	\mathscr{R}_{M-1}^{n}	&\\
		&		\mathscr{R}_{M}^{n}&
	\end{pmatrix},
	~~~~~~~~~~~	Q=\begin{pmatrix}
		0 &0&0&0&. . . . . &0&0&0&0\\
		\eta&1&\eta&0&. . . . . &0&0&0&0\\
		
		0&\eta & 1&\eta & .....&0& 0&0 &0\\
		
		& & & & .....& & &\\
		0&0 & 0&0 & .....&\eta&1& \eta& 0&\\
		0&0&0&0&.....&0&\eta &1&\eta\\
		0&0&0&0&.....&0&0&0&0\\
	\end{pmatrix},
\end{eqnarray*}
\begin{eqnarray*}
	\mathscr{E}_{1}&=&\alpha \widetilde{\gamma}\bar{\eta}\left(2+\frac{1}{\eta}\right)-\frac{\beta \widetilde{\gamma}\tilde{e}(\tilde{c}-1)}{\eta} , 
	\\ \mathscr{E}_{2}&=&-2\beta \widetilde{\gamma}\tilde{e}(\tilde{c}-1),\\
	\mathscr{E}_{3}&=&\alpha \widetilde{\gamma}\bar{\eta}\left(2+\frac{1}{\eta}\right)+\frac{\beta \widetilde{\gamma}\tilde{e}(\tilde{c}-1)}{\eta},
\end{eqnarray*}
and
\begin{equation*}
B=\begin{pmatrix}
&\mathscr{H}_{n}-\sum_{k=1}^{n}w_{k}(\mathscr{H}_{n-k+1}-\mathscr{H}_{n-k})-\frac{\chi_{1}}{\eta}\mathscr{H}_{n+1}+\widetilde{\gamma}\psi^{n+1}(\varkappa_0)&\\
&\widetilde{\gamma}\psi^{n+1}(\varkappa_1)&\\
&		\widetilde{\gamma}\psi^{n+1}(\varkappa_2)&\\
&		.&\\
&		.&\\
&		.&\\
&\widetilde{\gamma}\psi^{n+1}(\varkappa_{N_{\varkappa}-1})&\\
&\mathscr{G}_{n}-\sum_{k=1}^{n}w_{k}(\mathscr{G}_{n-k+1}-\mathscr{G}_{n-k})-\frac{\chi_{3}}{\eta}\mathscr{G}_{n+1}+\widetilde{\gamma}\psi^{n+1}(\varkappa_{N_{\varkappa}})&\\
\end{pmatrix}.
\end{equation*}
We observe that the set of systems in (\ref{eq:21}) for $n = 0,1,\cdots,N_{t},$ can be solved recursively if we know the initial vector $\mathscr{R}^{0}$. Note that $\mathscr{R}_{-1}^{0}$ and $\mathscr{R}_{N_{\varkappa}+1}^{0}$ can be then calculated using equations (\ref{eq:19}) and (\ref{eq:20}) respectively.
\subsection{\emph{\textbf{Initial state }}}\label{sub:4}
		\noindent To start the process, an appropriate initial vector $\mathscr{R}^{0}$ is needed for the system. For this, we consider the initial conditions in (\ref{eq4}),
	\begin{eqnarray*}
	\mathscr{U}^{0}_{\varkappa}(I_{p})=z'(I_{p}), ~~~~~~~~\mathscr{U}^{0}_{\varkappa}(F_{p})=z'(F_{p}).
	\end{eqnarray*}	
Now substituting the relation (\ref{eq:16}) in the above two equations we get	
	\begin{eqnarray}
	\mathscr{U}^{0}_{\varkappa}(\varkappa_{0})=\mathscr{U}^{0}_{\varkappa}(I_{p})=\tilde{e}(\tilde{c}-1) [\mathscr{R}_{1}^{0}- \mathscr{R}_{-1}^{0}] =z'(I_{p}),\label{eq:22}\\
	\mathscr{U}^{0}_{\varkappa}(\varkappa_{N_{\varkappa}})=\mathscr{U}^{0}_{\varkappa}(F_{p})=\tilde{e}(\tilde{c}-1) [\mathscr{R}_{N_{\varkappa}+1}^{0}- \mathscr{R}_{N_{\varkappa}-1}^{0}] =z'(F_{p}).\label{eq:23}
	\end{eqnarray}	
	Further, the relation (\ref{eq:15}) with the initial condition in (\ref{eq4}), yields an algebraic system of $(N_{\varkappa}+1)$ equations
	\begin{eqnarray}\label{eq:24}
\mathscr{U}^{0}(\varkappa_{m})=\eta \mathscr{R}_{m-1}^{0}+\mathscr{R}_{m}^{0}+\eta \mathscr{R}_{m+1}^{0}=z_{m},~~~~m=0,1,\cdots,N_{\varkappa},
	\end{eqnarray}	
	with unknowns $\mathscr{R}_{-1}^{0},\mathscr{R}_{0}^{0},\mathscr{R}_{1}^{0},\dots,\mathscr{R}_{N_{\varkappa}-1}^{0},\mathscr{R}_{N_{\varkappa}}^{0},\mathscr{R}_{N_{\varkappa}+1}^{0}$. Here,
	$\mathscr{R}_{-1}^{0}$ and $\mathscr{R}_{N_{\varkappa}+1}^{0}$ can be removed using equations (\ref{eq:22}) and (\ref{eq:23}) respectively. Thus, we get a tridiagonal system of size $(N_{\varkappa}+1) \times (N_{\varkappa}+1)$ which can also be solved using the Thomas algorithm. We have
	\begin{equation}\label{eq:25}
	T\mathscr{R}^{0}=\mathscr{C},
	\end{equation}
	where
	\begin{eqnarray*}
		T=\begin{pmatrix}
			1 &2\eta&0&0&. . . . . &0&0&0&0\\
		\eta&1&\eta&0&. . . . . &0&0&0&0\\
			
			0&\eta & 1&\eta & .....&0& 0&0 &0\\
			
			& & & & .....& & &\\
			0&0 & 0&0 & .....&\eta&1& \eta& 0&\\
			0&0&0&0&.....&0&\eta &1&\eta\\
			0&0&0&0&.....&0&0&2\eta&1\\
		\end{pmatrix},~~~~~~~~~~~~~~~~~~\\ 
		\\
		\\
		\mathscr{R}^{0}=\begin{pmatrix}
			&\mathscr{R}_{0}^{0}&\\
			&\mathscr{R}_{1}^{0}&\\
			&		.&\\
			&		.&\\
			&		.&\\		
			&	\mathscr{R}_{N_{\varkappa}-1}^{0}	&\\
			&		\mathscr{R}_{N_{\varkappa}}^{0}&
		\end{pmatrix},
		~~~~~~~~~~	\mathscr{C}=\begin{pmatrix}
			&z_{0}+\frac{\eta}{\tilde{e}(\tilde{c}-1)}z'(I_{p})&\\
			&	z_{1}&\\
			&		.&\\
			&		.&\\
			&		.&\\		
			&		z_{N_{\varkappa}-1}	&\\
			&			z_{N_{\varkappa}}-\frac{\eta}{\tilde{e}(\tilde{c}-1)}z'(F_{p})&
\end{pmatrix}.
\end{eqnarray*}

\section{Stability analysis}\label{sec:3}	
In this section we discuss the stability analysis of the proposed scheme.
\begin{theorem}\label{4:1thm:1}
	\noindent The numerical scheme (\ref{eq:18}) solving the TFBSM (\ref{eq:8})-(\ref{eq:9}) is unconditionally stable. 
\end{theorem}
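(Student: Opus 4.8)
The plan is to run a von~Neumann (discrete Fourier) stability analysis on the homogeneous part of the scheme (\ref{eq:18}), dropping the forcing term $\widetilde{\gamma}\psi^{n+1}(\varkappa_m)$ since stability concerns the growth of perturbations. First I would insert the Fourier mode $\mathscr{R}_m^n=\zeta_n e^{\mathrm{i}\varphi m}$ into (\ref{eq:18}) and cancel the common factor $e^{\mathrm{i}\varphi m}$. Every symmetric three-term bracket $\eta\mathscr{R}_{m-1}^{\bullet}+\mathscr{R}_m^{\bullet}+\eta\mathscr{R}_{m+1}^{\bullet}$ collapses to $B\,\zeta_\bullet$ with $B=1+2\eta\cos\varphi$, while the left-hand stencil collapses to $A\,\zeta_{n+1}$ with $A=\chi_1 e^{-\mathrm{i}\varphi}+\chi_2+\chi_3 e^{\mathrm{i}\varphi}$. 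The scheme thus reduces to the scalar convolution recursion
\[
A\,\zeta_{n+1}=B\Bigl(\zeta_n-\sum_{k=1}^{n}w_k(\zeta_{n-k+1}-\zeta_{n-k})\Bigr).
\]

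The second step is to bound the spline-derived constants. Writing $\tilde s=\sinh(\rho h_\varkappa)$, $\tilde c=\cosh(\rho h_\varkappa)$ and using the elementary inequalities $\sinh x>x$, $x\cosh x>\sinh x$ and $2\sinh x<x(1+\cosh x)$ for $x>0$ (with the $\rho=0$ cubic-spline case recovered by continuity), I would establish $0<\eta<\tfrac12$, $\bar\eta>0$, and $\widetilde{\gamma}=\Gamma(2-\mu)h_t^{\mu}>0$; hence $B\ge 1-2\eta>0$. Next, splitting $\chi_1=p+q$, $\chi_3=p-q$ with $p=\eta-\alpha\widetilde{\gamma}\bar\eta+\eta\gamma\widetilde{\gamma}$ and $q=\beta\widetilde{\gamma}\tilde e(\tilde c-1)$, a short computation gives $\mathrm{Re}\,A=\chi_2+2p\cos\varphi=B(1+\gamma\widetilde{\gamma})+4\alpha\widetilde{\gamma}\bar\eta\sin^2(\varphi/2)$, which is $\ge B$ because $\alpha,\gamma,\widetilde{\gamma},\bar\eta\ge0$. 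Therefore $|A|\ge\mathrm{Re}\,A\ge B>0$, so the amplification factor $\lambda:=A/B$ satisfies $|\lambda|\ge1$ for every phase $\varphi$, uniformly in $h_t$ and $h_\varkappa$.

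The third step exploits the coefficient structure of the $L1$ weights. Regrouping the reduced recursion $\lambda\zeta_{n+1}=\zeta_n-\sum_{k=1}^{n}w_k(\zeta_{n-k+1}-\zeta_{n-k})$ by time level and using $w_0=1$ gives, for $n\ge1$,
\[
\lambda\,\zeta_{n+1}=(w_0-w_1)\zeta_n+\sum_{k=1}^{n-1}(w_k-w_{k+1})\zeta_{n-k}+w_n\zeta_0,
\]
while for $n=0$ one simply has $\lambda\zeta_1=\zeta_0$. By Lemma~\ref{lemo} the monotonicity of $\langle w_k\rangle$ makes every coefficient nonnegative, and they telescope to $w_0=1$. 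An induction on $n$ then shows $|\zeta_n|\le|\zeta_0|$ for all $n$: granting it for levels $\le n$, the triangle inequality applied to the displayed convex combination gives $|\lambda|\,|\zeta_{n+1}|\le|\zeta_0|$, and $|\lambda|\ge1$ closes the step. Since the Fourier amplitude never grows and the bound is independent of the mesh parameters, the scheme (\ref{eq:18}) is unconditionally stable.

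I expect the main obstacle to be the second step: extracting the correct signs and bounds for $\eta$, $\bar\eta$ and $\tilde e(\tilde c-1)$ from the hyperbolic-function definitions, and then reorganizing $\mathrm{Re}\,A$ into the transparent form $B(1+\gamma\widetilde{\gamma})+4\alpha\widetilde{\gamma}\bar\eta\sin^2(\varphi/2)$ so that $|\lambda|\ge1$ falls out cleanly (together with checking that the $\rho\to0$ limit is harmless). Once those estimates are in hand, the remaining induction on the $L1$-weight convex combination is routine.
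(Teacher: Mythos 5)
Your proposal is correct and follows essentially the same route as the paper: a von Neumann substitution reducing the scheme to a scalar recursion with amplification factor $\Upsilon_1/(\Upsilon_1+\Upsilon_2+\Upsilon_3-i\Upsilon_4)$ (your $B/A$), a modulus bound $\le 1$, and then the same induction using the monotonicity and telescoping of the $L1$ weights from Lemma~\ref{lemo}. The only difference is that you spell out the sign estimates $0<\eta<\tfrac12$, $\bar\eta>0$ that justify $|\Upsilon_1|\le|\Upsilon_1+\Upsilon_2+\Upsilon_3-i\Upsilon_4|$, which the paper asserts without proof.
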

\begin{proof}
	Let $\bar{\mathscr{R}}$ be a perturbed solution of the system (\ref{eq:18}). We will investigate how
	the perturbation $\delta_{m}^{n}=	\mathscr{R}_{m}^{n}-\bar{\mathscr{R}}_{m}^{n}$ develops over time. Note that  $\delta_{m}^{n}$ solves the following equation
\begin{eqnarray*}
	\chi_{1}\delta_{m-1}^{n+1}+\chi_{2}\delta_{m}^{n+1}+\chi_{3}\delta_{m+1}^{n+1}=\eta \delta_{m-1}^{n}+\delta_{m}^{n}+\eta \delta_{m+1}^{n}- \sum_{k=1}^{n}w_{k}\left[(\eta\delta_{m-1}^{n-k+1}+\delta_{m}^{n-k+1}+\eta\delta_{m+1}^{n-k+1})\right.\\-(\eta\delta_{m-1}^{n-k}+\delta_{m}^{n-k}+\eta\delta_{m+1}^{n-k})\big].~~~~~~~~~~~~~~~~~~~~~~~~~~~~~~~~~~~~~~~~~
\end{eqnarray*}
	\begin{equation}\label{eq:26}
	\end{equation}
	Now, to use the von Neumann stability analysis we assume that \begin{equation}\label{eq:27}
	\delta_{m}^{n}=\zeta^{n}e^{i\omega mh_{\varkappa}},
	\end{equation}
	where $i=\sqrt {-1}$ and $\omega$ is the wave number. Inserting the equation (\ref{eq:27}) in (\ref{eq:26}) yields
	\begin{equation}\label{eq:28}
	\zeta^{n+1}=\frac{\Upsilon_{1}}{(\Upsilon_{1}+\Upsilon_{2}+\Upsilon_{3}-i \Upsilon_{4})} \left[\zeta^{n}-\sum_{k=1}^{n} w_{k}(\zeta^{n-k+1}-\zeta^{n-k})\right],
	\end{equation}
	where
	\begin{equation*}
	\Upsilon_{1}=1+2\eta \cos\omega h_{\varkappa}, \hspace{0.9cm} 	\Upsilon_{2}=2\alpha \widetilde{\gamma} \bar{\eta}(1-\cos\omega h_{\varkappa}),	
	\end{equation*}
	\begin{equation*}
	\Upsilon_{3}=\gamma \widetilde{\gamma} (1+2\eta \cos\omega h_{\varkappa}), \hspace{0.5cm} \mbox{and } \hspace{0.5cm}	\Upsilon_{4}=2\beta \widetilde{\gamma} \tilde{e}(\tilde{c}-1)\sin\omega h_{\varkappa}.	
	\end{equation*}
	From equation (\ref{eq:28}), we have
		\begin{equation}\label{eq:29}
	|\zeta^{n+1}|^2 \leq \frac{|\Upsilon_{1}|^2}{((\Upsilon_{1}+\Upsilon_{2}+\Upsilon_{3})^{2}+ \Upsilon_{4}^2)} \left|\left[\zeta^{n}-\sum_{k=1}^{n} w_{k}(\zeta^{n-k+1}-\zeta^{n-k})\right]\right|^2.
\end{equation}
Since $|\Upsilon_{1}|^2 < (\Upsilon_{1}+\Upsilon_{2}+\Upsilon_{3})^{2}+ \Upsilon_{4}^2$, the equation (\ref{eq:29}) takes the form
\begin{equation}\label{ext}
|\zeta^{n+1}| \leq \left|\left[\zeta^{n}-\sum_{k=1}^{n} w_{k}(\zeta^{n-k+1}-\zeta^{n-k})\right]\right|.
\end{equation}
To show $|\zeta^{n+1}|\leq |\zeta^{0}|$, the mathematical induction is used. For $n=0$ in equation (\ref{ext}) we have $ |\zeta^{1}|\leq|\zeta^{0}|$.  Further, we assume that
\begin{eqnarray}\label{30}
|\zeta^{j}|\leq |\zeta^{0}|,\hspace{0.5cm} j=1,2,\dots,n.
\end{eqnarray} 
Now from the equation (\ref{ext}) and using the assumptions in (\ref{30}) and Lemma \ref{lemo} we have

\begin{equation*}
|\zeta^{n+1}| \leq 	\left|\left[w_{n}\zeta^{0}+\sum_{k=0}^{n-1} (w_{k}-w_{k+1})\zeta^{n-k}\right]\right| \leq \left[w_{n}+\sum_{k=0}^{n-1} (w_{k}-w_{k+1})\right]|\zeta^{0}|=|\zeta^{0}|.
\end{equation*}	
Hence, the inequality (\ref{30}) holds for all $n$. Thus, we observe that the perturbation is bounded
at each time level and therefore the numerical scheme (\ref{eq:18})  solving the TFBSM (\ref{eq:8})\--(\ref{eq:9}) is unconditionally stable.
This completes the proof.
\end{proof}

	\section{Convergence analysis}\label{sec:4}
We now discuss the convergence analysis of the numerical scheme (\ref{eq:18}). We shall use the following results.
\begin{Lemma}\label{4:2:lem:1}
	The basis elements $\{\mathscr{Q}_{m}(\varkappa)\}_{m=-1}^{N_{\varkappa}+1}$ of the exponential B-spline space $\mathscr{W}_{N_{\varkappa}+3}$ satisfy the following inequality
	\begin{equation*}
	\sum_{m=-1}^{N_{\varkappa}+1}|\mathscr{Q}_{m}(\varkappa)| \leq \frac{5}{2}, \hspace{1cm} I_{p} \leq \varkappa \leq F_{p}.
	\end{equation*}
\end{Lemma}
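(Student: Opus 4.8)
The plan is to reduce the estimate to a single mesh subinterval and then bound the (at most four) basis functions that do not vanish there, each one separately.

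\textbf{Step 1 (localization and nonnegativity).} Since $\mathscr{Q}_m$ vanishes outside $[\varkappa_{m-2},\varkappa_{m+2}]$, on a fixed subinterval $[\varkappa_j,\varkappa_{j+1}]$ with $0\le j\le N_\varkappa-1$ only $\mathscr{Q}_{j-1},\mathscr{Q}_j,\mathscr{Q}_{j+1},\mathscr{Q}_{j+2}$ are not identically zero, so it suffices to bound their sum on that subinterval, uniformly in $j$, and to take the union over $j$. First I would record that $\mathscr{Q}_m\ge 0$ everywhere, so that $|\mathscr{Q}_m|=\mathscr{Q}_m$: on the two ``tail'' pieces $\mathscr{Q}_m$ equals $\frac{\tilde r}{\rho}\bigl(\sinh(\rho\delta)-\rho\delta\bigr)$ with $\delta\ge 0$ and $\tilde r=\rho/\bigl(2(\rho h_\varkappa\tilde c-\tilde s)\bigr)>0$ (the denominator being positive by the elementary fact $x\cosh x>\sinh x$ for $x>0$), hence $\ge 0$; for the central pieces nonnegativity follows from the sign pattern $\tilde a>1$, $\tilde b<0$, $\bar c>0$, $q<0$ read off using $\tilde c^2-\tilde s^2=1$, or it can simply be quoted from McCartin's development of exponential B-splines \cite{mccartin1991theory}.

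\textbf{Step 2 (the two central splines).} On $[\varkappa_j,\varkappa_{j+1}]$ the splines $\mathscr{Q}_j$ and $\mathscr{Q}_{j+1}$ are the pieces abutting the nodes $\varkappa_j$ and $\varkappa_{j+1}$; I would show each is bounded by $1=\mathscr{Q}_m(\varkappa_m)$. Writing such a piece as $f(v)=\tilde a+\tilde bv+\bar ce^{\rho v}+qe^{-\rho v}$ for $v\in[0,h_\varkappa]$, Table~\ref{first} gives $f(0)=1$, $f'(0)=0$ and $f(h_\varkappa)=\eta<1$, while $f''(v)=\rho^2\bigl(\bar ce^{\rho v}+qe^{-\rho v}\bigr)$ is increasing in $v$ (as $\bar c>0>q$) and negative at $v=0$ (since $\bar c+q=-\tilde s/(\rho h_\varkappa\tilde c-\tilde s)<0$). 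Hence $f$ is concave on an initial stretch, where it decreases from $1$, and convex afterwards between values both $\le 1$, so $f\le 1$ on $[0,h_\varkappa]$; by symmetry $\mathscr{Q}_j(\varkappa)\le 1$ and $\mathscr{Q}_{j+1}(\varkappa)\le 1$ on the whole subinterval.

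\textbf{Step 3 (the two outer splines and the constant $\tfrac14$).} On $[\varkappa_j,\varkappa_{j+1}]$ the functions $\mathscr{Q}_{j-1}$ and $\mathscr{Q}_{j+2}$ coincide with tail pieces $\frac{\tilde r}{\rho}\bigl(\sinh(\rho\delta)-\rho\delta\bigr)$ for $\delta=\varkappa_{j+1}-\varkappa$ and $\delta=\varkappa-\varkappa_j$ respectively; each is monotone in $\delta$, hence bounded by its value at $\delta=h_\varkappa$, namely $\eta=\dfrac{\tilde s-\rho h_\varkappa}{2(\rho h_\varkappa\tilde c-\tilde s)}$. It then remains to prove the scalar inequality $\eta\le\tfrac14$, equivalently $3\sinh x\le x(\cosh x+2)$ for $x=\rho h_\varkappa>0$: with $\varphi(x)=x\cosh x+2x-3\sinh x$ one has $\varphi(0)=\varphi'(0)=0$ and $\varphi''(x)=x\cosh x-\sinh x>0$, so $\varphi\ge 0$.

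\textbf{Step 4 (conclusion).} Adding the four bounds, for every $\varkappa\in[\varkappa_j,\varkappa_{j+1}]$,
\[
\sum_{m=-1}^{N_\varkappa+1}|\mathscr{Q}_m(\varkappa)|=\mathscr{Q}_{j-1}(\varkappa)+\mathscr{Q}_j(\varkappa)+\mathscr{Q}_{j+1}(\varkappa)+\mathscr{Q}_{j+2}(\varkappa)\le\eta+1+1+\eta\le 2+\tfrac12=\tfrac52,
\]
and since $j$ was arbitrary the estimate holds on all of $[I_p,F_p]$. I expect the main obstacle to be Step 2, namely ruling out an overshoot of the central exponential pieces above the nodal value $1$, together with the sign bookkeeping in Step 1; both are substantially shortened by invoking the standard nonnegativity and peak-value properties of exponential B-splines, after which only the scalar inequality $\eta\le\tfrac14$ of Step 3 needs to be checked by hand.
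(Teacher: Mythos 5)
Your argument is correct, but it is genuinely different from what the paper does: the paper gives no proof of Lemma \ref{4:2:lem:1} at all and simply points to \cite[Lemma~4.1]{rao2008exponential}. Your self-contained route --- localize to a single mesh subinterval, note that only $\mathscr{Q}_{j-1},\mathscr{Q}_j,\mathscr{Q}_{j+1},\mathscr{Q}_{j+2}$ survive there, bound the two central pieces by the nodal value $1$ and the two tail pieces by $\eta$, and close with the scalar inequality $\eta\le\tfrac14$, i.e.\ $3\sinh x\le x(\cosh x+2)$ --- is presumably the content of the cited lemma, and the individual estimates all check out: $\tilde r>0$ since $x\cosh x>\sinh x$; $\bar c>0>q$ and $\bar c+q=-\tilde s/(\rho h_\varkappa\tilde c-\tilde s)<0$ by direct computation (consistent with the value of $\mathscr{Q}_m''(\varkappa_m)$ in Table~\ref{first}); the tail pieces increase in $\delta$ up to $\eta$; and $\varphi(x)=x\cosh x+2x-3\sinh x$ has $\varphi(0)=\varphi'(0)=0$, $\varphi''>0$. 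What your write-up buys is independence from an external reference; what the citation buys the authors is brevity.

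The one soft spot is the nonnegativity of the central pieces in Step~1. The sign pattern $\tilde a>1$, $\tilde b<0$, $\bar c>0$, $q<0$ does not by itself exclude $f(v)=\tilde a+\tilde bv+\bar ce^{\rho v}+qe^{-\rho v}$ dipping below zero (the linear term has the unfavourable sign), so as written that clause is an assertion rather than a proof. It is repaired with data you already use: Table~\ref{first} gives $f'(h_\varkappa)=-\rho(\tilde c-1)/\bigl(2(\rho h_\varkappa\tilde c-\tilde s)\bigr)<0$, and since $f'$ starts at $f'(0)=0$, decreases while $f''<0$ and then increases to a negative value, one gets $f'\le 0$ on all of $[0,h_\varkappa]$; hence $f$ decreases monotonically from $1$ to $\eta>0$, which yields $0<\eta\le f\le 1$ in one stroke and also supersedes the concave/convex case analysis of Step~2. (Quoting nonnegativity from \cite{mccartin1991theory}, as you offer, is also acceptable.) With that patch the final estimate $1+1+\eta+\eta\le 2+\tfrac12=\tfrac52$ stands.
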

\begin{proof}
	See \cite[Lemma 4.1]{rao2008exponential} for the proof.
\end{proof}
\begin{theorem}\label{4:2:thm:1}
	Let the exact solution $u^{n+1}(\varkappa)$ of problem (\ref{eq:8})-(\ref{eq:9}) be interpolated by a unique exponential B-spline $\widetilde{V}^{n+1}(\varkappa)$ in $\mathscr{W}_{N_{\varkappa}+3}$. If $u\in C^{4,0}[0,1]$ and $\psi\in C^{2,0}[0,1],$ then there exists positive constants $c_{i}$ such that
	\begin{equation*}
	\left\vert\left\vert\frac{\partial^{i}}{\partial \varkappa^{i}}\left(u^{n+1}(\varkappa)-\widetilde{V}^{n+1}(\varkappa)\right)\right\vert\right\vert_{\infty} \leq c_{i} h_{\varkappa}^{4-i}, \hspace{0.6cm} i=0,1,2.
	\end{equation*}
\end{theorem}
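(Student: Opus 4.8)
The plan is to establish the standard exponential-spline interpolation error bound by the classical route: construct the interpolant explicitly, write down the error functional, and estimate it via Peano-kernel / Taylor-expansion arguments, exploiting the fact that exponential splines reproduce a suitable four-dimensional space of functions. Throughout, $n+1$ is fixed, so $u^{n+1}(\varkappa)$ is just a function of the single variable $\varkappa$ on $[I_p,F_p]$ (the normalization to $[0,1]$ in the statement is only cosmetic), and the tension parameter $\rho$ and the mesh width $h_\varkappa$ are held fixed.

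First I would recall that the space $\mathscr{W}_{N_\varkappa+3}$ spanned by $\{\mathscr{Q}_m\}$ contains, on each subinterval, the functions $1,\varkappa,e^{\rho\varkappa},e^{-\rho\varkappa}$; hence the local interpolation operator is exact on this four-dimensional space, in particular on all polynomials of degree $\le 1$ and on $e^{\pm\rho\varkappa}$. The unique interpolant $\widetilde V^{n+1}$ is determined by the nodal values $u^{n+1}(\varkappa_m)$ together with the two derivative end-conditions (as in the initial-state construction, equations (\ref{eq:22})--(\ref{eq:25})), so its coefficients solve a diagonally dominant tridiagonal system whose inverse is bounded uniformly in $N_\varkappa$; this gives $\|\mathscr{R}^{n+1}\|_\infty \le C\|u^{n+1}\|_\infty$ and, combined with Lemma \ref{4:2:lem:1}, the uniform stability $\|\widetilde V^{n+1}\|_\infty \le C\|u^{n+1}\|_\infty$ of the interpolation operator.

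Next I would prove the $i=0$ bound. On a generic interval $[\varkappa_{j-1},\varkappa_j]$, write $e(\varkappa)=u^{n+1}(\varkappa)-\widetilde V^{n+1}(\varkappa)$; since the local interpolant reproduces the span of $\{1,\varkappa,e^{\rho\varkappa},e^{-\rho\varkappa}\}$ and $e$ vanishes at both endpoints of the interval (and matches derivative data only at the global ends), a Rolle/Peano-kernel argument shows $e(\varkappa) = \int_{I_p}^{F_p} G(\varkappa,s)\, \mathcal{L}u^{n+1}(s)\,ds$ where $\mathcal{L}=\frac{d^2}{d\varkappa^2}-\rho^2$ is the differential operator annihilating the local basis and $G$ is the associated (spline) Green's function, piecewise smooth with $\|G(\varkappa,\cdot)\|_{1}\le C h_\varkappa^{3}$ uniformly. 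Using $\mathcal{L}u^{n+1}=u^{n+1\,\prime\prime}-\rho^2 u^{n+1}$ and $u^{n+1}\in C^{4}$, one more integration by parts (or a direct Taylor expansion of $u^{n+1}$ about $\varkappa_{j}$ keeping terms through fourth order and using exactness on the reproduced space) yields $\|e\|_\infty \le c_0 h_\varkappa^{4}$, with $c_0$ depending on $\|u^{n+1}\|_{C^4}$ and on $\rho$. The bounds for $i=1,2$ then follow by differentiating the Green's-function representation (equivalently, by Markov-type inverse inequalities on each polynomial-like piece, since $e''$ is again expressible through $G$ and $\mathcal{L}u^{n+1}$), losing one power of $h_\varkappa$ each time and giving $\|e'\|_\infty\le c_1 h_\varkappa^3$ and $\|e''\|_\infty\le c_2 h_\varkappa^2$.

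The main obstacle I anticipate is the careful handling of the two extra end-conditions: because $\widetilde V^{n+1}$ is pinned down by derivative data at $\varkappa_0$ and $\varkappa_{N_\varkappa}$ rather than being a pure nodal interpolant, the error functional does not vanish at the endpoints in the naive way, and one must verify that the tridiagonal coefficient system stays uniformly invertible (diagonal dominance with the correction rows $1,2\eta$ at the top and $2\eta,1$ at the bottom, cf.\ the matrix $T$ in (\ref{eq:25})) so that the Green's-function $L^1$-norm estimate holds with a constant independent of $N_\varkappa$. The cleanest way around this is to cite the detailed exponential-spline interpolation estimates already in the literature — in particular the analysis underlying Lemma \ref{4:2:lem:1} from \cite{rao2008exponential} and McCartin's development \cite{mccartin1991theory} — and to present the proof as an adaptation of those arguments to the present end-conditions, since the algebra of the constants $\tilde r,\tilde a,\tilde b,\bar c,q$ is routine once the structural facts (reproduction of $\mathrm{span}\{1,\varkappa,e^{\pm\rho\varkappa}\}$, uniform diagonal dominance, locality of support) are in hand.
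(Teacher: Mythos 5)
The paper does not actually prove this theorem: its entire ``proof'' is the single line ``See \cite{pruess1976properties} for the proof.'' So there is no in-paper argument to compare yours against; what you have written is a reconstruction of the standard tension-spline interpolation estimate that the cited reference contains, and your concluding suggestion --- present the result as an adaptation of \cite{pruess1976properties} and \cite{mccartin1991theory} --- is in effect exactly what the authors did, only more tersely. Your sketch supplies the ingredients the paper leaves implicit (local reproduction of the four-dimensional kernel space, uniform invertibility of the diagonally dominant tridiagonal interpolation system, a Peano-kernel representation of the error, and the loss of one power of $h_\varkappa$ per derivative), which is genuinely more informative than the bare citation.

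Two technical points in your sketch deserve correction before it could stand as a proof. First, the operator annihilating the local basis $\{1,\varkappa,e^{\rho\varkappa},e^{-\rho\varkappa}\}$ is the \emph{fourth-order} operator $\mathcal{L}=\frac{d^{2}}{d\varkappa^{2}}\left(\frac{d^{2}}{d\varkappa^{2}}-\rho^{2}\right)$, not the second-order $\frac{d^{2}}{d\varkappa^{2}}-\rho^{2}$ you name (which does not kill $1$ or $\varkappa$). With the second-order operator your representation $e=\int G(\varkappa,s)\,\mathcal{L}u(s)\,ds$ together with $\|G(\varkappa,\cdot)\|_{1}\le Ch_\varkappa^{3}$ only gives $O(h_\varkappa^{3})$, since $\mathcal{L}u=u''-\rho^{2}u=O(1)$; the promised ``one more integration by parts'' is precisely where the fourth derivative of $u$ must enter, and the cleanest way to make that honest is to use the fourth-order Peano kernel from the start with $\|G(\varkappa,\cdot)\|_{1}\le Ch_\varkappa^{4}$. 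Second, the constants $c_{i}$ in these estimates depend on the product $\rho h_\varkappa$ (they degenerate as the tension grows for fixed mesh), a dependence that both your sketch and the paper's statement suppress; this is harmless for the paper's use of the theorem with fixed $\rho$ and $h_\varkappa\to 0$, but it should be acknowledged if the proof is written out.
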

\begin{proof}
	See \cite{pruess1976properties} for the proof.
\end{proof}
\begin{theorem}\label{4:2:thm:2}
	Let $\mathscr{U}^{n+1}(\varkappa)$ be the collocation approximation in (\ref{eq:14})  from the exponential B-spline space $\mathscr{W}_{N_{\varkappa}+3}$ to the exact solution $u^{n+1}(\varkappa)$ of the TFBSM (\ref{eq:8})-(\ref{eq:9}). If  $u\in C^{4,0}[0,1]$ and $\psi\in C^{2,0}[0,1]$, then there exists a positive constant $\lambda^*$ independent of $h_{\varkappa}$ such that
	\begin{equation*}
	||u^{n+1}(\varkappa)-\mathscr{U}^{n+1}(\varkappa)||_{\infty}\leq \lambda^* h_{\varkappa}^{2},\hspace{0.5cm} \forall n\geq 0.
	\end{equation*}
\end{theorem}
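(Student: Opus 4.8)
The plan is to compare the collocation approximation $\mathscr{U}^{n+1}$ with the interpolating spline $\widetilde{V}^{n+1}$ from Theorem \ref{4:2:thm:1}, and then invoke the triangle inequality
\[
\|u^{n+1}-\mathscr{U}^{n+1}\|_\infty \leq \|u^{n+1}-\widetilde{V}^{n+1}\|_\infty + \|\widetilde{V}^{n+1}-\mathscr{U}^{n+1}\|_\infty .
\]
Theorem \ref{4:2:thm:1} (with $i=0$) already controls the first term by $c_0 h_\varkappa^4$, so the whole task reduces to bounding the second term by $O(h_\varkappa^2)$. First I would write both $\widetilde{V}^{n+1}=\sum_m \widetilde{\mathscr{R}}_m^{n+1}\mathscr{Q}_m$ and $\mathscr{U}^{n+1}=\sum_m \mathscr{R}_m^{n+1}\mathscr{Q}_m$, set $\theta_m^{n+1}=\widetilde{\mathscr{R}}_m^{n+1}-\mathscr{R}_m^{n+1}$, and use Lemma \ref{4:2:lem:1} to get $\|\widetilde{V}^{n+1}-\mathscr{U}^{n+1}\|_\infty \leq \tfrac52 \max_m |\theta_m^{n+1}|$. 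Hence it suffices to estimate the coefficient differences $\theta_m^{n+1}$.

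Next I would derive the linear system satisfied by $\theta^{n+1}$. Since $\mathscr{U}^{n+1}$ satisfies the collocation equations (\ref{eq3})–(\ref{eq4}) exactly, while $\widetilde{V}^{n+1}$ satisfies them only up to a consistency defect, subtracting gives, for $0\le m\le N_\varkappa$,
\[
\chi_1\theta_{m-1}^{n+1}+\chi_2\theta_m^{n+1}+\chi_3\theta_{m+1}^{n+1}
=\eta\theta_{m-1}^{n}+\theta_m^{n}+\eta\theta_{m+1}^{n}
-\sum_{k=1}^{n} w_k\bigl[\cdots\bigr]+\widetilde{\gamma}\,\tau_m^{n+1},
\]
together with homogeneous boundary relations (the boundary conditions are interpolated exactly, so the defect there vanishes). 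Here $\tau_m^{n+1}$ collects the truncation error: it comes from replacing $u^{n+1},\partial_\varkappa u^{n+1},\partial_\varkappa^2 u^{n+1}$ at $\varkappa_m$ by the spline-interpolant values, so by Theorem \ref{4:2:thm:1} (cases $i=0,1,2$) one has $|\tau_m^{n+1}|\le C(c_0 h_\varkappa^4+c_1 h_\varkappa^3+c_2 h_\varkappa^2)=O(h_\varkappa^2)$, absorbing the $\alpha,\beta,\gamma$ coefficients into $C$. I would then pass to the max norm $E^{n+1}=\max_m|\theta_m^{n+1}|$ and use the structure of the matrix $S$ (rows $(\chi_1,\chi_2,\chi_3)$ plus the two modified boundary rows): since $\alpha,\gamma>0$ and $\bar\eta>0$, the central rows are diagonally dominant with $|\chi_2|-|\chi_1|-|\chi_3|\ge 1+2\gamma\widetilde{\gamma}>1$ for small $h_\varkappa$ — this is the quantitative ingredient that lets one bound $\|S^{-1}\|_\infty$ by a constant independent of $h_\varkappa$ and $n$. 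Combined with $\|Q\|_\infty\le 1+2\eta$ and the positivity/monotonicity of the $w_k$ from Lemma \ref{lemo}, this yields a recursion of the form $E^{n+1}\le E^n - \sum_{k=1}^n w_k(E^{n-k+1}-E^{n-k}) + \widetilde{\gamma}\,\|S^{-1}\|_\infty\,\max_m|\tau_m^{n+1}|$.

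The last step is to resolve this recursion. Using $E^0=0$ (the initial coefficients are obtained from the same interpolation problem, so $\widetilde{\mathscr{R}}^0=\mathscr{R}^0$) and telescoping exactly as in the stability proof of Theorem \ref{4:1thm:1} — rewriting $E^n-\sum_{k=1}^n w_k(E^{n-k+1}-E^{n-k})=w_nE^0+\sum_{k=0}^{n-1}(w_k-w_{k+1})E^{n-k}$ and invoking parts $(b)$–$(d)$ of Lemma \ref{lemo} — one obtains $E^{n+1}\le \max_{1\le j\le n+1}E^j$ plus the accumulated defect, and a discrete Gronwall argument over the $N_t$ time levels gives $E^{n+1}\le \widetilde{T}\,\Gamma(2-\mu)\,\|S^{-1}\|_\infty\, C\, h_\varkappa^2 =: \lambda_1 h_\varkappa^2$. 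Setting $\lambda^*=c_0 h_\varkappa^2+\lambda_1$ (or simply $\lambda^*=\max(c_0,\lambda_1)$ after noting $h_\varkappa^4\le h_\varkappa^2$) completes the proof. The main obstacle I anticipate is the uniform bound on $\|S^{-1}\|_\infty$: one must check that the diagonal dominance of the interior rows is not destroyed by the two modified first/last rows containing $\mathscr{E}_1,\mathscr{E}_3$, and that the bound survives uniformly as $\rho h_\varkappa\to 0$, where $\eta\to\frac16$, $\bar\eta\to\frac1{h_\varkappa^2}$ and $\tilde e(\tilde c-1)$ has a definite sign — so the constants must be tracked carefully in that limit.
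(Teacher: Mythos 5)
Your proposal follows the same architecture as the paper's proof: the identical triangle-inequality decomposition through the interpolant $\widetilde{V}^{n+1}$ of Theorem \ref{4:2:thm:1}, the same reduction to coefficient differences via Lemma \ref{4:2:lem:1}, the same error system obtained by subtracting the collocation equations from those satisfied by the interpolant (your $\tau_m^{n+1}$ is the paper's $\widetilde{\psi}_m^{n+1}-\psi_m^{n+1}$, bounded by $Mh_{\varkappa}^{2}$ through Theorem \ref{4:2:thm:1}), the same homogeneous boundary relations, and the same use of Lemma \ref{lemo} to telescope in time. The one substantive difference is how the tridiagonal system for the coefficient errors is inverted: you bound $\Vert S^{-1}\Vert_{\infty}$ by diagonal dominance, whereas the paper invokes a ``Taylor series expansion for sufficiently small $h_{\varkappa}$'' to replace $\lambda_{m\pm1}^{n+1}$ by $\lambda_{m}^{n+1}$ and then divides by $\chi_{1}+\chi_{2}+\chi_{3}$. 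Your route is the more defensible one, and it lands on the same constant: for small $h_{\varkappa}$ one has $\chi_{1},\chi_{3}<0$ (since $\bar\eta\sim 3/h_{\varkappa}^{2}$ dominates) and $|\chi_{2}|-|\chi_{1}|-|\chi_{3}|=\chi_{1}+\chi_{2}+\chi_{3}=(1+2\eta)(1+\gamma\widetilde{\gamma})$, which is exactly the paper's denominator; the paper also treats the exterior coefficients $\lambda_{-1}^{n+1},\lambda_{N_{\varkappa}+1}^{n+1}$ separately through (\ref{eq:39})--(\ref{eq:40}) rather than through the modified first and last rows of $S$, which disposes of your worry about $\mathscr{E}_{1},\mathscr{E}_{3}$.

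One caution on the final step: a ``discrete Gronwall argument over the $N_{t}$ time levels,'' taken literally, accumulates one copy of $\widetilde{\gamma}\max_{m}|\tau_{m}^{n+1}|$ per level and hence a factor $N_{t}\widetilde{\gamma}=\Gamma(2-\mu)\widetilde{T}\,h_{t}^{\mu-1}$, which is unbounded as $h_{t}\to0$; your stated constant $\widetilde{T}\Gamma(2-\mu)\Vert S^{-1}\Vert_{\infty}C$ does not come out of that argument. To obtain an $n$-independent constant you must either run the induction as the paper does, keeping the telescoped inequality in the form $E^{n+1}\le\frac{1}{1+\gamma\widetilde{\gamma}}\max_{j\le n}E^{j}+\frac{\widetilde{\gamma}Mh_{\varkappa}^{2}}{(2\eta+1)(1+\gamma\widetilde{\gamma})}$ and choosing the induction constant large enough that the contraction factor $\frac{1}{1+\gamma\widetilde{\gamma}}<1$ (available because $\gamma>0$) absorbs the per-step defect, or use the standard L1 device $E^{n+1}\le w_{n}^{-1}R$ with $w_{n}^{-1}\le (n+1)^{\mu}/(1-\mu)$, so that $\widetilde{\gamma}w_{n}^{-1}\le\Gamma(2-\mu)\widetilde{T}^{\mu}/(1-\mu)$. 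A minor arithmetic point: as $\rho h_{\varkappa}\to0$ one has $\eta\to\frac14$ rather than $\frac16$; this does not affect the argument. With the Gronwall step repaired in either of these ways your proof is complete and, in the inversion step, more rigorous than the paper's.
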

\begin{proof}
Let  $\widetilde{V}^{n+1}(\varkappa)$ be the unique exponential B\--spline interpolant to the exact solution $u^{n+1}(\varkappa)$ of the problem  (\ref{eq:8})-(\ref{eq:9}) given by
	\begin{equation}\label{eq:31}
	\widetilde{V}^{n+1}(\varkappa)=\sum_{m=-1}^{N_{\varkappa}+1}\mathscr{B}_{m}^{n+1}\mathscr{Q}_{m}(\varkappa).
	\end{equation}
	At time level $n+1$, we can write
		\begin{equation}\label{eq5}
	{}_{0}^{C}D^{\mu}_{t}\widetilde{V}^{n+1}(\varkappa)= \alpha\frac{\partial^{2}\widetilde{V}^{n+1}(\varkappa)}{\partial \varkappa^2}+\beta\frac{\partial \widetilde{V}^{n+1}(\varkappa)}{\partial x}-\gamma \widetilde{V}^{n+1}(\varkappa)+\widetilde \psi^{n+1}(\varkappa), \hspace{0.2cm} (-1\leq n \leq N_{t}-1)
	\end{equation}
	with
	\begin{equation}\label{eq6}
	\left\{%
	\begin{array}{ll}
	\widetilde{V}^0(\varkappa)=z(\varkappa),\\
	\widetilde{V}^{n+1}(\varkappa_{0})=\mathscr{H}(t_{n+1}), & -1\leq n \leq N_{t}-1,\\
	\widetilde{V}^{n+1}(\varkappa_{N_{\varkappa}})=\mathscr{G}(t_{n+1}), & -1\leq n \leq N_{t}-1.
	\end{array}
	\right.
	\end{equation}

\noindent Now using the equation (\ref{eq:31}) in (\ref{eq5}) we have 
\begin{eqnarray*}
	\chi_{1}\mathscr{B}_{m-1}^{n+1}+\chi_{2}\mathscr{R}_{B}^{n+1}+\chi_{3}\mathscr{B}_{m+1}^{n+1}=\eta \mathscr{B}_{m-1}^{n}+\mathscr{B}_{m}^{n}+\eta \mathscr{B}_{m+1}^{n}- \sum_{k=1}^{n}w_{k}[(\eta\mathscr{B}_{m-1}^{n-k+1}+\mathscr{B}_{m}^{n-k+1}+\eta\mathscr{B}_{m+1}^{n-k+1})\\-(\eta\mathscr{B}_{m-1}^{n-k}+\mathscr{B}_{m}^{n-k}+\eta\mathscr{B}_{m+1}^{n-k})]+\widetilde{\gamma}\widetilde{\psi}^{n+1}(\varkappa_{m}),~~~~~~~~~~~~~~~~~~~
\end{eqnarray*}
\begin{equation}\label{eq:35}
~~~~~~~~~~~~~~~~~~~~~~~~~~~~~~~~~~~~~~~~~~~~~~~~~~~~~~~~~~~~~~~~~~~~~~~~~~~~~~~~~~~~~~~~~~~~~~~~~~~~(0\leq m \leq N_{\varkappa}, -1\leq n \leq N_{t}-1)
\end{equation}
and also the boundary conditions in (\ref{eq6}) together with (\ref{eq:31}) yield
\begin{eqnarray}
\eta\mathscr{B}_{-1}^{n+1}=\mathscr{H}_{n+1}-\mathscr{B}_{0}^{n+1}-\eta\mathscr{B}_{1}^{n+1},\label{eq:36}\\
\eta\mathscr{B}_{N_{\varkappa}+1}^{n+1}=\mathscr{G}_{n+1}-\mathscr{B}_{N_{\varkappa}}^{n+1}-\eta\mathscr{B}_{N_{\varkappa}-1}^{n+1}.\label{eq:37}
\end{eqnarray}	
Next, the subtraction of equation (\ref{eq:18}) from (\ref{eq:35}) gives 
\begin{eqnarray*}
	\chi_{1}\lambda_{m-1}^{n+1}+\chi_{2}\lambda_{m}^{n+1}+\chi_{3}\lambda_{m+1}^{n+1}=\eta \lambda_{m-1}^{n}+\lambda_{m}^{n}+\eta \lambda_{m+1}^{n}- \sum_{k=1}^{n}w_{k}[(\eta\lambda_{m-1}^{n-k+1}+\lambda_{m}^{n-k+1}+\eta\lambda_{m+1}^{n-k+1})\\-(\eta\lambda_{m-1}^{n-k}+\lambda_{m}^{n-k}+\eta\lambda_{m+1}^{n-k})]+\widetilde{\gamma}(\widetilde{\psi}_{m}^{n+1}-\psi_{m}^{n+1}),~~~~~~~~~~~~~~~~~~~
\end{eqnarray*}
\begin{equation}\label{eq:38}
~~~~~~~~~~~~~~~~~~~~~~~~~~~~~~~~~~~~~~~~~~~~~~~~~~~~~~~~~~~~~~~~~~~~~~~~~~~~~~~~~~~~~~~~~~~~~~~~~~~~(0\leq m \leq N_{\varkappa}, -1\leq n \leq N_{t}-1).
\end{equation}
Also, subtracting equations (\ref{eq:19}) and (\ref{eq:20}) from (\ref{eq:36}) and (\ref{eq:37}) respectively yield
\begin{eqnarray}
\eta\lambda_{-1}^{n+1}=-\lambda_{0}^{n+1}-\eta\lambda_{1}^{n+1}, \hspace{0.5cm} -1\leq n \leq N_{t}-1,\label{eq:39}\\
\eta\lambda_{N_{\varkappa}+1}^{n+1}=-\lambda_{N_{\varkappa}}^{n+1}-\eta\lambda_{N_{\varkappa}-1}^{n+1},\hspace{0.5cm} -1\leq n \leq N_{t}-1,\label{eq:40}
\end{eqnarray}
where   $\lambda_{m}^{n+1}=\mathscr{B}_{m}^{n+1}-\mathscr{R}_{m}^{n+1}$ for  $-1\leq m\leq N_{\varkappa}+1$ and $-1\leq n\leq N_{t}-1$. 

Now subtracting equation (\ref{eq7}) from equation (\ref{eq5}) and using Theorem \ref{4:2:thm:1} we have
\begin{equation*}
\widetilde{\psi}_{m}^{n+1}-\psi_{m}^{n+1}=O(h_{\varkappa}^2).
\end{equation*}
Thus, it follows from the above equation that
\begin{equation}\label{eq8}
|\widetilde{\psi}_{m}^{n+1}-\psi_{m}^{n+1}|\leq Mh_{\varkappa}^2,
\end{equation}
where $M=\frac{c_{0}h_{\varkappa}^{2}\widetilde{T}}{\Gamma(2-\mu)}+\alpha c_{2}+\beta c_{1}h_{\varkappa}+\gamma c_{0}h_{\varkappa}^{2}$.

Now, let us take $\widetilde{\lambda}^{n+1}=\underset{-1 \leq m \leq N_{\varkappa}+1}{\max}|\lambda_{m}^{n+1}|$.
	Also, the initial condition of the problem implies that $\widetilde{\lambda}^{0}=0$. At the first time level, i.e. for $n=0,$ the equation (\ref{eq:38}) can be written as
	
	\begin{eqnarray}\label{eq:41}
	\chi_{2}\lambda_{m}^{1}=-	\chi_{1}\lambda_{m-1}^{1}-\chi_{3}\lambda_{m+1}^{1}+\eta \lambda_{m-1}^{0}+\lambda_{m}^{0}+\eta \lambda_{m+1}^{0}+\widetilde{\gamma}(\widetilde{\psi}_{m}^{1}-\psi_{m}^{1}),
	\end{eqnarray}
~~~~~~~~~~~~~~~~~~~~~~~~~~~~~~~~~~~~~~~~~~~~~~~~~~~~~~~~~~~~~~~~~~~~~~~~~~~~~~~~~~~~~~~~$0\leq m \leq N_{\varkappa}$.\\
By using Taylor's series expansion, for sufficiently small $h_{\varkappa},$ we have 
	\begin{eqnarray}\label{eq:42}
	\chi_{2}\lambda_{m}^{1}=	\lambda_{m}^{1}(-\chi_{1}-\chi_{3})+\widetilde{\gamma}(\widetilde{\psi}_{m}^{1}-\psi_{m}^{1}).
	\end{eqnarray}
After rearranging the terms and taking the absolute values on both sides of equation (\ref{eq:42}) we have
	\begin{eqnarray}\label{eq:43}
|\lambda_{m}^{1}|=\left| \frac{\widetilde{\gamma}(\widetilde{\psi}_{m}^{1}-\psi_{m}^{1})}{(\chi_{1}+\chi_{2}+\chi_{3})}\right|\leq \frac{\widetilde{\gamma}Mh_{\varkappa}^2}{(2\eta+1)(1+\gamma \widetilde{\gamma})}\leq \widetilde{M}h_{\varkappa}^2, \hspace{0.5cm} 0\leq m \leq N_{\varkappa},
\end{eqnarray}
where $\widetilde{M}=\frac{M\widetilde{\gamma}}{(1+2\eta)(1+\gamma \widetilde{\gamma})}$.	Also, from the boundary conditions we have the following estimates for $\lambda_{-1}^{1}$ and $\lambda_{N_{\varkappa}+1}^{1}:$
\begin{equation}\label{eq:44}
|\lambda_{-1}^{1}| \leq \widetilde{k}h_{\varkappa}^{2} \hspace{0.5cm} \mbox{and } \hspace{0.5cm} |\lambda_{N_{\varkappa}+1}^{1}| \leq \widetilde{k}h_{\varkappa}^{2},
\end{equation}
where $\widetilde{k}$ is a constant independent of $h_{\varkappa}$.\\	
Now combining the inequalities (\ref{eq:43}) and (\ref{eq:44}), we have
	\begin{equation}\label{eq:45}
	\widetilde{\lambda}^{1} \leq M_{1}h_{\varkappa}^{2},
	\end{equation}
	where  $M_1=\max \{\widetilde{M},\widetilde{k} \}$.
	
	Next, we use the mathematical induction to prove that $ \widetilde{\lambda}^{n+1}\le r^{*} h_x^2,$ where $r^{*}$ is a positive constant independent of $h_x$. Therefore, for this purpose we assume that
	\begin{equation}\label{eq:46} 
 \widetilde{\lambda}^{j}\leq K_j h_x^2
	\end{equation}
	is true for $1\leq j \leq n$. Since in equation  (\ref{eq:45}) we have shown that result (\ref{eq:46}) is true for $j=1$. Now we will prove the result for $j=n+1$. So, let $\widetilde{K}=\underset{0 \leq j \leq n}{\max} K_{j}$ and write the equation (\ref{eq:38}) in the form
	\begin{eqnarray*}
		\chi_{2}\lambda_{m}^{n+1}=-\chi_{1}\lambda_{m-1}^{n+1}-\chi_{3}\lambda_{m+1}^{n+1}+w_n(\eta \lambda_{m-1}^{0}+\lambda_{m}^{0}+\eta \lambda_{m+1}^{0})+ \sum_{k=0}^{n-1}(w_{k}-w_{k+1})(\eta\lambda_{m-1}^{n-k}+\lambda_{m}^{n-k}+\eta\lambda_{m+1}^{n-k})\\+\widetilde{\gamma}(\widetilde{\psi}_{m}^{n+1}-\psi_{m}^{n+1}),~~~~~~~~~~~~~~~~~~~
	\end{eqnarray*}
	\begin{equation}\label{eq:47}
	~~~~~~~~~~~~~~~~~~~~~~~~~~~~~~~~~~~~~~~~~~~~~~~~~~~~~~~~~~~~~~~~~~~~~~~~~~~~~~~~~~~~~~~~~~~~~~~~~~~~(0\leq m \leq N_{\varkappa}).
	\end{equation}
By applying Taylor's series expansion, for sufficiently small $h_{\varkappa}$ we can have	
		\begin{eqnarray*}
		\chi_{2}\lambda_{m}^{n+1}=(-\chi_{1}-\chi_{3})\lambda_{m}^{n+1}+w_n(2\eta+1) \lambda_{m}^{0}+(2\eta+1)  \sum_{k=0}^{n-1}(w_{k}-w_{k+1})\lambda_{m}^{n-k}+\widetilde{\gamma}(\widetilde{\psi}_{m}^{n+1}-\psi_{m}^{n+1}),~~~~~~~~~~~~~~~~~~~
	\end{eqnarray*}
After rearranging the terms and taking the absolute values on both sides of above equation and using the assumption  (\ref{eq:46}) we have	
	\begin{eqnarray*}
	(\chi_{1}+\chi_{2}+\chi_{3})|\lambda_{m}^{n+1}|&\leq& w_n(2\eta+1) |\lambda_{m}^{0}|+(2\eta+1)  \sum_{k=0}^{n-1}(w_{k}-w_{k+1})|\lambda_{m}^{n-k}|+\widetilde{\gamma}|(\widetilde{\psi}_{m}^{n+1}-\psi_{m}^{n+1})|\\
		&\leq&\widetilde{K}(2\eta+1)\left( \sum_{k=0}^{n-1}\{(w_{k}-w_{k+1})+w_n\} \right)h_{\varkappa}^2+\widetilde{\gamma}|(\widetilde{\psi}_{m}^{n+1}-\psi_{m}^{n+1})|.\\ \mbox{Thus},~~	|\lambda_{m}^{n+1}|	&\leq&\frac{w_0\widetilde{K}(2\eta+1)h_{\varkappa}^2}{(\chi_{1}+\chi_{2}+\chi_{3})}+\frac{\widetilde{\gamma}|(\widetilde{\psi}_{m}^{n+1}-\psi_{m}^{n+1})|}{(\chi_{1}+\chi_{2}+\chi_{3})}.
	\end{eqnarray*}
Now using the inequality (\ref{eq8}) we have	
		\begin{eqnarray}\label{eq:48}
		|\lambda_{m}^{n+1}|	&\leq&\frac{\widetilde{K}h_{\varkappa}^2}{(1+\gamma \widetilde{\gamma})}+\frac{\widetilde{\gamma}M h_{\varkappa}^{2}}{(2\eta+1)(1+\gamma \widetilde{\gamma})}\leq M^*h_{\varkappa}^2, \hspace{0.5cm} 0\leq m \leq N_{\varkappa},
	\end{eqnarray}
	where $M^*=\frac{\widetilde{K}}{(1+\gamma \widetilde{\gamma})}+\frac{M\widetilde{\gamma} }{(2\eta+1)(1+\gamma \widetilde{\gamma})}$.
	
	Like before as in (\ref{eq:44}), we can obtain bounds for $|\lambda_{-1}^{n+1}| $ and $|\lambda_{N_{\varkappa}+1}^{n+1}|$. With the help of these two bounds along with (\ref{eq:48}), it can be inferred that there exists a constant $r^{*}$ independent of $h_{\varkappa}$ such that
	\begin{eqnarray}\label{eq:49}
  \widetilde{\lambda}^{n+1}\le r^{*} h_x^2, \hspace{0.5cm} 
	\end{eqnarray}
where $r^{*}=\max \{M^*,\widetilde{k} \}$.

	Now 
	\begin{equation*}
	\widetilde{V}^{n+1}(\varkappa)-\mathscr{U}^{n+1}(\varkappa)= \sum_{m=-1}^{N_{\varkappa}+1}(\mathscr{B}_{m}^{n+1}-\mathscr{R}_{m}^{n+1})\mathscr{Q}_{m}(\varkappa).
	\end{equation*}
	Using Lemma \ref{4:2:lem:1} and inequality (\ref{eq:49}) in above relation we have
	\begin{equation}\label{eq:50}
	||\widetilde{V}^{n+1}(\varkappa)-\mathscr{U}^{n+1}(\varkappa)||_{\infty} \leq \frac{5}{2}r^{*}h_{\varkappa}^{2}.
	\end{equation}
	Using the triangle inequality we get
	\begin{equation}\label{eq:51}
	||u^{n+1}(\varkappa)-\mathscr{U}^{n+1}(\varkappa)||_{\infty}\leq ||u^{n+1}(\varkappa)-\widetilde{V}^{n+1}(\varkappa)||_{\infty}+||\widetilde{V}^{n+1}(\varkappa)-\mathscr{U}^{n+1}(\varkappa)||_{\infty}.
	\end{equation}
	Thus, using Theorem \ref{4:2:thm:1} and inequality (\ref{eq:50}), we get
	\begin{equation*}
	||u^{n+1}(\varkappa)-\mathscr{U}^{n+1}(\varkappa)||_{\infty}\leq \lambda^* h_{\varkappa}^{2},\hspace{0.5cm} \forall n\geq 0,
	\end{equation*}
	where $\lambda^*=\frac{5}{2}r^{*}+c_{0}h_{\varkappa}^{2}$.\\
	Hence, the proof is complete. 
\end{proof}
\begin{theorem}\label{4:2:thm:3}
	The present numerical scheme (\ref{eq:18}) for the TFBSM (\ref{eq:8})-(\ref{eq:9}) is convergent of order  $O(h_{t}^{2-\mu} + h_{\varkappa}^{2})$.
\end{theorem}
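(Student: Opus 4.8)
The plan is to combine the two error contributions---the temporal truncation error from the $L1$ discretization of the Caputo derivative and the spatial error from the exponential B-spline collocation---into a single global estimate. Let $u(\varkappa_m,t_{n+1})$ denote the exact solution and $\mathscr{U}^{n+1}(\varkappa_m)$ the fully discrete approximation, and write the global error as
\begin{equation*}
e_m^{n+1} = u(\varkappa_m,t_{n+1}) - \mathscr{U}^{n+1}(\varkappa_m).
\end{equation*}
First I would subtract the fully discrete scheme (\ref{eq:18}) from the semidiscrete identity satisfied by the exact solution at the collocation points, namely equation (\ref{eq1}), keeping the truncation term $R^{n+1}$ explicit. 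This produces an error recursion for $e_m^{n+1}$ of exactly the same algebraic form as (\ref{eq:26}), but with an extra inhomogeneous term coming from $R^{n+1}$ (controlled by (\ref{eq:tr}), i.e.\ $|R^{n+1}|\le k_1 h_t^{2-\mu}$) together with the spatial interpolation/collocation discrepancy already bounded in Theorem \ref{4:2:thm:2} by $\lambda^* h_\varkappa^2$.

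The key steps, in order: (i) decompose $e_m^{n+1}$ via the triangle inequality into the interpolation-type error $u^{n+1}(\varkappa)-\mathscr{U}^{n+1}(\varkappa)$ already handled in Theorem \ref{4:2:thm:2} and the remaining piece that carries the time-truncation error; (ii) for that remaining piece, derive the discrete recursion in the spline coefficients, apply Taylor expansion at the collocation points (exactly as in the passage from (\ref{eq:38}) to (\ref{eq:42}) and from (\ref{eq:47}) onward) to reduce the three-point stencil $\eta(\cdot)_{m-1}+(\cdot)_m+\eta(\cdot)_{m+1}$ to $(2\eta+1)(\cdot)_m$ up to higher-order terms; (iii) use Lemma \ref{lemo} parts $(a)$--$(d)$---in particular $w_0=1$, positivity and monotonicity of $w_k$, and the telescoping identity $\sum_{k=0}^{n}(w_k-w_{k+1})+w_{n+1}=1$---to absorb the history sum, yielding at each level a bound of the form
\begin{equation*}
|e_m^{n+1}| \le \widetilde{K}\,h_\varkappa^2 + C\,\widetilde{\gamma}\,h_t^{2-\mu} \cdot(\text{bounded factor}),
\end{equation*}
and recalling $\widetilde{\gamma}=\Gamma(2-\mu)h_t^\mu$ so that $\widetilde{\gamma}\,h_t^{-\mu}\cdot h_t^{2-\mu}$ scales like $h_t^{2-\mu}$ after the $\widetilde{\gamma}$ normalization is accounted for consistently with (\ref{eq:11}); (iv) close the induction on $n$ just as in Theorem \ref{4:2:thm:2}, and finally invoke Lemma \ref{4:2:lem:1} ($\sum|\mathscr{Q}_m(\varkappa)|\le \tfrac52$) to pass from the coefficient bound to the $\|\cdot\|_\infty$ bound on the function, obtaining $\|u^{n+1}-\mathscr{U}^{n+1}\|_\infty \le C\,(h_t^{2-\mu}+h_\varkappa^2)$.

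The main obstacle I anticipate is bookkeeping the factor $\widetilde{\gamma}=\Gamma(2-\mu)h_t^\mu$ correctly through the accumulation of the per-step truncation errors $R^{n+1}$: one must verify that summing $n+1 = O(\widetilde{T}/h_t)$ local errors of size $h_t^{2-\mu}$, weighted by the $w_k$ through the telescoping in Lemma \ref{lemo}$(d)$, does not degrade the exponent---that is, that the $L1$ history sum contributes an $O(1)$ multiplicative constant (uniform in $n$ and $h_t$) rather than an $O(1/h_t)$ blow-up. This is precisely where the monotonic decrease of $\langle w_k\rangle$ and the identity $\sum_{k=0}^{n}(w_k-w_{k+1})+w_{n+1}=1$ are essential, and it is the one place where a careless estimate would produce the wrong order. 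Everything else---the Taylor reduction of the stencil, the stability-type inequality $|\Upsilon_1|^2<(\Upsilon_1+\Upsilon_2+\Upsilon_3)^2+\Upsilon_4^2$ inherited from Theorem \ref{4:1thm:1}, and the final spline-sum bound---is routine once the temporal accumulation is pinned down.
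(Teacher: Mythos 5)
Your plan is sound, and its skeleton---bound the spatial error via Theorem \ref{4:2:thm:2} and the temporal error via the truncation estimate (\ref{eq:tr}), then combine by the triangle inequality---is exactly the decomposition the paper uses. The difference is one of depth: the paper's entire proof of this theorem is a single sentence that cites Theorem \ref{4:2:thm:2} together with (\ref{eq:tr}) and immediately writes down $\|u(\varkappa,t)-\mathscr{U}^{n+1}(\varkappa)\|_\infty\leq k_2 h_t^{2-\mu}+k_3 h_\varkappa^2$, with no propagation argument at all. Everything you describe in steps (ii)--(iv)---carrying $R^{n+1}$ through the coefficient recursion, reducing the stencil by Taylor expansion, invoking Lemma \ref{lemo} to control the history sum, and closing an induction before converting coefficient bounds to function bounds via Lemma \ref{4:2:lem:1}---is work the paper simply does not perform for this theorem (it performs an analogous induction only for the purely spatial discrepancy in Theorem \ref{4:2:thm:2}, where the inhomogeneous term is $\widetilde{\gamma}(\widetilde{\psi}_m^{n+1}-\psi_m^{n+1})=O(h_\varkappa^2)$ and $R^{n+1}$ never appears). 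The ``main obstacle'' you flag is therefore not a defect of your proposal but the genuine gap in the paper's own argument: one must check that accumulating the local errors $\widetilde{\gamma}R^{n+1}=O(h_t^2)$ over $O(\widetilde{T}/h_t)$ steps costs only a bounded factor, which in the standard $L1$ analysis comes from the observation that $w_n^{-1}h_t^{\mu}\leq t_{n+1}^{\mu}/(1-\mu)$ is uniformly bounded---a slightly sharper fact than the telescoping identity of Lemma \ref{lemo}$(d)$ alone. If you carry that estimate through, your argument yields the stated $O(h_t^{2-\mu}+h_\varkappa^2)$ rate and is, if anything, more complete than the published proof.
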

\begin{proof} 
	Theorem \ref{4:2:thm:2} together with relation (\ref{eq:tr}) provides us the following result 
	\begin{equation*}
	||u(\varkappa,t)-\mathscr{U}^{n+1}(\varkappa)||_{\infty}\leq k_{2} h_{t}^{2-\mu} +k_{3}h_{\varkappa}^{2}  ,
	\end{equation*}
	where $ k_{2}$ and $ k_{3}$ are positive constants, and thus the proposed method is convergent of order $O(h_{t}^{2-\mu} + h_{\varkappa}^{2})$ .
\end{proof}

	\section{Numerical illustrations and applications }\label{sec:5}
	\noindent This section includes the numerical results for three test problems to examine the performance of the proposed numerical scheme. Furthermore, Example (\ref{eg:4}) is considered as an application of the proposed scheme to price several different options like a European call option, European put option, and European double barrier knock-out call option. If $u(\varkappa_{m},t_{n})$ and $\mathscr{U}_{m}^{n}$ are the exact and approximate solutions of problem (\ref{eq:8})-(\ref{eq:9}) respectively at the point $(\varkappa_{m},t_{n})$, then the accuracy of the numerical solution will be measured
	as follows
	\begin{eqnarray}
L_{\infty}(h_{\varkappa},h_{t})=||u(\varkappa_{m},t_{n})-\mathscr{U}_{m}^{n}||_{\infty}=\mathop {\max}_{1\leq n \leq N_{t}}\mathop {\max}_{1\leq m \leq N_{\varkappa}-1}|u(\varkappa_{m},t_{n})-\mathscr{U}_{m}^{n}|\label{eq:000}\\
L_{2}(h_{\varkappa},h_{t})=||u(\varkappa_{m},t_{n})-\mathscr{U}_{m}^{n}||_{2}=\mathop {\max}_{1\leq n \leq N_{t}}\sqrt{h_{\varkappa}\sum_{m=1}^{N_{\varkappa}-1}(u(\varkappa_{m},t_{n})-\mathscr{U}_{m}^{n})^{2}}\label{eq:301}
	\end{eqnarray}
The following formula is used to evaluate the order of convergence (EOC)
	\begin{equation}\label{eq:302}
	\displaystyle \emph{EOC} = \begin{cases} 
\log_{2}\left(\frac{L_{j}(h_{\varkappa},(h_{t})_{1})}{L_{j}(h_{\varkappa},(h_{t})_{2})}\right), & \text{in time,} \\  
\log_{2}\left(\frac{L_{j}((h_{\varkappa})_{1},h_{t})}{L_{j}((h_{\varkappa})_{2},h_{t})}\right), & \text{in space},  
	\end{cases}
	\end{equation}
	where $j=2$ or $\infty$.
		\begin{example}\label{eg:1}  \emph{\cite{zhang2016numerical} Let us consider the problem (\ref{eq:8})\--(\ref{eq:9}) on the domain  $( 0, 1 ) \times(0,1)$,} 
			\begin{equation*}
			{}_{0}^{C}D^{\mu}_{t}u(\varkappa,t)= \alpha\frac{\partial^{2}u(\varkappa,t)}{\partial \varkappa^2}+\beta\frac{\partial u(\varkappa,t)}{\partial \varkappa}-\gamma u(\varkappa,t)+\psi(\varkappa,t), 
			\end{equation*}
			\emph{with}
			\begin{equation*}
			\left\{%
			\begin{array}{ll}
			u(\varkappa,0)=\varkappa^2(1-\varkappa),\\	
			u(0,t)=0,\\
			u(1,t)=0,
			\end{array}%
			\right.~~~~~~~~~~~~~~~~~~~~~~~~~~~~~~~~~~~~~~~~~~~~~~~~~~~~~~~~~~~~~~~~~~~~~~~~~~~~~~~~~~
			\end{equation*}
		\emph{and the source term}
		\begin{equation*}\psi(\varkappa,t)=\frac{2}{\Gamma(3-\mu)}t^{2-\mu}\varkappa^{2}(1-\varkappa)+\frac{2}{\Gamma(2-\mu)}t^{1-\mu}\varkappa^{2}(1-\varkappa)-(t+1)^{2}(\alpha(2-6\varkappa)+\beta \varkappa(2-3\varkappa)-\gamma \varkappa^{2}(1-\varkappa)).
		\end{equation*}	
		\emph{The exact solution of this test problem is }  $u(\varkappa,t)=(t+1)^{2}\varkappa^{2}(1-\varkappa)$. \emph{We will solve this problem with pre\--mentioned values of parameters} $r_{f}=0.05$, $\sigma=0.25$. 
	\end{example}

		
		
		
	\begin{table}
	\caption{ Errors $L_{2}$ and $L_{\infty}$ and corresponding orders of convergence with $\mu=0.5$, $\rho=1.5$ and $h_{\varkappa}=0.002$ for Example \ref{eg:1}}
	\begin{tabular}{ |p{2cm} p{2cm} p{2cm} p{2cm} p{2cm} p{2cm}  p{2cm}|}		\multicolumn{7}{c}{} \\
		\hline
		$h_{t}$& $\frac{1}{10}$& $\frac{1}{20}$&$\frac{1}{40}$&$\frac{1}{80}$&$\frac{1}{160}$&$\frac{1}{320}$\\
		\hline
		$L_{2}$&   1.0584e-03 & 3.8720e-04&  1.3997e-04&5.0197e-05&  1.7890e-05&  6.3288e-06	\\
		EOC&$\--$&1.4507&1.4680&1.4794&1.4885&1.4991\\
		$L_{\infty}$	&  1.5570e-03& 5.6937e-04&2.0577e-04& 7.3779e-05& 2.6286e-05& 9.2927e-06
		\\
		EOC	&$\--$	 &1.4513&1.4684&1.4797& 1.4889&1.5001\\	
		\hline
	\end{tabular}\label{5tab:1}
\end{table}
\begin{table}
	\caption{Comparison of the maximum error $L_{\infty}$ and the corresponding order of convergence with $\mu=0.7$, $N_{t}=1000$ and $\rho=1.5$ for Example \ref{eg:1}.}
	\begin{tabular}{ |p{1cm} p{2cm} p{1.2cm} p{2cm} p{1.2cm} p{2cm} p{1.2cm} p{2cm} p{1.2cm}|}		\multicolumn{8}{c}{} \\
		\hline
		$	N_{\varkappa}$	&  Present method& & 	Method in \cite{zhang2016numerical}&&Present method&&	Method in \cite{zhang2016numerical}& \\
		\hline
		& $L_{\infty}$-error &EOC &$L_{\infty}$-error &EOC& $L_{2}$-error&EOC&$L_{2}$-error&EOC\\
		\hline
		&  & & && &&&\\		
		4	&  0.002&\--&0.0030&\--&0.001& \--&0.0024&\--\\
		8	&   4.7739e-04 & 2.12&7.6750e-04&1.98& 2.8144e-04& 2.05&6.1678e-04&1.96\\
		16	&   1.1112e-04&2.10&1.8629e-04&2.04& 6.5497e-05& 2.10&1.5079e-04&2.03\\
		
		32	& 2.2337e-05 &2.31&4.0698e-05&2.19&1.2680e-05&2.37&3.2995e-05&2.19
		\\
		\hline
	\end{tabular}\label{5tab:2}
\end{table}
%
%
%
	\begin{table}
	\caption{ {$L_{2}$ error and corresponding order of convergence for various of $\mu$ with $\rho=1.5$ and $N_{t}=500$ for Example \ref{eg:1}.}}
	\begin{tabular}{ |p{1.5cm} p{2cm} p{2cm} p{2cm} p{2cm} p{2cm} p{2cm}|}		\multicolumn{6}{c}{} \\
		\hline
		$\mu$	&	$N_{\varkappa}$&$2^{2}$&  $2^{3}$&$2^{4}$ & $2^{5}$& $2^{6}$ \\
		\hline
		0.2	&	$L_{2}$&1.3129e-03& 3.2284e-04& 7.9810e-05&1.9753e-05& 4.7919e-06\\
		&	EOC&$\--$&2.0239& 2.0162&2.0145&2.0434\\
		
		0.4	&	$L_{2}$&1.2590e-03& 3.0844e-04&7.5439e-05&1.7916e-05&3.6469e-06\\
		&	EOC&$\--$& 2.0292&2.0316& 2.0741& 2.2965\\
		
	0.6	&	$L_{2}$& 1.1961e-03 & 2.8875e-04&6.6856e-05&  1.2634e-05&  2.6656e-06
		\\	
			&	EOC&$\--$&2.0504& 2.1107&2.4038& 2.2448\\
		\hline
	\end{tabular}\label{5tab:3}
\end{table}
	\begin{table}
		\caption{ Maximum error $L_{\infty}$ and corresponding order of convergence for various $\mu$ with $\rho=1.5$ and $N_{t}=500$ for Example \ref{eg:1}.}
		\begin{tabular}{ |p{1.5cm} p{2cm} p{2cm} p{2cm} p{2cm} p{2cm} p{2cm}|}		\multicolumn{7}{c}{} \\
			\hline
			$\mu$	&	$N_{\varkappa}$&$2^{2}$&  $2^{3}$&$2^{4}$& $2^{5}$& $2^{6}$ \\
			\hline
			
			0.2	&	$L_{\infty}$	&  2.3031e-03 & 5.4273e-04& 1.3376e-04& 3.3122e-05& 8.0812e-06   \\
			&EOC	&$\--$	& 2.0852 &2.0206&2.0137& 2.0352\\
		0.4	&$L_{\infty}$& 2.2197e-03&5.2005e-04& 1.2692e-04&3.0408e-05& 6.3578e-06
		 \\
			&EOC	&$\--$&2.0936& 2.0348&2.0614& 2.2579\\
			0.6	&	$L_{\infty}$	& 2.1229e-03 & 4.8913e-04& 1.1338e-04& 2.2242e-05& 5.2007e-06\\
&EOC&$\--$	&2.1177&2.1090&2.3499&2.0965 \\	
			\hline
		\end{tabular}\label{5tab:4}
	\end{table}
Table \ref{5tab:1} contains the errors $L_{2}$ and $L_{\infty}$ and respective orders of convergence, for Example \ref{eg:1} with $\rho=1.5$, $\mu=0.5$, $h_{\varkappa}=0.002$, and different time spacings $h_{t}$. The results of this table validate the theoretical order of convergence proved in Theorem \ref{4:2:thm:3}. Similarly,  $L_{2}$ and $L_{\infty}$ errors with corresponding orders of convergence are calculated for $\rho=1.5$, $N_{t}=500$ and multiple values of $\mu,$ and are shown in Tables \ref{5tab:3} and \ref{5tab:4}, respectively. From these tables, it can be viewed that as we increase the discretization points $N_{\varkappa}$, the errors decrease and the numerically calculated spatial order of convergence is $2$, which is in support of the result proved in Theorem \ref{4:2:thm:3}. 

Figures \ref{fig:2}, \ref{fig:6}, and \ref{fig:9} represent the exact and numerical solutions taking different fractional orders $\mu$ for Examples \ref{eg:1}, \ref{eg:2}, and \ref{eg:3} respectively. From these graphs, we can say that the proposed method approximates the TFBSM very well. The graphs shown in Figures \ref{fig:3}, \ref{fig:7}, and \ref{fig:10} compare the numerical and exact solutions of  Examples \ref{eg:1}, \ref{eg:2}, and \ref{eg:3} respectively, at different time levels. From these graphs, we observe that the numerical and exact solutions follow almost the same path.
Further, Figure \ref{fig:4} shows the three-dimensional plots of
the maximum errors for Example \ref{eg:1} with $\rho=1.5$, $\mu=0.9$, and different $N_{\varkappa},$ $N_{t}$. Moreover, for $\rho=8.6$, $\mu=0.99$, and different $N_{\varkappa},$ $N_{t}$ the three-dimensional graphics of the maximum errors for Example \ref{eg:3} are given in Figure \ref{fig:11} and
show that the errors decrease as the discretization points $N_{\varkappa}$ and $N_{t}$ increase.
\begin{figure}
	\centering
	\includegraphics[width=1\linewidth]{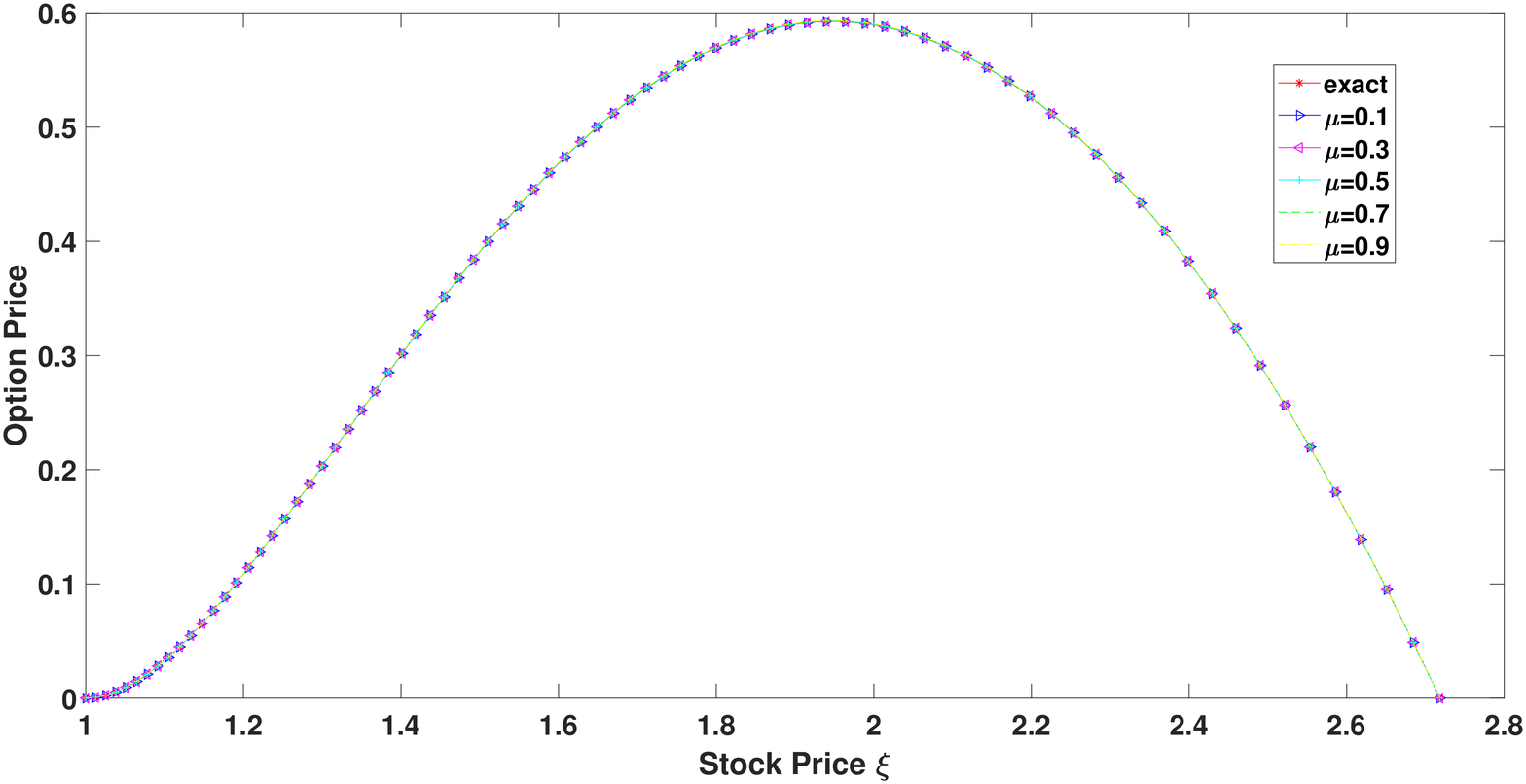}
	\caption{Exact and numerical solutions of Example \ref{eg:1} with $\rho=1.5$ and $N_{\varkappa}=N_{t}=80$.}\label{fig:2}
\end{figure}
	\begin{figure}
	\centering
	\includegraphics[width=1\linewidth]{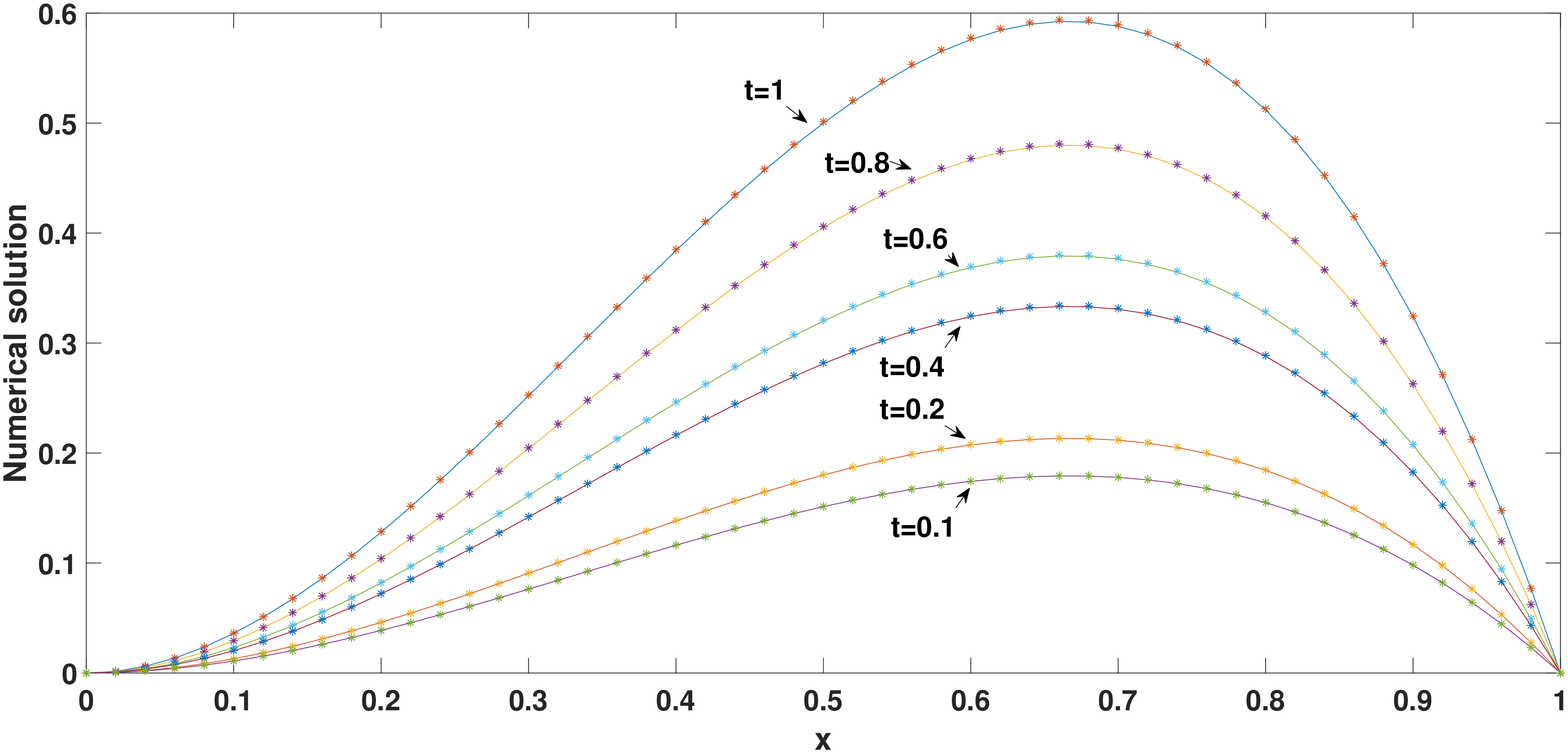}
	\caption{Numerical solution (starred line) and exact solution (solid line) of Example \ref{eg:1} with $\mu=0.9$, $\rho=1.5$ and $N_{\varkappa}=N_{t}=50,$ at different time levels.}\label{fig:3}
\end{figure}
	\begin{figure}
	\centering
	\begin{minipage}{.5\textwidth}
		\centering
		\includegraphics[width=1\linewidth]{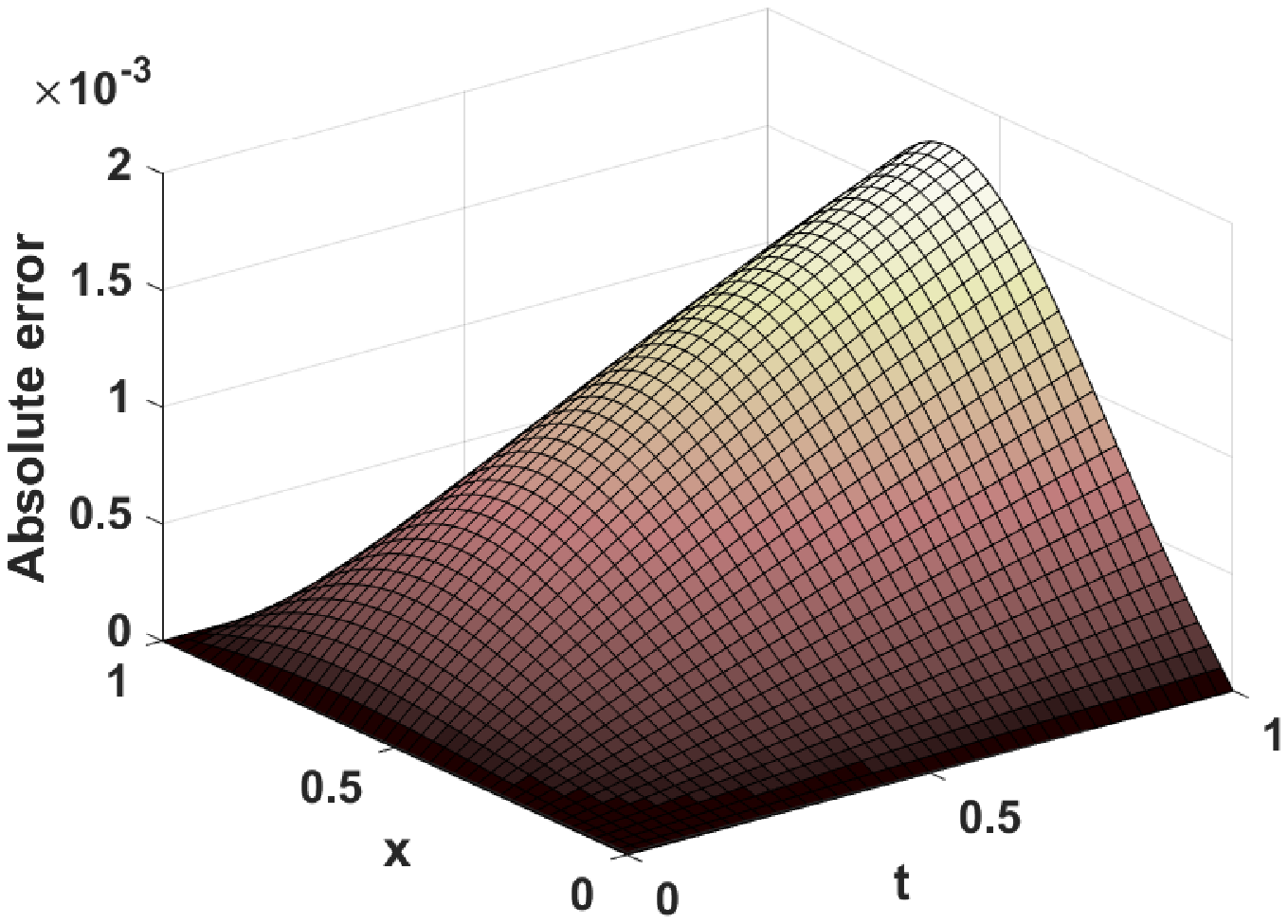}
		{$(a)\hspace{0.1cm} N_{\varkappa}=N_{t}=40$}
	\end{minipage}%
	\begin{minipage}{.5\textwidth}
		\centering
		\includegraphics[width=1\linewidth]{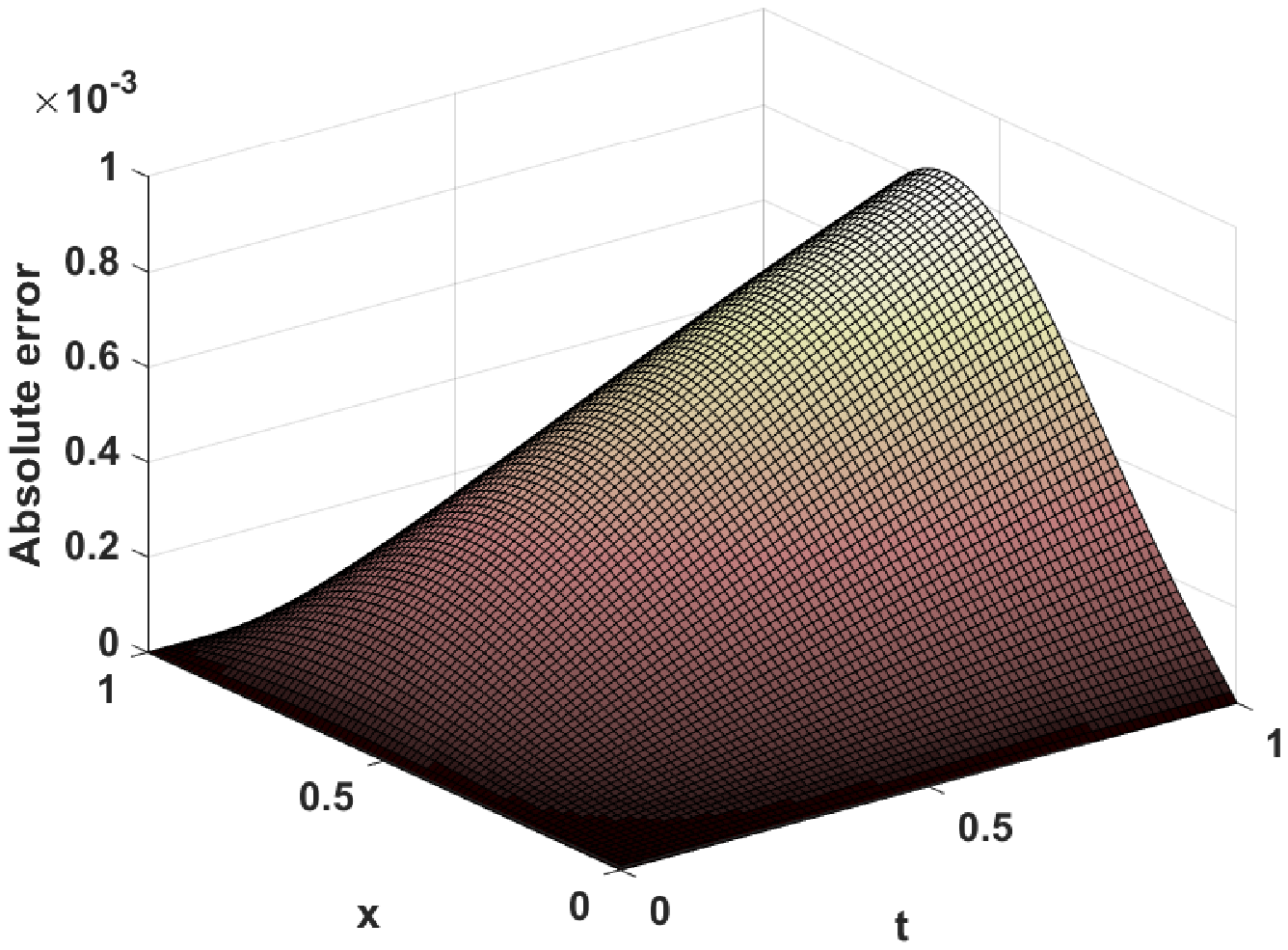}
		{$(b)\hspace{0.1cm} N_{\varkappa}=N_{t}=80$}
	\end{minipage}
\end{figure}
\begin{figure}
	\centering
	\begin{minipage}{.5\textwidth}
		\centering
		\includegraphics[width=1\linewidth]{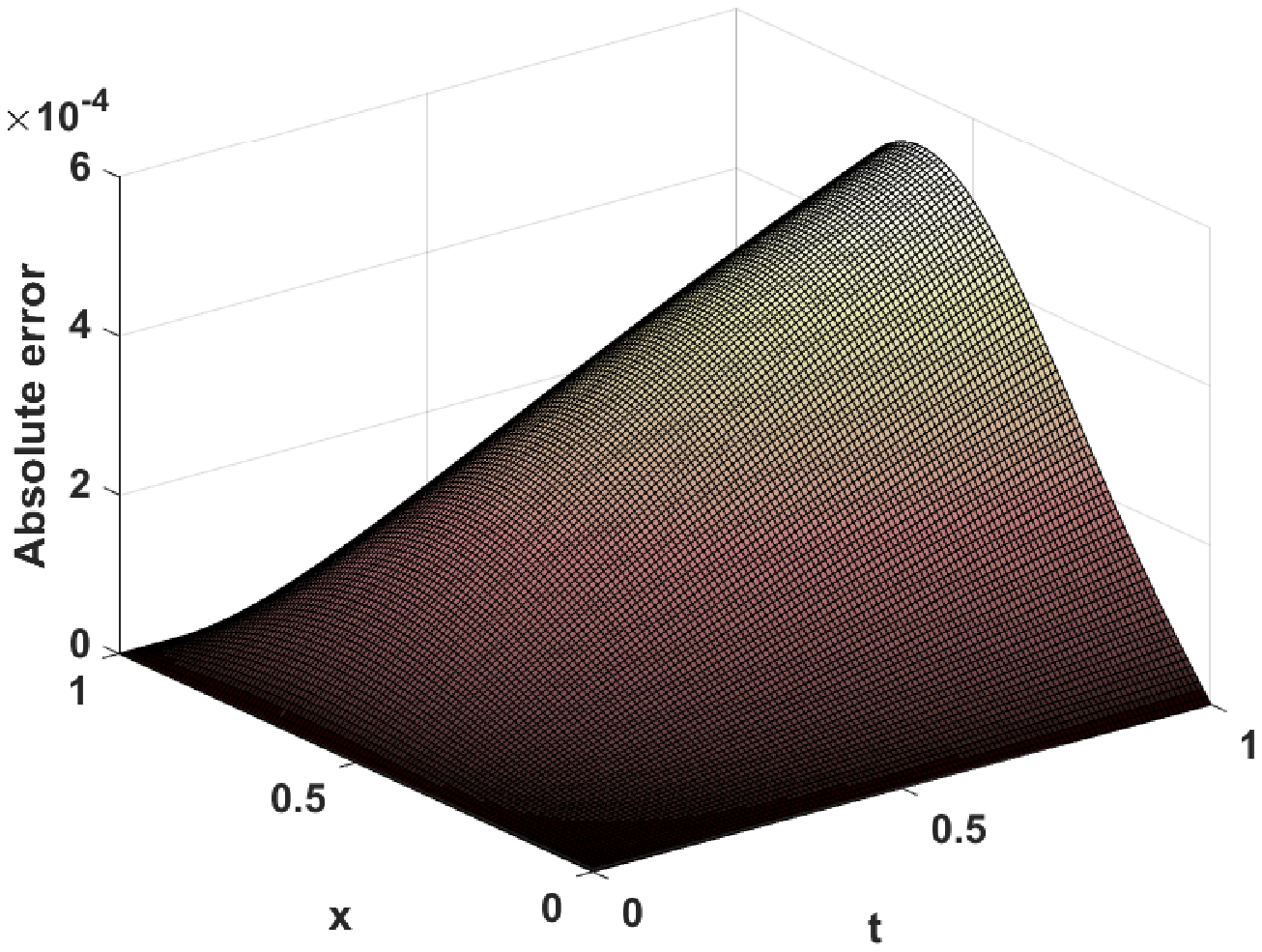}
		{$(c)\hspace{0.1cm} N_{\varkappa}=N_{t}=120$}
	\end{minipage}%
	\begin{minipage}{.5\textwidth}
		\centering
		\includegraphics[width=1\linewidth]{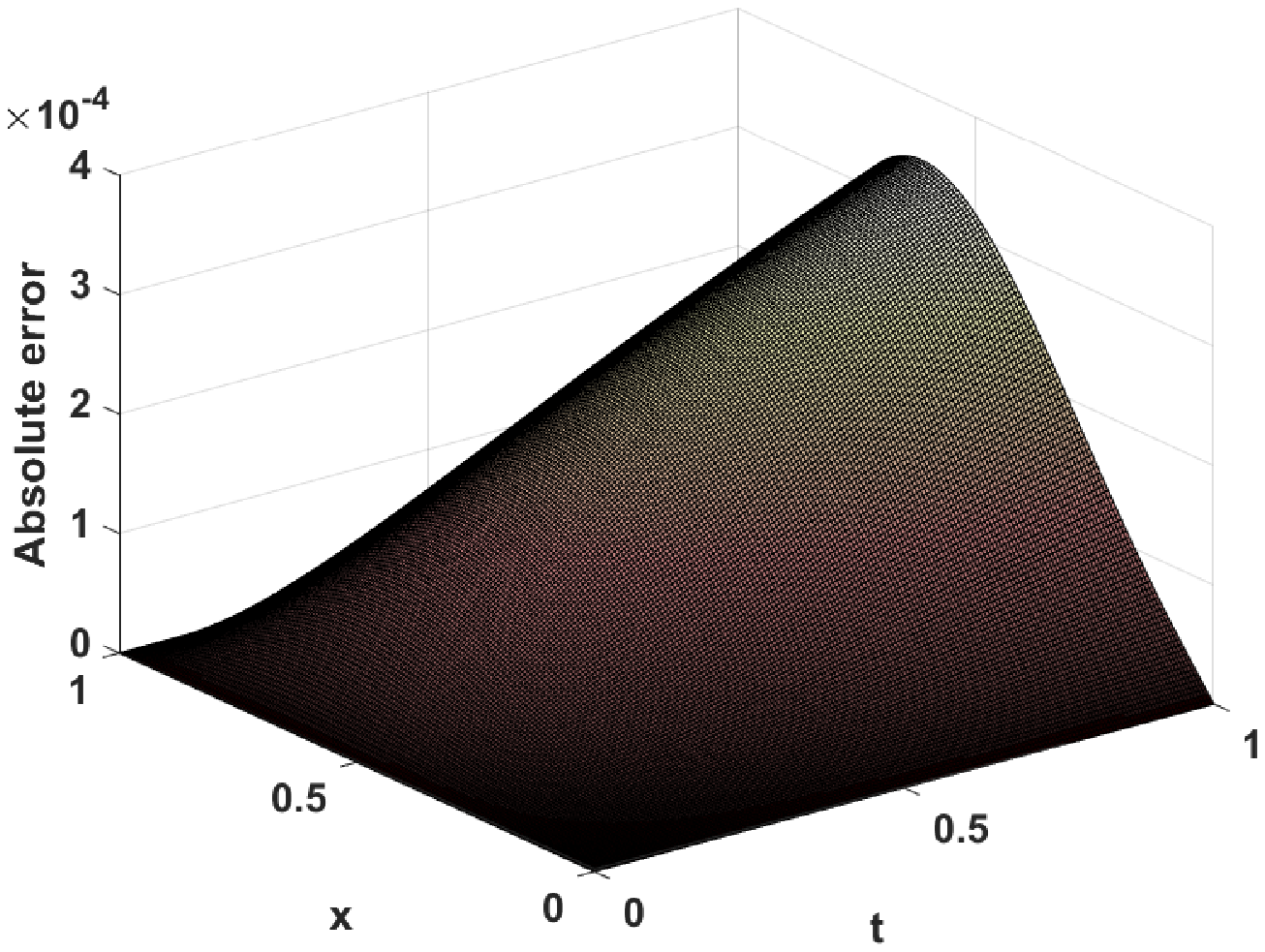}
		{$(d)\hspace{0.1cm} N_{\varkappa}=N_{t}=180$}
	\end{minipage}
	\caption{Maximum absolute error plots for different mesh sizes for Example \ref{eg:1} with $\mu=0.9$ and $\rho=1.5$.}\label{fig:4}
\end{figure}
	\begin{example}\label{eg:2}  \emph{Let us consider the problem (\ref{eq:8})\--(\ref{eq:9}) on the domain  $( 0, 1 ) \times(0,1),$} 
			\begin{equation*}
		{}_{0}^{C}D^{\mu}_{t}u(\varkappa,t)= \alpha\frac{\partial^{2}u(\varkappa,t)}{\partial \varkappa^2}+\beta\frac{\partial u(\varkappa,t)}{\partial \varkappa}-\gamma u(\varkappa,t)+\psi(\varkappa,t), 
		\end{equation*}
		\emph{with}
		\begin{equation*}
		\left\{%
		\begin{array}{ll}
		u(\varkappa,0)=1+\varkappa^2+\varkappa^3,\\	
		u(0,t)=(1+t)^2,\\
		u(1,t)=3(1+t)^2,
		\end{array}%
		\right.~~~~~~~~~~~~~~~~~~~~~~~~~~~~~~~~~~~~~~~~~~~~~~~~~~~~~~~~~~~~~~~~~~~~~~~~~~~~~~~~~~
		\end{equation*}
	\emph{ and the source term}
	\begin{equation*}\psi(\varkappa,t)=\left( \frac{2t^{2-\mu}}{\Gamma(3-\mu)}+\frac{2t^{1-\mu}}{\Gamma(2-\mu)}\right)(1+\varkappa^2+\varkappa^3)-(t+1)^{2}\left(\alpha(6\varkappa+2)+\beta \varkappa(2+3\varkappa)-\gamma(1+\varkappa^2+\varkappa^3)\right).
	\end{equation*}	
	\emph{The exact solution of this test problem is }  $u(\varkappa,t)=(t+1)^{2}(1+\varkappa^2+\varkappa^3)$. \emph{We will solve this problem with pre\--mentioned values of parameters} $r_{f}=0.5$, $\alpha=1$. 
\end{example}
The numerical results obtained by applying the proposed method for Example \ref{eg:2} are given in Tables \ref{5tab:5}, \ref{5tab:6}, \ref{5tab:7}, and \ref{5tab:8}. Tables \ref{5tab:5} and \ref{5tab:6}, give the errors $L_{2}$ and $L_{\infty}$ with the corresponding orders of convergence calculated for $\rho=0.5$, $N_{t}=2500$, and different fractional orders $\mu$. From these tables, we can see that the numerically evaluated spatial order of convergence comes out to be two. Similarly, from Tables \ref{5tab:8} and \ref{5tab:9}, we can see that the errors $L_{2}$ and $L_{\infty}$ decrease as we increase the discretization points $N_{\varkappa}$. Also, these tables show that the temporal order of convergence is $O(h_{t}^{2-\mu})$, which is consistent with Theorem \ref{4:2:thm:3}.
	\begin{table}
	\caption{ {$L_{2}$ error and corresponding order of convergence for various $\mu$ with $\rho=0.5$ and $N_{t}=2500$ for Example \ref{eg:2}.}}
	\begin{tabular}{ |p{2cm} p{2cm} p{2cm} p{2cm} p{2cm} p{2cm} p{2cm}|}		\multicolumn{6}{c}{} \\
		\hline
		$\mu$	&	$N_{\varkappa}$&$2^{2}$&  $2^{3}$&$2^{4}$ & $2^{5}$& $2^{6}$ \\
		\hline
		0.2	&	$L_{2}$&  2.0482e-03 & 5.1071e-04& 1.2755e-04&3.1892e-05&7.9881e-06
		\\
		&	EOC&$\--$&2.0037& 2.0015&1.9998&1.9973\\
		
		0.4	&	$L_{2}$&  2.0425e-03& 5.0936e-04&1.2732e-04& 3.1946e-05& 8.1137e-06\\
		&EOC&$\--$& 2.0036&2.0003&1.9947&1.9772\\
		
		0.6	&	$L_{2}$&2.0413e-03& 5.0980e-04&  1.2819e-04& 3.2936e-05& 9.1312e-06	
		\\
		&	EOC&$\--$& 2.0015&1.9916&1.9606&1.8508
		 \\	
		\hline
	\end{tabular}\label{5tab:5}
\end{table}
\begin{table}
	\caption{ Maximum absolute error $L_{\infty}$ and corresponding order of convergence for various $\mu$ with $\rho=0.5$ and $N_{t}=2500$ for Example \ref{eg:2}.}
	\begin{tabular}{ |p{1.5cm} p{2cm} p{2cm} p{2cm} p{2cm} p{2cm} p{2cm}|}		\multicolumn{7}{c}{} \\
		\hline
		$\mu$	&	$N_{\varkappa}$&$2^{2}$&  $2^{3}$&$2^{4}$& $2^{5}$& $2^{6}$ \\
		\hline
		
		0.2	&	$L_{\infty}$	& 2.7740e-03&  6.9807e-04& 1.7615e-04&4.4043e-05& 1.1038e-05 \\
		&EOC	&$\--$&   1.9905&1.9866& 1.9998&1.9965\\
		0.4	&$L_{\infty}$& 2.7661e-03& 6.9623e-04 &1.7582e-04&4.4114e-05& 1.1209e-05\\
		&EOC&$\--$&1.9902 &1.9855&1.9948&1.9765\\
	0.6	&	$L_{\infty}$&  2.7643e-03&6.9683e-04& 1.7701e-04& 4.5467e-05&1.2603e-05 \\
		&EOC	&$\--$&1.9880&1.9770&1.9610&1.8510 \\	
		\hline
	\end{tabular}\label{5tab:6}
\end{table}
	\begin{table}
	\caption{ $L_{2}$ and $L_{\infty}$ errors and corresponding orders of convergence with $\mu=0.3$, $\rho=0.5$ and $N_{\varkappa}=1000$ for Example \ref{eg:2}.}
	\begin{tabular}{ |p{2cm} p{2cm} p{2cm} p{2cm} p{2cm} p{2cm}  p{2cm}|}		\multicolumn{7}{c}{} \\
		\hline
		$N_{t}$& $10$& $20$&$40$&$80$&$160$&$320$\\
		\hline
		$L_{2}$&    6.1294e-04&   1.9775e-04&   6.3151e-05&   2.0026e-05&   6.3280e-06&   2.0061e-06	\\
	EOC&$\--$&   1.6321&1.6468&1.6570&1.6620&1.6574\\
		$L_{\infty}$	&    8.3939e-04 &   2.7082e-04&   8.6487e-05&   2.7426e-05  & 8.6669e-06&   2.7478e-06
		\\
	EOC	&$\--$	&   1.6320&1.6468&1.6569&1.6620&1.6572\\	
		\hline
	\end{tabular}\label{5tab:7}
\end{table}
	\begin{table}
	\caption{ $L_{2}$ and $L_{\infty}$ errors and corresponding orders of convergence with $\mu=0.7$, $\rho=0.5$ and $N_{\varkappa}=1000$ for Example \ref{eg:2}.}
	\begin{tabular}{ |p{2cm} p{2cm} p{2cm} p{2cm} p{2cm} p{2cm}  p{2cm}|}		\multicolumn{7}{c}{} \\
		\hline
		$N_{t}$& $10$& $20$&$40$&$80$&$160$&$320$\\
		\hline
		$L_{2}$&3.9975e-03&1.6365e-03& 6.6783e-04& 2.7203e-04& 1.1069e-04& 4.5023e-05
			\\
	EOC&$\--$& 1.2885&1.2931&1.2957&1.2972&1.2978\\
		$L_{\infty}$	&5.4792e-03& 2.2432e-03&9.1541e-04& 3.7288e-04& 1.5173e-04& 6.1715e-05
		\\
		EOC	&$\--$	& 1.2884& 1.2931& 1.2957&1.2972&1.2978\\	
		\hline
	\end{tabular}\label{5tab:8}
\end{table}
	\begin{figure}
	\centering
	\includegraphics[width=1\linewidth]{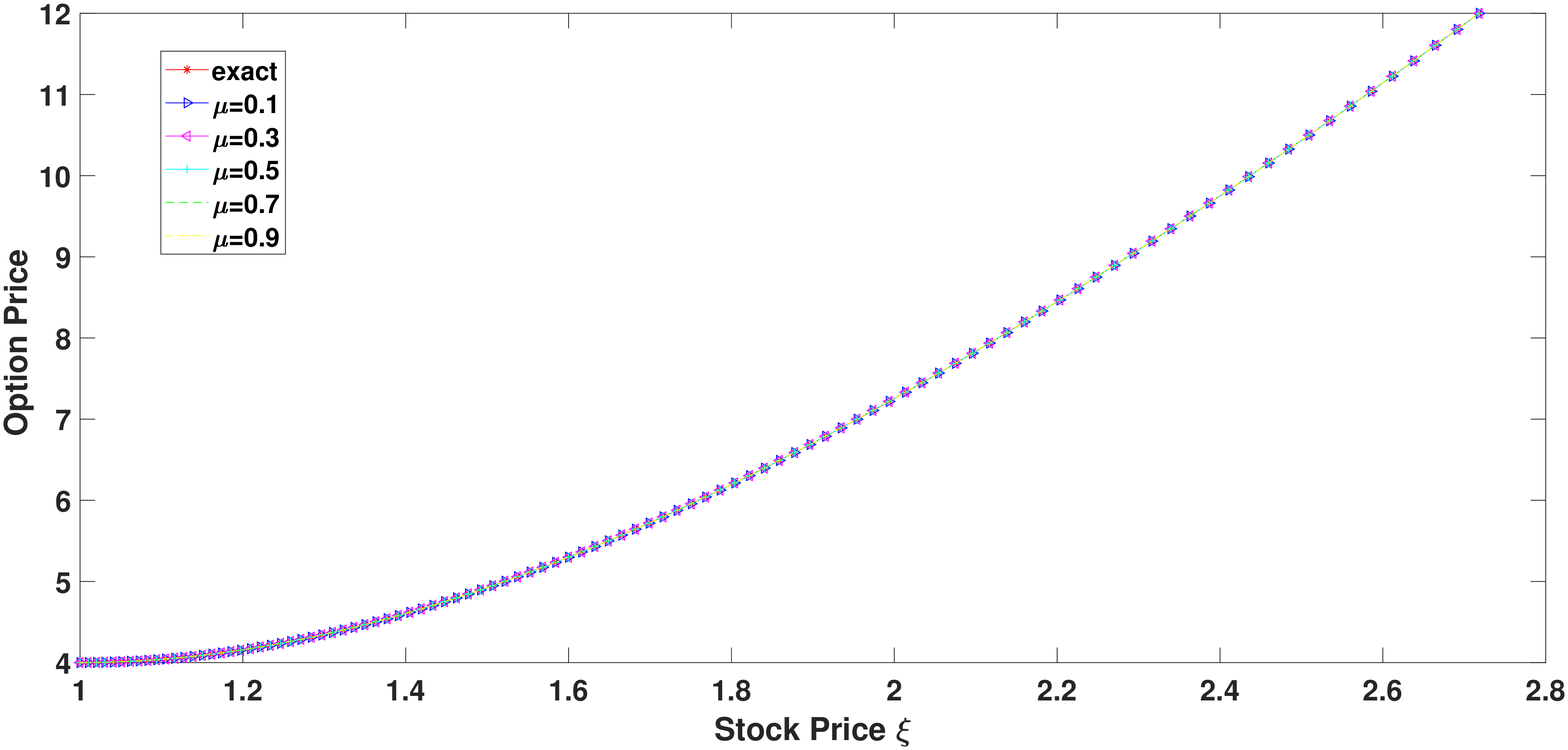}
	\caption{The exact and numerical solutions of Example \ref{eg:2} with $\rho=0.5$ and $N_{\varkappa}=N_{t}=100$.}\label{fig:6}
\end{figure}
	\begin{figure}
	\centering
	\includegraphics[width=1\linewidth]{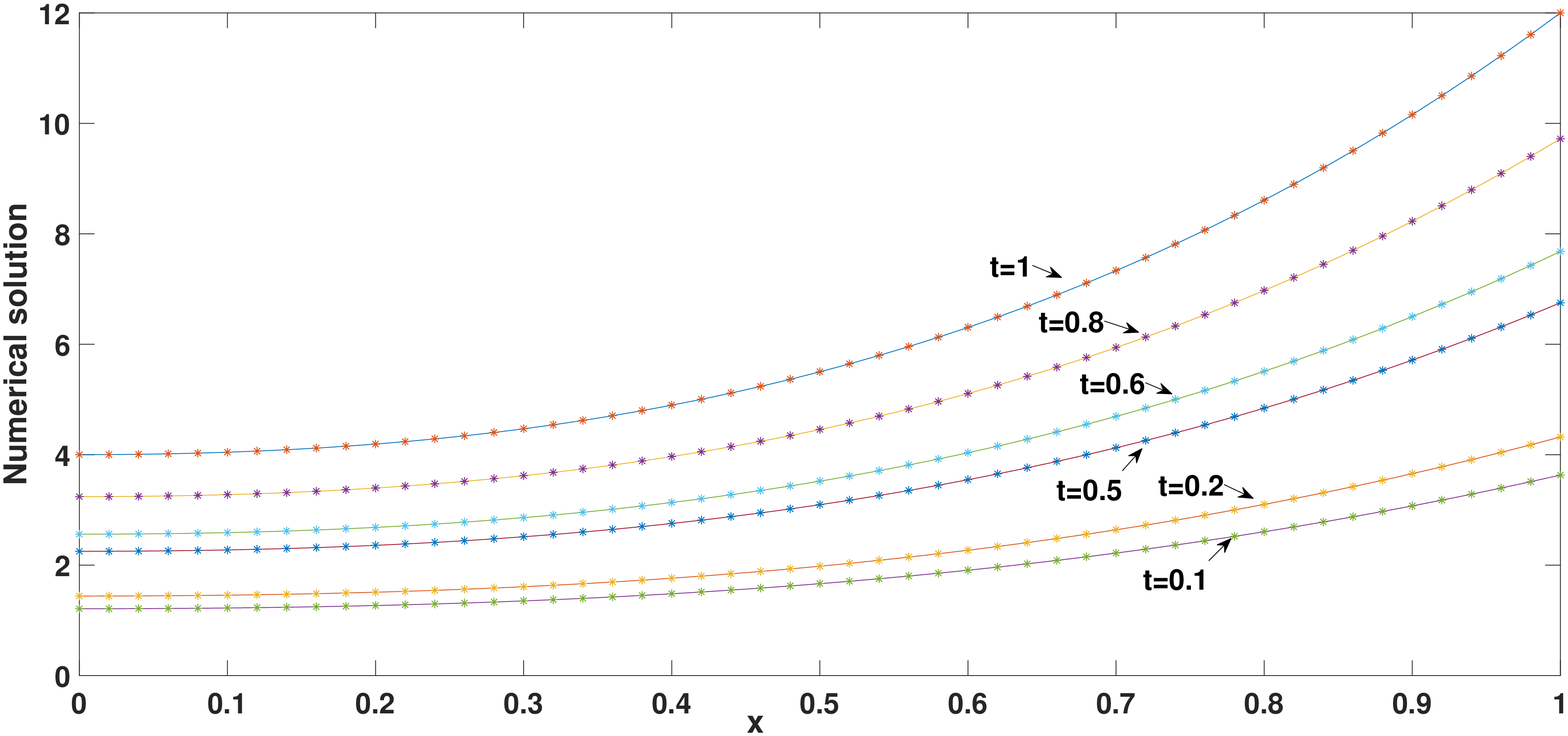}
	\caption{The numerical solution (starred line) and the exact solution (solid line) of Example \ref{eg:2} with $\mu=0.9$, $\rho=0.5$ and $N_{\varkappa}=N_{t}=50,$ at different time $t$.}\label{fig:7}
\end{figure}

\begin{example}\label{eg:3}  \emph{Let us consider the problem (\ref{eq:1})\--(\ref{eq:3}) on the domain  $(0,1) \times(0,1),$}
	\begin{equation*}
	{}_{0}^{C}D^{\mu}_{t}u(\varkappa,t)= \alpha\frac{\partial^{2}u(\varkappa,t)}{\partial \varkappa^2}+\beta\frac{\partial u(\varkappa,t)}{\partial \varkappa}-\gamma u(\varkappa,t)+\psi(\varkappa,t), 
	\end{equation*}
	\emph{with}
	\begin{equation*}
	\left\{%
	\begin{array}{ll}
	u(\varkappa,0)=\varkappa^{4}(\varkappa-1),\\	
	u(0,t)=0,\\
	u(1,t)=0,
	\end{array}%
	\right.~~~~~~~~~~~~~~~~~~~~~~~~~~~~~~~~~~~~~~~~~~~~~~~~~~~~~~~~~~~~~~~~~~~~~~~~~~~~~~~~~~
	\end{equation*} 
	\emph{and the source term}
	\begin{equation*}\psi(\varkappa,t)=\frac{6}{\Gamma(4-\mu)}t^{3-\mu}(\varkappa^{5}-\varkappa^{4})-(t^{3}+1)(4\alpha \varkappa^{2}(5\varkappa-3)+\beta \varkappa^{3}(5\varkappa-4)-\gamma \varkappa^{4}(\varkappa-1)).
	\end{equation*}	
	\emph{The exact solution of this test problem is }  $u(\varkappa,t)=(t^{3}+1)\varkappa^{4}(\varkappa-1)$. \emph{We will solve this problem with pre\--mentioned values of parameters} $r_{f}=0.02$, $\sigma=0.8$.
\end{example}
Tables \ref{5tab:9} and \ref{5tab:10} display $L_{2}$ and $L_{\infty}$ errors with the respective orders of convergence for Example \ref{eg:3} that have been evaluated for the time fractional-orders $\mu=0.5$ and $\mu=0.9$, respectively. One can see that the orders of convergence shown in Tables \ref{5tab:9} and \ref{5tab:10} are sufficiently close to $1.5$ and $1.1$ respectively. So from here, we conclude that the numerically evaluated temporal order of convergence is $O(h_{t}^{2-\mu})$ which is consistent with Theorem \ref{4:2:thm:3}. In a similar manner, the errors $L_{2}$ and $L_{\infty}$ with the respective spatial orders of convergence are tabulated in Tables \ref{5tab:11} and \ref{5tab:12} respectively.
\begin{table}
	\caption{ $L_{2}$ and $L_{\infty}$ errors and corresponding orders of convergence with $\mu=0.5$, $\rho=8.6$ and $N_{\varkappa}=1500$ for Example \ref{eg:3}.}
	\begin{tabular}{ |p{2cm} p{2cm} p{2cm} p{2cm} p{2cm} p{2cm}  p{2cm}|}		\multicolumn{7}{c}{} \\
		\hline
		$N_{t}$& $2^{3}$& $2^{4}$&$2^{5}$&$2^{6}$&$2^{7}$&$2^{8}$\\
		\hline
		$L_{2}$&  3.8050e-04& 1.4288e-04& 5.2529e-05& 1.9074e-05&6.8818e-06&2.4842e-06
		\\
	EOC&$\--$&1.4131&1.4436&1.4615&1.4708&1.4700\\
		$L_{\infty}$	& 5.7987e-04& 2.1766e-04& 7.9979e-05& 2.9009e-05&1.0437e-05& 3.7441e-06\\	
	EOC	&$\--$	&1.4137& 1.4444&1.4631& 1.4747&1.4790\\	
		\hline
	\end{tabular}\label{5tab:9}
\end{table}
\begin{table}
	\caption{ $L_{2}$ and $L_{\infty}$ errors and corresponding orders of convergence with $\mu=0.9$, $\rho=8.6$ and $N_{\varkappa}=1000$ for Example \ref{eg:3}.}
	\begin{tabular}{ |p{2cm} p{2cm} p{2cm} p{2cm} p{2cm} p{2cm}  p{2cm}|}		\multicolumn{7}{c}{} \\
		\hline
		$N_{t}$& $2^{3}$& $2^{4}$&$2^{5}$&$2^{6}$&$2^{7}$&$2^{8}$\\
		\hline
		$L_{2}$&  1.7975e-03&8.6241e-04&4.0834e-04&  1.9201e-04&8.9977e-05&  4.2101e-05\\
		EOC&$\--$ &1.0595&1.0786&1.0886&1.0936&1.0957\\
		$L_{\infty}$	& 2.7482e-03 &  1.3184e-03&6.2410e-04&  2.9340e-04&1.3742e-04& 6.4243e-05\\	
	EOC	&$\--$ &1.0597&1.0789&1.0889&1.0942&1.0970\\	
		\hline
	\end{tabular}\label{5tab:10}
\end{table}
	\begin{table}
	\caption{ {$L_{2}$ error and corresponding order of convergence for various $\mu$ with $\rho=7.4$ and $N_{t}=1000$ for Example \ref{eg:3}.}}
	\begin{tabular}{ |p{2cm} p{2cm} p{2cm} p{2cm} p{2cm} p{2cm} p{2cm}|}		\multicolumn{6}{c}{} \\
		\hline
		$\mu$	&	$N_{\varkappa}$&$2^{3}$&  $2^{4}$&$2^{5}$ & $2^{6}$& $2^{7}$ \\
		\hline
		0.2	&	$L_{2}$&3.6198e-03&9.8464e-04& 2.5136e-04& 6.3176e-05& 1.5823e-05
		\\
		&EOC&$\--$& 1.8782&1.9699& 1.9923&  1.9973 \\	
		0.4	&	$L_{2}$& 3.5812e-03& 9.7332e-04& 2.4846e-04& 6.2494e-05&   1.5704e-05
		\\
		&EOC&$\--$&1.8794  &1.9699&1.9912&1.9926\\
		
		0.6	&	$L_{2}$& 3.5451e-03& 9.6305e-04&2.4606e-04& 6.2173e-05&1.5916e-05
		\\
		&	EOC&$\--$&1.8801&1.9686&1.9847&1.9658 \\
%
		\hline
	\end{tabular}\label{5tab:11}
\end{table}
	\begin{table}
	\caption{ Maximum absolute error $L_{\infty}$ and corresponding order of convergence for various $\mu$ with $\rho=8.6$ and $N_{t}=1000$ for Example \ref{eg:3}.}
	\begin{tabular}{ |p{1.5cm} p{2cm} p{2cm} p{2cm} p{2cm} p{2cm} p{2cm}|}		\multicolumn{7}{c}{} \\
		\hline
		$\mu$	&	$N_{\varkappa}$&$2^{3}$&  $2^{4}$&$2^{5}$& $2^{6}$& $2^{7}$ \\
		\hline
		0.2	&	$L_{\infty}$	& 5.9815e-03 &   1.5523e-03&   4.0375e-04&  1.0171e-04& 2.5489e-05 \\
		&EOC	&$\--$& 1.9461&  1.9429& 1.9890&1.9965\\
		0.4	&$L_{\infty}$&   5.9791e-03& 1.5404e-03& 3.9964e-04&1.0075e-04&2.5332e-05\\
		&EOC	&$\--$& 1.9566& 1.9465& 1.9879&1.991\\
		0.6	&	$L_{\infty}$	&     5.9723e-03
		 & 1.5384e-03& 3.9623e-04& 1.0035e-04&  2.5693e-05
		 \\
		&EOC	&$\--$&  1.9569& 1.9570& 1.9813&1.9656 \\	
		\hline
	\end{tabular}\label{5tab:12}
\end{table}
	\begin{figure}
		\centering
		\includegraphics[width=1\linewidth]{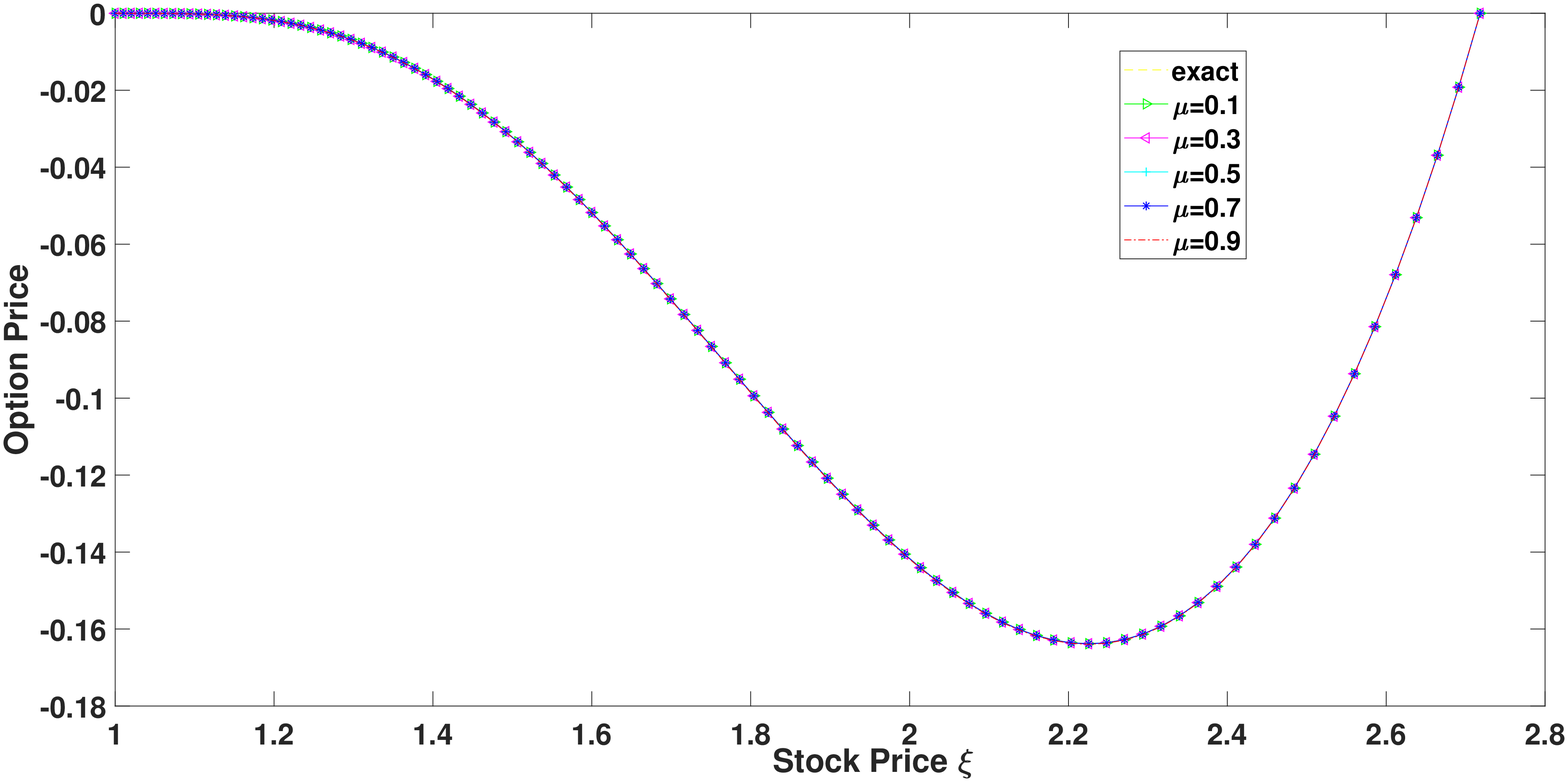}
		\caption{Th exact and numerical solutions of Example \ref{eg:3} with $\rho=8.6$ and $N_{\varkappa}=N_{t}=100$.}\label{fig:9}
	\end{figure}
	\begin{figure}
		\centering
		\includegraphics[width=1\linewidth]{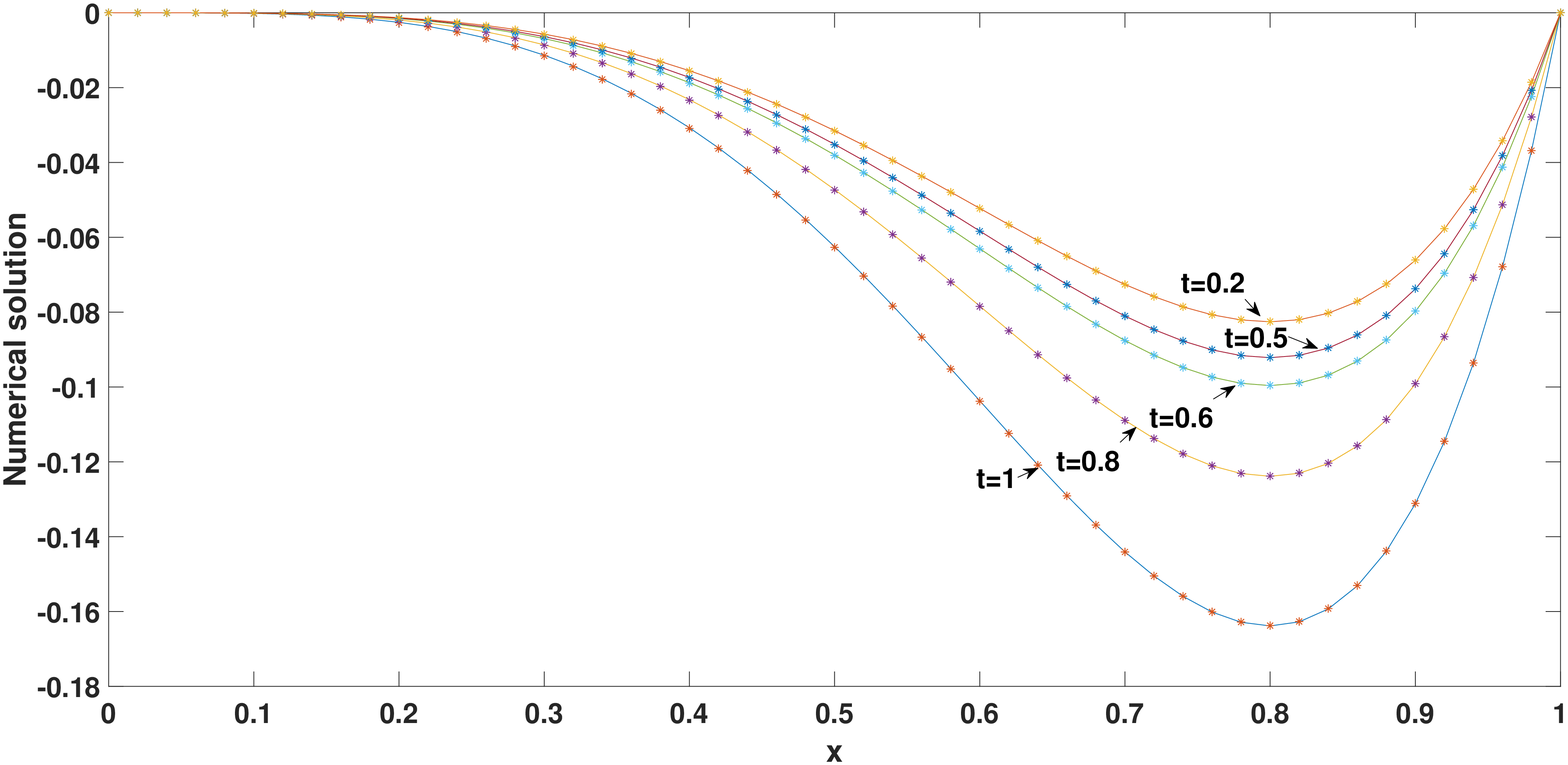}
		\caption{The numerical solution (starred line) and exact the solution (solid line) of Example \ref{eg:3} with $\mu=0.5$, $\rho=8.6$ and $N_{\varkappa}=N_{t}=50$ at different time $t$.}\label{fig:10}
	\end{figure}
	\begin{figure}
	\centering
	\begin{minipage}{.5\textwidth}
		\centering
		\includegraphics[width=1\linewidth]{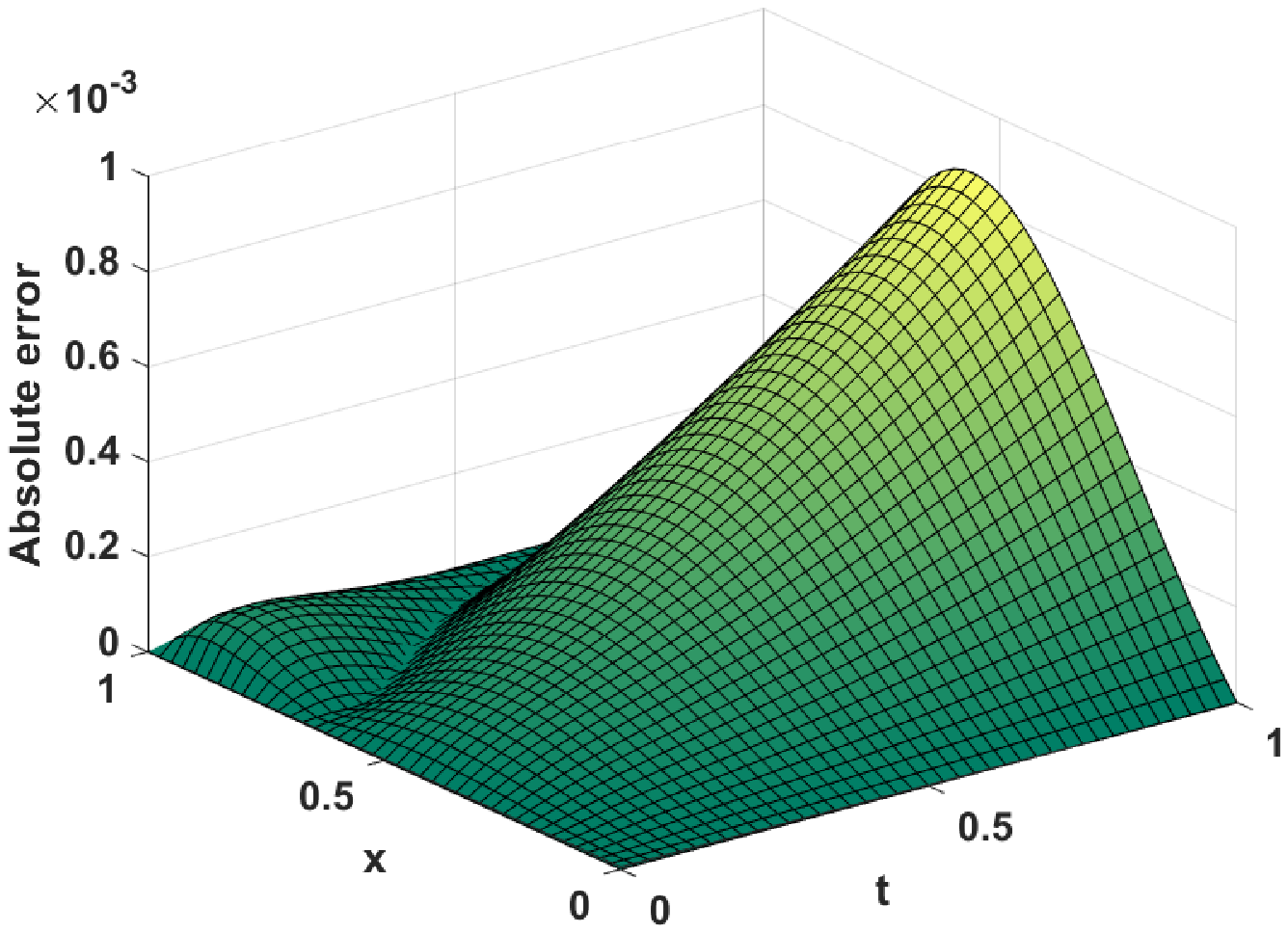}
		{$(a)\hspace{0.1cm} N_{\varkappa}=N_{t}=40$}
	\end{minipage}%
	\begin{minipage}{.5\textwidth}
		\centering
		\includegraphics[width=1\linewidth]{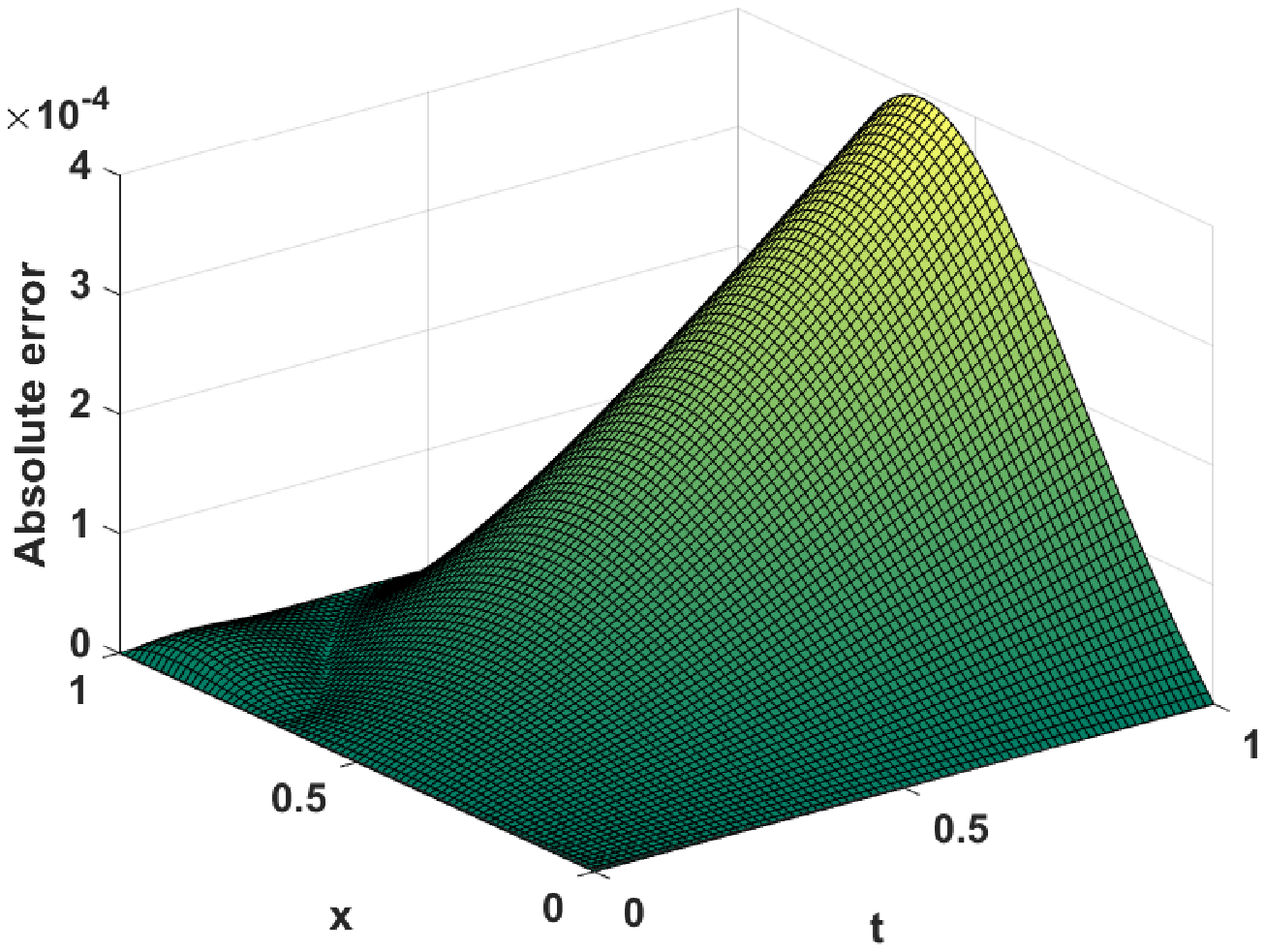}
		{$(b)\hspace{0.1cm} N_{\varkappa}=N_{t}=80$}
	\end{minipage}
\end{figure}
\begin{figure}
	\centering
	\begin{minipage}{.5\textwidth}
		\centering
		\includegraphics[width=1\linewidth]{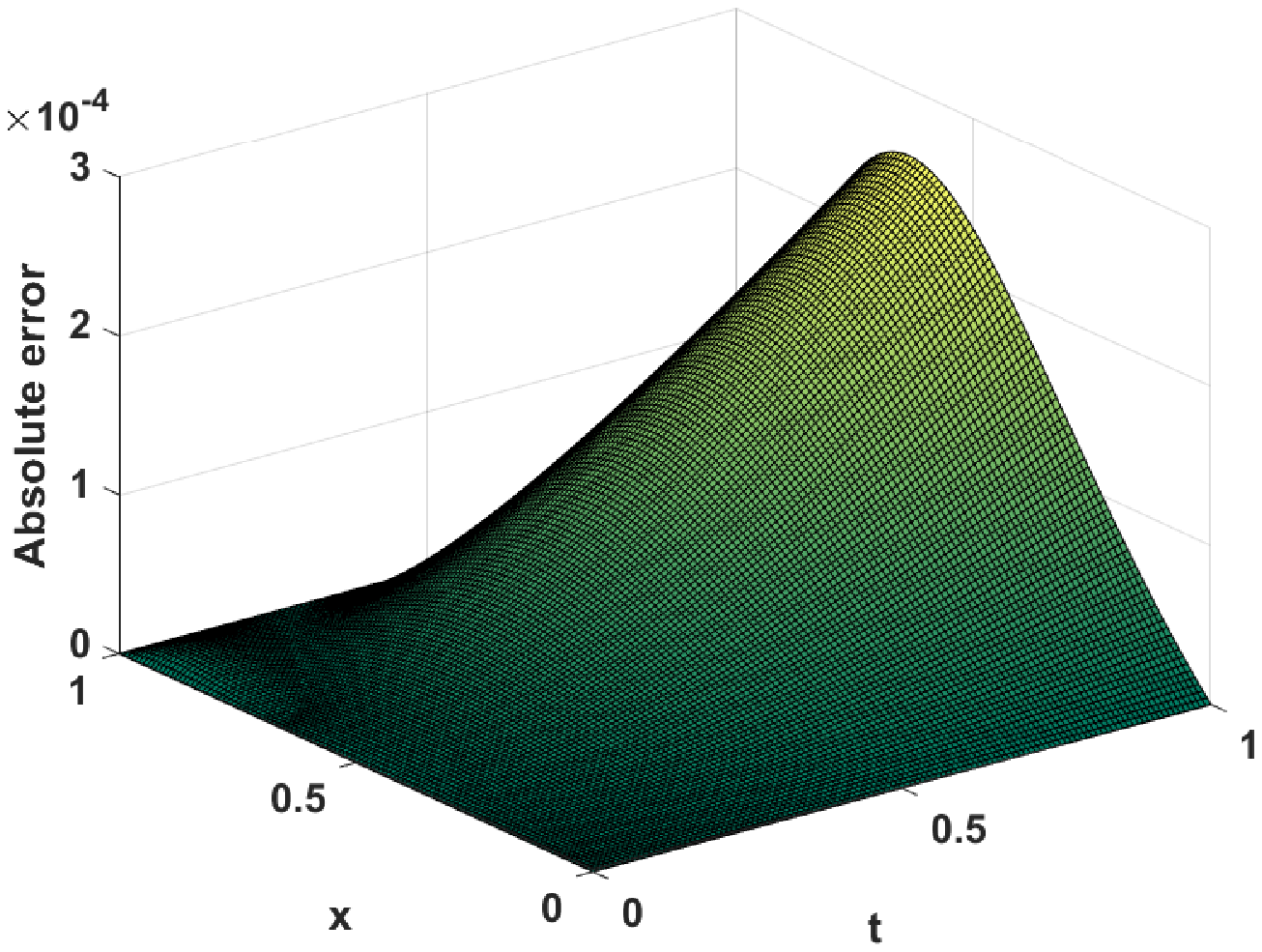}
		{$(c)\hspace{0.1cm} N_{\varkappa}=N_{t}=120$}
	\end{minipage}%
	\begin{minipage}{.5\textwidth}
		\centering
		\includegraphics[width=1\linewidth]{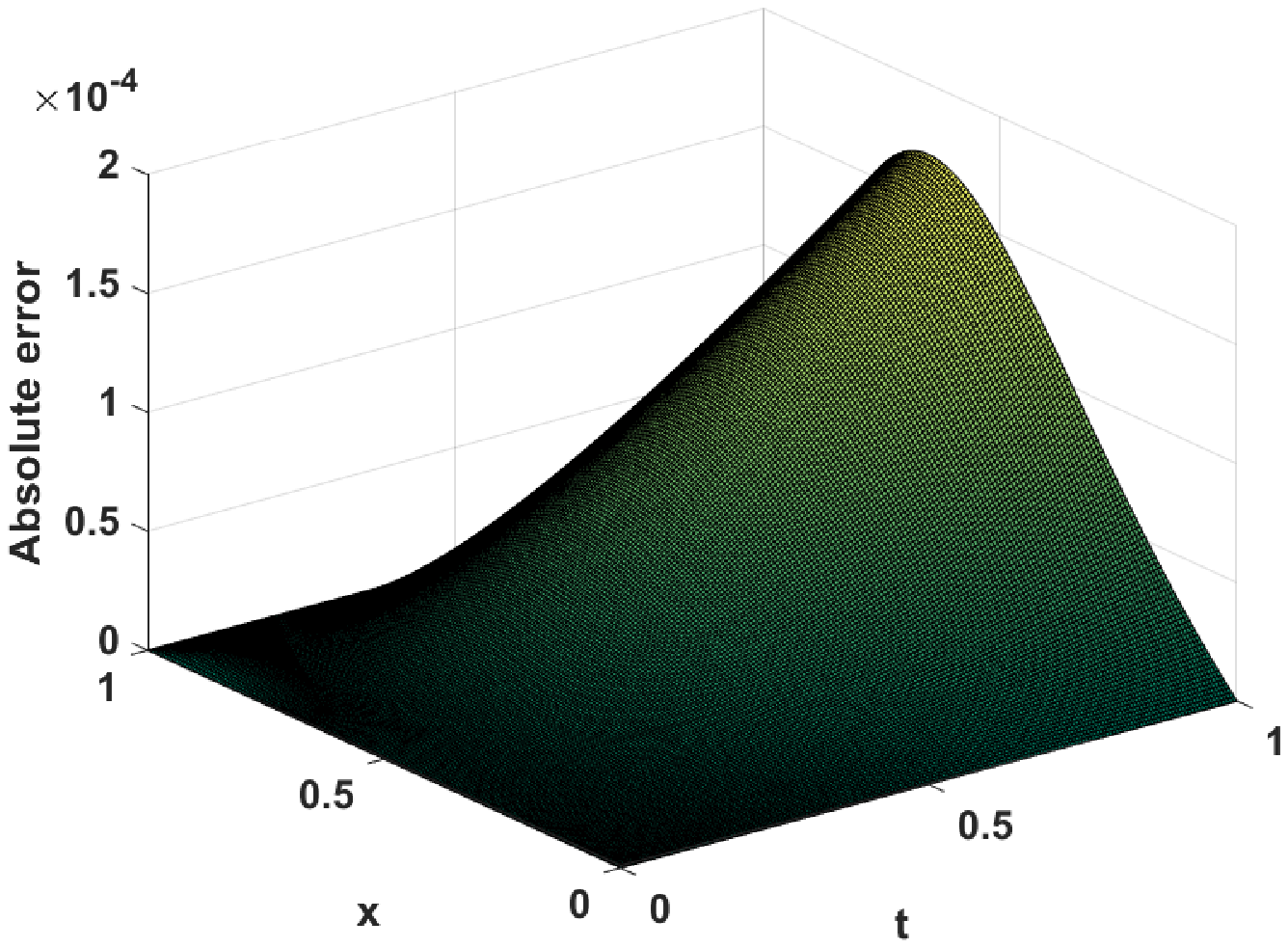}
		{$(d)\hspace{0.1cm} N_{\varkappa}=N_{t}=160$}
	\end{minipage}
	\caption{Maximum absolute error plots for different mesh sizes for Example \ref{eg:3} with $\mu=0.99$ and $\rho=8.6.$}\label{fig:11}
\end{figure}
	\begin{example}\label{eg:4}  \emph{Let us consider the TFBSM \cite{hang2016numerical1} } 
		\begin{equation}
	\frac{\partial^{\mu} \mathscr{V}(\xi,\tau)}{\partial \tau^{\mu}} +\frac{\sigma^{2}\xi^{2}}{2}\frac{\partial^{2} \mathscr{V}(\xi,\tau)}{\partial \xi^2}+(r_{f}-D_{Y})\xi\frac{\partial  \mathscr{V}(\xi,\tau)}{\partial \xi}-r_{f}  \mathscr{V}(\xi,\tau)=0, \hspace{0.3cm} (\xi,\tau)\in (\xi_{I_{p}},\xi_{F_{p}}) \times (0,\widetilde{T}),
	\end{equation}
	\begin{equation*}
	\left\{%
	\begin{array}{ll}
\mathscr{V}(\xi,\widetilde{T})=\phi(\xi),\\	
	\mathscr{V}( \xi_{I_{p}},\tau)=\mathscr{H}(\tau),\\
	\mathscr{V}( \xi_{F_{p}},\tau)=\mathscr{G}(\tau),
	\end{array}%
	\right.~~~~~~~~~~~~~~~~~~~~~~~~~~~~~~~~~~~~~~~~~~~~~~~~~~~~~~~~~~~~~~~~~~~~~~~~~~~~~~~~~~
	\end{equation*}
\end{example}
The above model describes different option values depending on the functions $\phi(\xi)$, $\mathscr{H}(\tau)$, and $\mathscr{G}(\tau)$.
\begin{itemize}
	\item {If
		$\phi(\xi) = \max\{\widetilde{K}-\xi,0\}$, $\mathscr{H}(\tau) = \widetilde{K}e^{-r_{f}(\widetilde{T}-\tau)}$ and $\mathscr{G}(\tau) =D_{Y}=0$, then the model represents the European put option.}
	\item {If $\phi(\xi) = \max\{\xi-\widetilde{K} , 0\}$, $\mathscr{H}(\tau) = D_{Y} = 0$ and $\mathscr{G}(\tau) =\xi_{I_{f}}-\widetilde{K}e^{-r_{f}(\widetilde{T}-\tau)},$ then the model represents the European call option.}
	\item { And if $\phi(\xi) = \max\{\xi-\widetilde{K} , 0\}$ and $\mathscr{H}(\tau)=\mathscr{G}(\tau)= 0$, then the model describes the European double barrier knock-out call option.}
\end{itemize}
 For solving European call and European put option models numerically we have taken the parameters $\sigma = 0.55$, $r_{f} = 0.05$,
$\xi_{I_{p}}= 0.1(I_{p} = -2.3)$, $\xi_{F_{p}} = 100(F_{p} = 4.6)$, $\widetilde{T} = 1$(year), and the strike price $\widetilde{K} = 50$. Also, we take the parameters $\sigma=0.55$, $r_{f} = 0.03$, $\xi_{I_{p}}= 3(I_{p} = 1.1)$, $\xi_{F_{p}}= 15(F_{p} = 2.7)$, $\widetilde{T} = 1$(year), the dividend yield $D_{Y} = 0.01$, and the strike price $\widetilde{K}= 10,$ to study the European double barrier knock-out call option model numerically.

Figures \ref{fig:12} and \ref{fig:13} show how the orders of fractional derivative affect the European call option and put option, respectively. From these two figures, we can observe that when the stock price $\xi$ is less or greater than the strike price $\widetilde{K}$, the option price is slightly affected by the order of the time-fractional derivative. And when the stock price $\xi$ is close to the strike price $\widetilde{K}$, the option price is significantly affected by the time-fractional derivative order. Further, Figure \ref{fig:14} show how the different parameters affect the European put option price governed by TFBSM. From Figure \ref{fig:14}$(a)$, it can be observed that the rate of interest and the options are inversely proportional, i.e the higher the rate of interest, the lower the option. Similarly, Figure \ref{fig:14}$(b)$ reflects, how the stock price volatility influences the option price and supports a well-known statement \textquotedblleft The higher the risk, the higher the return".
In Figure \ref{fig:15}, the graph is plotted between the stock price $\xi$ and the double barrier option price $\mathscr{V}$ for various values of time-fractional order $\mu$. It is worth noting here that for $\mu=1$, the TFBSM converts to the classical Black-Scholes model. This Figure \ref{fig:15} depicts that the double barrier option price is highly influenced by the time-fractional order. More specifically, the option price is inversely proportional to the fractional-order when the stock price is greater than or near the strike price $\widetilde{K}$. And the peak of the option price curve occurred corresponding to $\mu=0.2$. This tells us that TFBSM can explain the jump movement of the problem much more clearly than the classic Black\--Scholes model.
\begin{figure}
	\centering
	\includegraphics[width=1\linewidth]{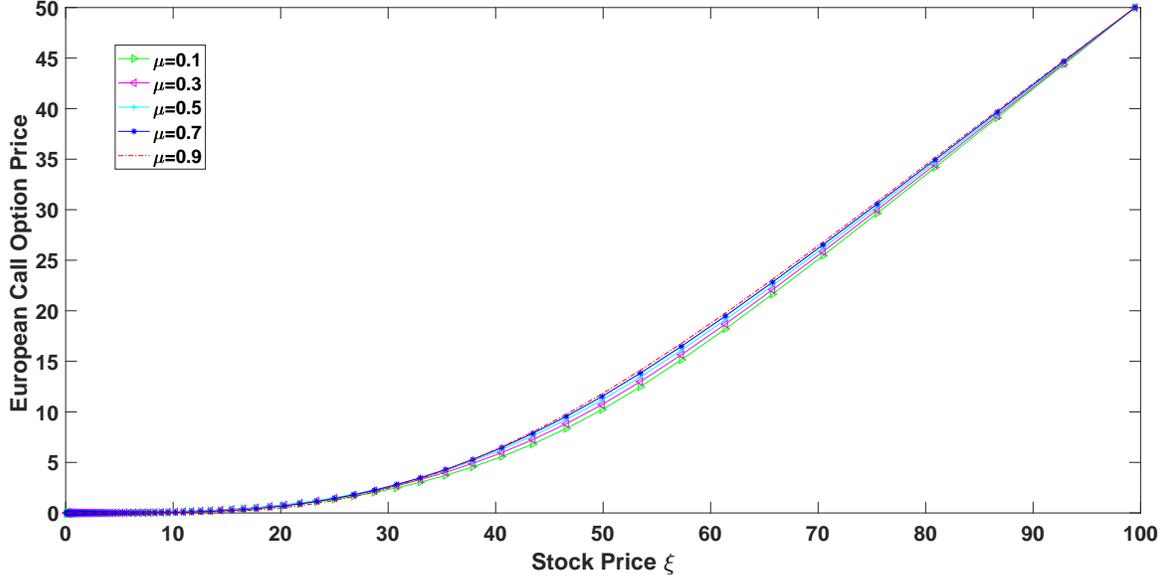}
	\caption{European call option curves with $\rho=1.5$, $N_{\varkappa}=N_{t}=100$ and different $\mu$  for Example \ref{eg:4}.}\label{fig:12}
\end{figure}
\begin{figure}
	\centering
	\includegraphics[width=1\linewidth]{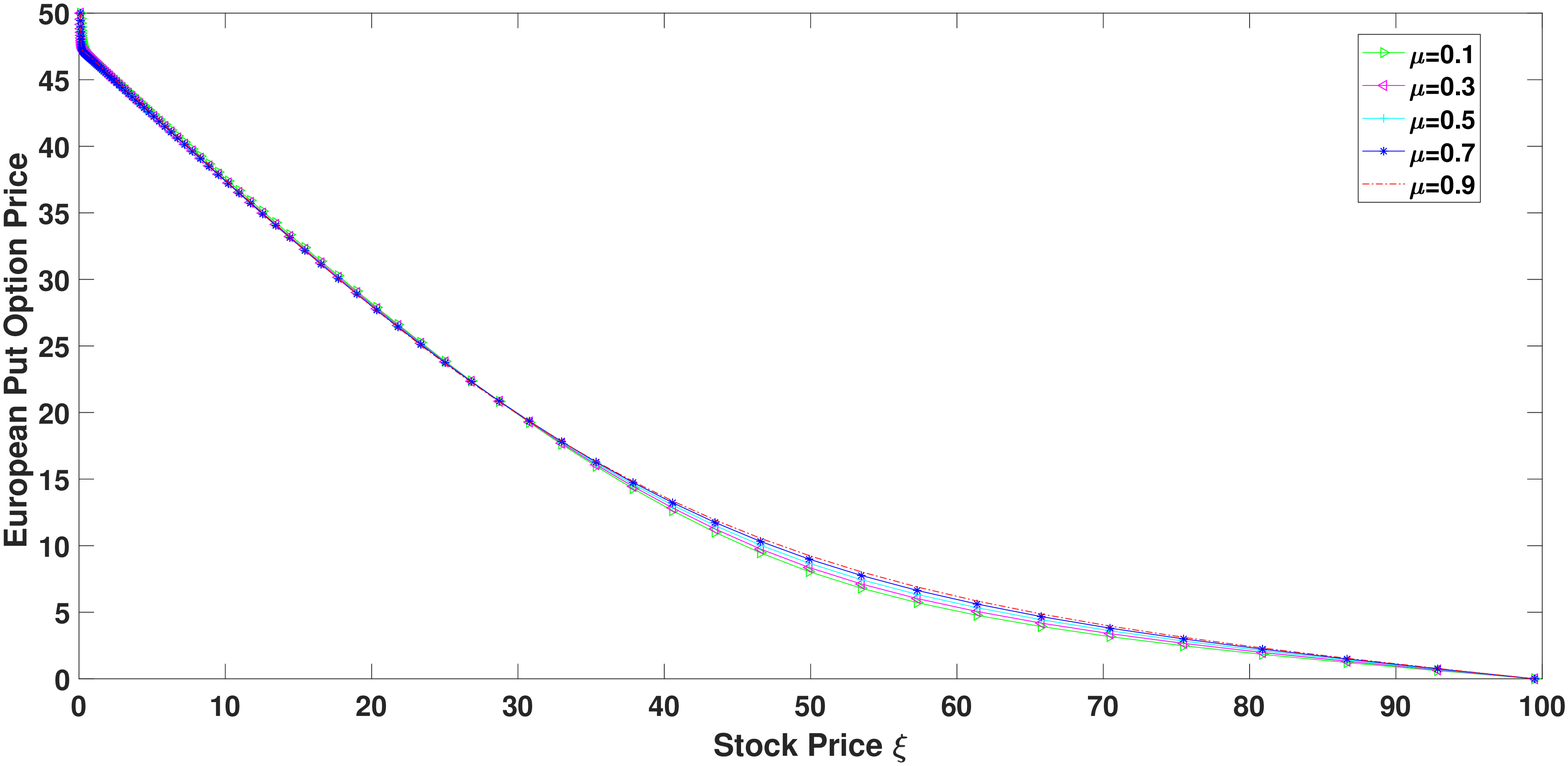}
	\caption{European put option curves with $\rho=1.5$, $N_{\varkappa}=N_{t}=100$ and different $\mu$  for Example \ref{eg:4}.}\label{fig:13}
\end{figure}
\begin{figure}
	\centering
	\begin{minipage}{.5\textwidth}
		\centering
		\includegraphics[width=1\linewidth]{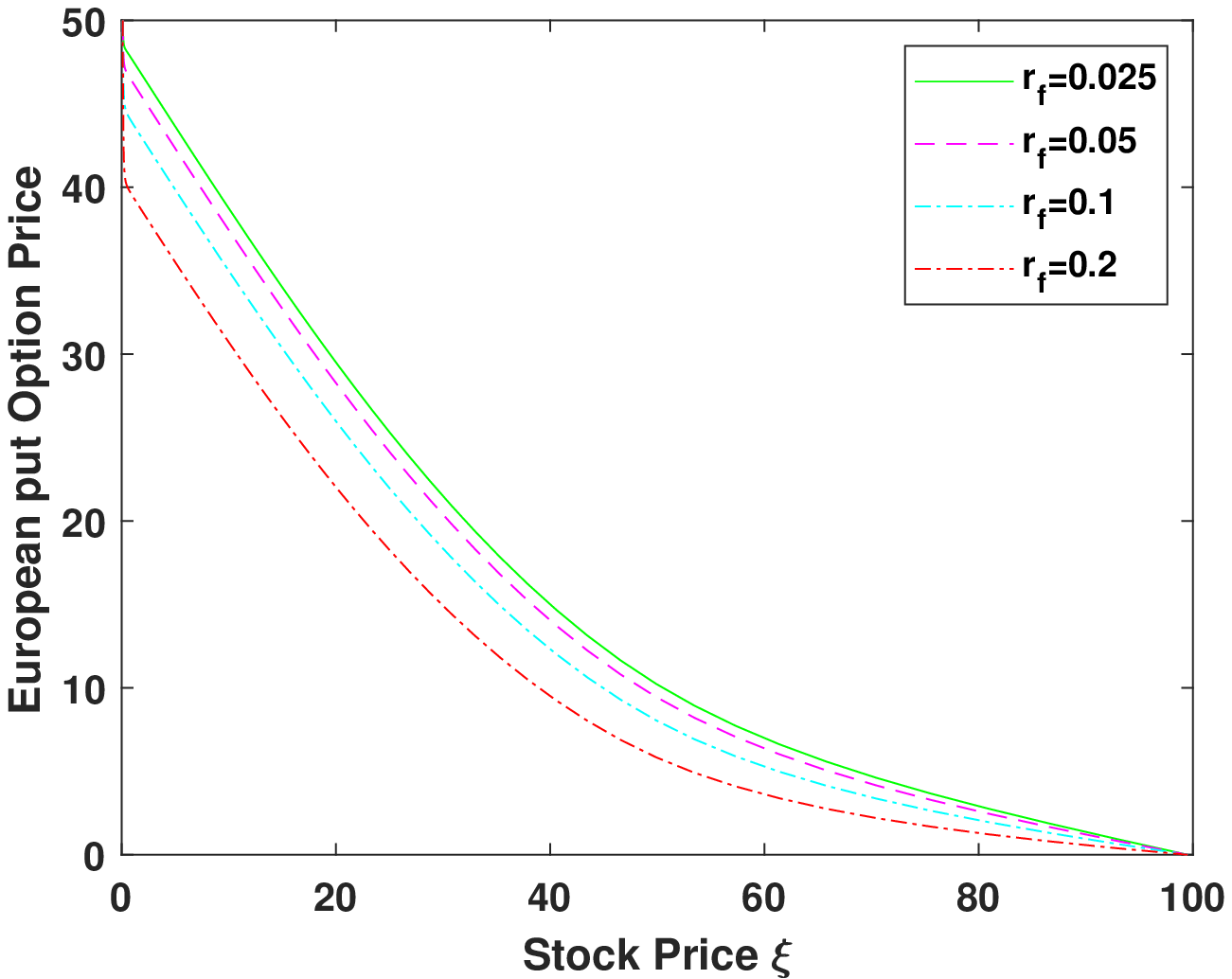}
		{$(a)\hspace{0.1cm} \sigma=0.6, \widetilde{K}=50, T=1$}
	\end{minipage}%
	\begin{minipage}{.5\textwidth}
		\centering
		\includegraphics[width=1\linewidth]{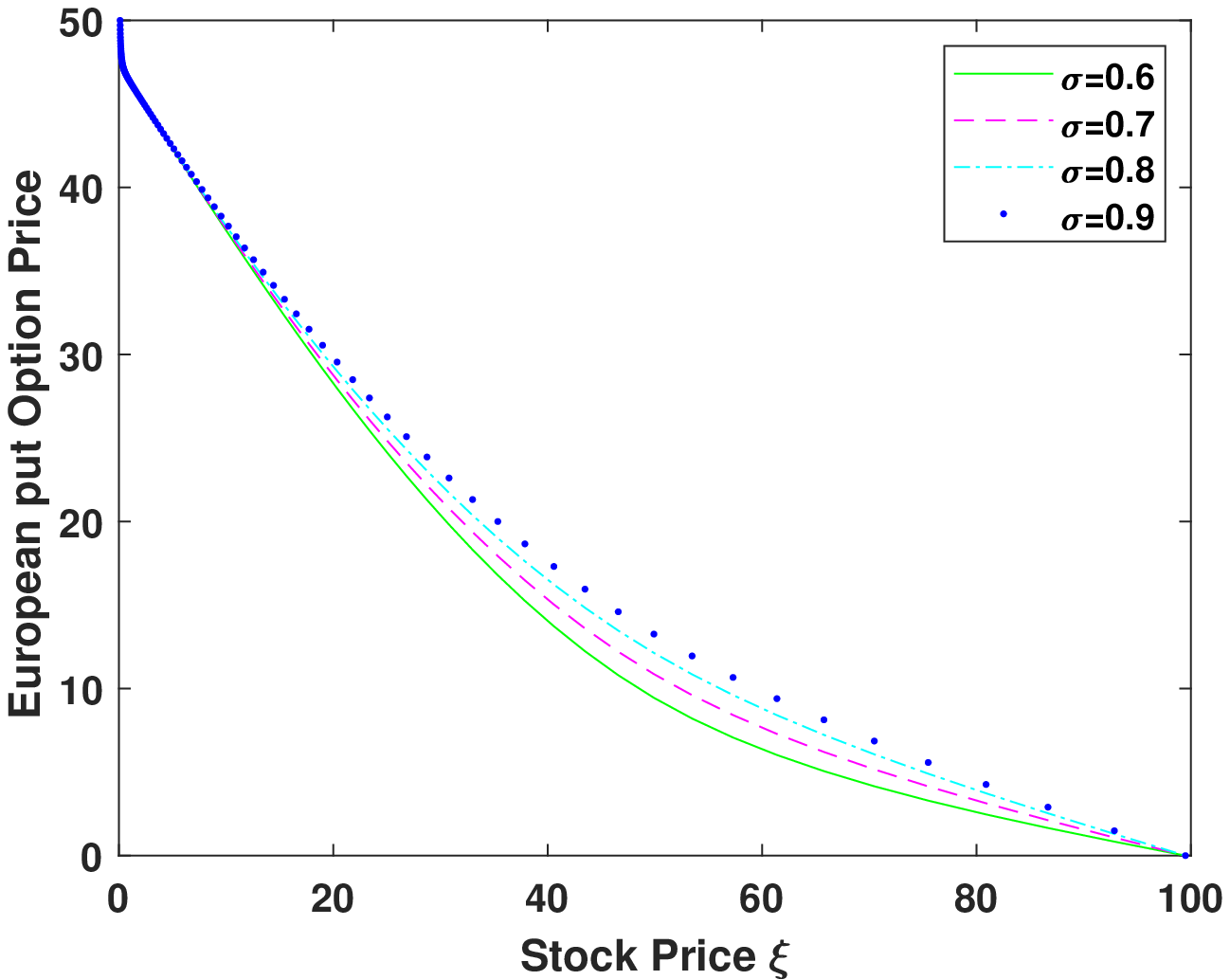}
		{$(b)\hspace{0.1cm} r_{f}=0.05, \widetilde{K}=50, T=1$}
	\end{minipage}
	\caption{European put option curves with different values of parameters for Example \ref{eg:4}.}\label{fig:14}
\end{figure}
\begin{figure}
	\centering
	\includegraphics[width=1\linewidth]{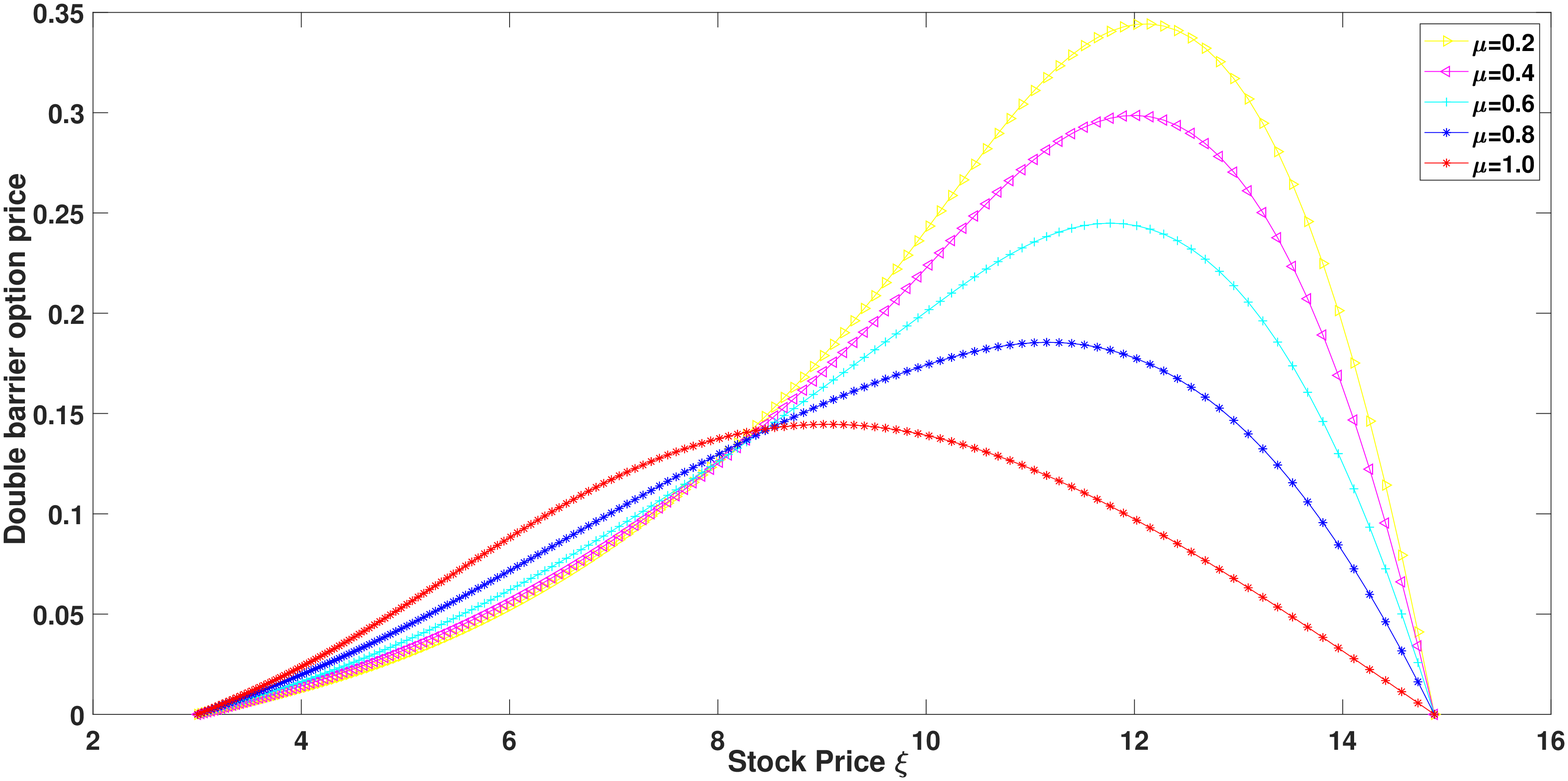}
	\caption{Double barrier option curves with $\rho=1.5$, $N_{\varkappa}=N_{t}=150$ and different $\mu$  for Example \ref{eg:4}.}\label{fig:15}
\end{figure}

\section{Conclusion}\label{sec:6}
\noindent In this paper, an efficient collocation method based on exponential B-spline functions is introduced to solve the TFBSM governing European options. First, we have changed the modified R-L fractional derivative operator to the Caputo fractional derivative operator by applying the variable transformation, then used the exponential B\--spline functions to discretize the space derivative and a finite difference method to discretize the Caputo fractional derivative. As a result of the use of the exponential B-spline collocation method, a tri-diagonal algebraic system has been obtained that can be solved by the Thomas algorithm. Furthermore, the proposed numerical scheme has been shown to be unconditionally stable via the von-Neumann method. The method is implemented on a number of numerical examples. And the obtained results confirm that the method is capable of approximating the TFBSM. In addition, as an application, the numerical scheme proposed for the TFBSM has been used to price several different European options and it has been observed that the order of the time-fractional derivative has a great impact on the option prices. Since the proposed scheme works well for the TFBSM, so it is our intention to extend this idea to solve other fractional problems numerically.

\section*{Declarations}
\noindent \textbf{Conflict of interest:} The authors declare that they have no known competing financial interests or personal relationships that could have appeared to influence the work reported in this paper.

\bibliographystyle{elsarticle-num}
\bibliography{references5}

\begin{thebibliography}{10}
\expandafter\ifx\csname url\endcsname\relax
  \def\url#1{\texttt{#1}}\fi
\expandafter\ifx\csname urlprefix\endcsname\relax\def\urlprefix{URL }\fi
\expandafter\ifx\csname href\endcsname\relax
  \def\href#1#2{#2} \def\path#1{#1}\fi

\bibitem{black2019pricing}
F.~Black, M.~Scholes, The pricing of options and corporate liabilities, in:
  World Scientific Reference on Contingent Claims Analysis in Corporate
  Finance: Volume 1: Foundations of CCA and Equity Valuation, World Scientific,
  2019, pp. 3--21.

\bibitem{merton1973theory}
R.~C. Merton, Theory of rational option pricing, The Bell Journal of economics
  and management science (1973) 141--183.

\bibitem{rodrigo2006alternative}
M.~R. Rodrigo, R.~S. Mamon, An alternative approach to solving the
  {B}lack-{S}choles equation with time-varying parameters, Applied Mathematics
  Letters 19~(4) (2006) 398--402.

\bibitem{bohner2009analytical}
M.~Bohner, Y.~Zheng, On analytical solutions of the {Black--Scholes} equation,
  Applied Mathematics Letters 22~(3) (2009) 309--313.

\bibitem{amster2002solutions}
P.~Amster, C.~Averbuj, M.~Mariani, Solutions to a stationary nonlinear
  {Black--Scholes} type equation, Journal of Mathematical Analysis and
  Applications 276~(1) (2002) 231--238.

\bibitem{amster2003stationary}
P.~Amster, C.~Averbuj, M.~Mariani, Stationary solutions for two nonlinear
  {Black--Scholes} type equations, Applied Numerical Mathematics 47~(3-4)
  (2003) 275--280.

\bibitem{company2008numerical}
R.~Company, E.~Navarro, J.~R. Pintos, E.~Ponsoda, Numerical solution of linear
  and nonlinear black--scholes option pricing equations, Computers \&
  Mathematics with Applications 56~(3) (2008) 813--821.

\bibitem{company2009numerical}
R.~Company, L.~J{\'o}dar, J.~R. Pintos, {A numerical method for European Option
  Pricing with transaction costs nonlinear equation}, Mathematical and computer
  modelling 50~(5-6) (2009) 910--920.

\bibitem{cen2011robust}
Z.~Cen, A.~Le, A robust and accurate finite difference method for a generalized
  {Black--Scholes} equation, Journal of Computational and Applied Mathematics
  235~(13) (2011) 3728--3733.

\bibitem{carr2003finite}
P.~Carr, L.~Wu, The finite moment log stable process and option pricing, The
  Journal of Finance 58~(2) (2003) 753--777.

\bibitem{wyss2000fractional}
Wyss, Walter, {The fractional Black-Scholes equation}, Fractional Calculus and
  Applied Analysis 1 (2000) 51--61.

\bibitem{cartea2007fractional}
A.~Cartea, D.~Del-Castillo-Negrete, Fractional diffusion models of option
  prices in markets with jumps, Physica A: Statistical Mechanics and its
  Applications 374~(2) (2007) 749--763.

\bibitem{jumarie2008stock}
G.~Jumarie, {Stock exchange fractional dynamics defined as fractional
  exponential growth driven by (usual) Gaussian white noise. Application to
  fractional Black--Scholes equations}, Insurance: Mathematics and Economics
  42~(1) (2008) 271--287.

\bibitem{jumarie2010derivation}
G.~Jumarie, {Derivation and solutions of some fractional Black--Scholes
  equations in coarse-grained space and time. Application to Merton’s optimal
  portfolio}, Computers \& Mathematics with Applications 59~(3) (2010)
  1142--1164.

\bibitem{liang2010option}
J.~R. Liang, J.~Wang, W.~J. Zhang, W.~Y. Qiu, F.~Y. Ren, {Option pricing of a
  bi-fractional Black--Merton--Scholes model with the Hurst exponent H in [12,
  1]}, Applied Mathematics Letters 23~(8) (2010) 859--863.

\bibitem{chen2015analytically}
W.~Chen, X.~Xu, S.-P. Zhu, {Analytically pricing double barrier options based
  on a time-fractional Black--Scholes equation}, Computers \& Mathematics with
  Applications 69~(12) (2015) 1407--1419.

\bibitem{prathumwan2020solution}
D.~Prathumwan, K.~Trachoo, {On the solution of two-dimensional fractional
  Black--Scholes equation for European put option}, Advances in Difference
  Equations 2020~(1) (2020) 1--9.

\bibitem{ghandehari2014european}
M.~Ghandehari, M.~Ranjbar, {European option pricing of fractional version of
  the Black-Scholes model: Approach via expansion in series}, International
  Journal of Nonlinear Science 17~(2) (2014) 105--110.

\bibitem{edeki2017analytical}
S.~O. Edeki, O.~O. Ugbebor, E.~A. Owoloko, {Analytical solution of the
  time-fractional order Black-Scholes model for stock option valuation on no
  dividend yield basis}, IAENG International Journal of Applied Mathematics
  47~(4) (2017) 1--10.

\bibitem{fall2019black}
A.~N. Fall, S.~N. Ndiaye, N.~Sene, {Black--Scholes option pricing equations
  described by the Caputo generalized fractional derivative}, Chaos, Solitons
  \& Fractals 125 (2019) 108--118.

\bibitem{zhang2016numerical}
H.~Zhang, F.~Liu, I.~Turner, Q.~Yang, {Numerical solution of the
  time-fractional Black--Scholes model governing European options}, Computers
  \& Mathematics with Applications 71~(9) (2016) 1772--1783.

\bibitem{de2017numerically}
R.~H. De~Staelen, A.~S. Hendy, {Numerically pricing double barrier options in a
  time-fractional Black--Scholes model}, Computers \& Mathematics with
  Applications 74~(6) (2017) 1166--1175.

\bibitem{song2013solution}
L.~Song, W.~Wang, {Solution of the fractional Black-Scholes option pricing
  model by finite difference method}, Abstract and Applied Analysis 2013
  (2013).

\bibitem{koleva2017numerical}
M.~N. Koleva, L.~G. Vulkov, {Numerical solution of time-fractional
  Black--Scholes equation}, Computational and Applied Mathematics 36~(4) (2017)
  1699--1715.

\bibitem{golbabai2017new}
A.~Golbabai, E.~Mohebianfar, A new stable local radial basis function approach
  for option pricing, Computational Economics 49~(2) (2017) 271--288.

\bibitem{hariharan2013efficient}
G.~Hariharan, S.~Padma, P.~Pirabaharan, {An efficient wavelet based
  approximation method to time-fractional Black--Scholes European option
  pricing problem arising in financial market}, Applied Mathematical Sciences
  7~(69) (2013) 3445--3456.

\bibitem{mesgarani2021impact}
H.~Mesgarani, A.~Beiranvand, Y.~E. Aghdam, {The impact of the Chebyshev
  collocation method on solutions of the time-fractional Black--Scholes},
  Mathematical Sciences 15~(2) (2021) 137--143.

\bibitem{an2021space}
X.~An, F.~Liu, M.~Zheng, V.~V. Anh, I.~W. Turner, {A space-time spectral method
  for time-fractional Black-Scholes equation}, Applied Numerical Mathematics
  165 (2021) 152--166.

\bibitem{akram2021efficient}
T.~Akram, M.~Abbas, K.~M. Abualnaja, A.~Iqbal, A.~Majeed, {An efficient
  numerical technique based on the extended cubic B-spline functions for
  solving time fractional Black--Scholes model}, Engineering with Computers
  (2021) 1--12.

\bibitem{GuptaKadalbajoo2016}
V.~Gupta, M.~K. Kadalbajoo, Qualitative analysis and numerical solution of
  burgers’ equation via b-spline collocation with implicit euler method on
  piecewise uniform mesh, Journal of Numerical Mathematics 24~(2) (2016)
  73--94.

\bibitem{kadalvg2010}
M.~K. Kadalbajoo, V.~Gupta, A parameter uniform b-spline collocation method for
  solving singularly perturbed turning point problem having twin boundary
  layers, International Journal of Computer Mathematics 87~(14) (2010)
  3218--3235.

\bibitem{KADALBAJOO2009439}
M.~K. Kadalbajoo, V.~Gupta, Numerical solution of singularly perturbed
  convection–diffusion problem using parameter uniform b-spline collocation
  method, Journal of Mathematical Analysis and Applications 355~(1) (2009)
  439--452.

\bibitem{KADALBAJOO2008271}
M.~K. Kadalbajoo, V.~Gupta, A.~Awasthi, A uniformly convergent b-spline
  collocation method on a nonuniform mesh for singularly perturbed
  one-dimensional time-dependent linear convection–diffusion problem, Journal
  of Computational and Applied Mathematics 220~(1) (2008) 271--289.

\bibitem{pruess1979alternatives}
S.~Pruess, Alternatives to the exponential spline in tension, Mathematics of
  Computation 33~(148) (1979) 1273--1281.

\bibitem{pruess1976properties}
S.~Pruess, Properties of splines in tension, Journal of Approximation Theory
  17~(1) (1976) 86--96.

\bibitem{de1978practical}
C.~De~Boor, C.~De~Boor, A practical guide to splines, Vol.~27, Springer-Verlag,
  New York, 1978.

\bibitem{mccartin1991theory}
B.~J. McCartin, Theory of exponential splines, Journal of Approximation Theory
  66~(1) (1991) 1--23.

\bibitem{rao2008exponential}
S.~C.~S. Rao, M.~Kumar, Exponential b-spline collocation method for
  self-adjoint singularly perturbed boundary value problems, Applied Numerical
  Mathematics 58~(10) (2008) 1572--1581.

\bibitem{zhu2017exponential}
X.~Zhu, Y.~Nie, Z.~Yuan, J.~Wang, Z.~Yang, An exponential {B}-spline
  collocation method for the fractional sub-diffusion equation, Advances in
  Difference Equations 2017~(1) (2017) 1--17.

\bibitem{asvcomputational}
A.~S.~V. Ravi~Kanth, N.~Garg, A computational procedure and analysis for
  multi-term time-fractional burgers-type equation, Mathematical Methods in the
  Applied Sciences n/a~(n/a) (2022).
\newblock \href {https://doi.org/https://doi.org/10.1002/mma.8299}
  {\path{doi:https://doi.org/10.1002/mma.8299}}.

\bibitem{ravi2021unconditionally}
A.~S.~V. Ravi~Kanth, N.~Garg, An unconditionally stable algorithm for multiterm
  time fractional advection--diffusion equation with variable coefficients and
  convergence analysis, Numerical Methods for Partial Differential Equations
  37~(3) (2021) 1928--1945.

\bibitem{podlubny1998fractional}
I.~Podlubny, Fractional differential equations: an introduction to fractional
  derivatives, fractional differential equations, to methods of their solution
  and some of their applications, Elsevier, 1998.

\bibitem{li2019numerical}
C.~Li, F.~Zeng, Numerical methods for fractional calculus, Chapman and
  Hall/CRC, 2019.

\bibitem{mohyud2018fully}
S.~T. Mohyud~Din, T.~Akram, M.~Abbas, A.~I. Ismail, N.~H. Ali, {A fully
  implicit finite difference scheme based on extended cubic B-splines for time
  fractional advection--diffusion equation}, Advances in Difference Equations
  2018~(1) (2018) 1--17.

\bibitem{hang2016numerical1}
H.~M. Zhang, F.~W. Liu, I.~Turner, S.~Chen, {The numerical simulation of the
  tempered fractional Black--Scholes equation for European double barrier
  option}, Applied Mathematical Modelling 40~(11-12) (2016) 5819--5834.

\end{thebibliography}

\end{document}